\newcommand{\br}{\mathbb{R}}
\newcommand{\bn}{\mathbb{N}}
\newcommand{\bt}{\mathbb{T}}
\newtheorem{thm}{Theorem}[section]
\newtheorem{lem}[thm]{Lemma}
\newtheorem{prop}[thm]{Proposition}
\newtheorem{rem}[thm]{Remark}
\numberwithin{equation}{section}
\newcommand\reallywidehat[1]{%
\savestack{\tmpbox}{\stretchto{%
  \scaleto{%
    \scalerel*[\widthof{\ensuremath{#1}}]{\kern-.6pt\bigwedge\kern-.6pt}%
    {\rule[-\textheight/2]{1ex}{\textheight}}
  }{\textheight}%
}{0.5ex}}%
\stackon[1pt]{#1}{\tmpbox}%
}
\title{Landau damping on the torus for the Vlasov-Poisson system\\ with massless electrons}
\author{
Antoine Gagnebin
  \thanks{ETH Z\"urich. Email: \textsf{antoine.gagnebin@math.ethz.ch}}
  \and
Mikaela Iacobelli
  \thanks{ETH Z\"urich. Email: \textsf{mikaela.iacobelli@math.ethz.ch}}
}
\date{\today}
\begin{document}

\maketitle

\begin{abstract}This paper studies the nonlinear Landau damping on the torus $\mathbb{T}^d$ for the Vlasov-Poisson system with massless electrons (VPME). We consider solutions with analytic or Gevrey ($\gamma > 1/3$) initial data, close to a homogeneous equilibrium satisfying a Penrose stability condition. We show that for such solutions, the corresponding density and force field decay exponentially fast as time goes to infinity. This work extends the results for Vlasov-Poisson on the torus to the case of ions and, more generally, to arbitrary analytic nonlinear couplings.
\end{abstract}

\section{Introduction}
\label{section Introduction}
In this paper, we study the following class of Vlasov-type systems on the torus $\bt^d$:
\begin{eqnarray}
\label{VP}
  \left \{
  \begin{array}{l}
  \partial_t f(t,x,v) + v \cdot \nabla_x f(t,x,v) + E (t,x)\cdot \nabla_v f(t,x,v)= 0, \\
  E(t,x) = - \nabla U(t,x), \qquad -\Delta U(t,x) + \beta U(t,x) + h(U)(t,x) =\rho(t,x)-1,\\
  \rho(t,x) = \int_{\mathbb{R}^d} f(t,x,v) \ dv,\\
  f(0, x, v) = f^0(x,v) \geq 0, \quad \int_{\mathbb{T}^d \times \mathbb{R}^d} f^0(x,v) \ dx \ dv = 1.
  \end{array}
  \right.
\end{eqnarray}
Such systems are a fundamental model of plasma physics and galactic dynamics (see, e.g. \cite{plasma_2003, Galactic_2008, Ryutov_1999, Villani_notes}).
The unknown $f(t,x,v)$ is the distribution function of particles at time $t$, position $x$, and velocity $v$, where $(t,x,v) \in \mathbb{R} \times \mathbb{T}^d\times \mathbb{R}^d$. We denote by $\rho$ the density of particles and $E$ the force field generated by the collective behaviour of the particles. Here, $h$ is an analytic function with some properties we will define below, while $\beta$ is a non-negative constant. 

With this general definition of the coupling, one could study several models for plasma physics and galactic dynamics as well as describe different mean-field type systems depending on the choice of $\beta$ and $h$.

Concerning plasma physics, a classical model for a non-relativistic plasma in the electrostatic regime is the Vlasov-Poisson system (VP), which corresponds to the case $\beta=h= 0$.
It describes the motion of electrons in plasma (i.e., $f(t,x,v)$ is the distribution function for the electrons) when we neglect collisions and consider the ions as a stationary background. 
Since (VP) is the most classical model for plasmas, vast literature proves the existence of global classical and weak solutions with various conditions on the initial data. 
In the whole space, the global existence and uniqueness of classical solutions of the Cauchy problem for the (VP) system was obtained by S. V. Iordanskii \cite{Iordanskii} in dimension one,  S. Ukai and T. Okabe \cite{Ukai_Okabe} in the two-dimensional case and independently by P.-L. Lions and B. Perthame \cite{Lions_Perthame}, and K. Pfaffelmoser \cite{Pfaffelmoser} in the three-dimensional case, (see also \cite{Bardos_Degond_2, Schaffer}). A. A. Arsenev \cite{Arsenev} proved a global existence of weak solutions in dimension three, E. Horst and R. Hunze \cite{Horst_Hunze} improved this result with weaker assumptions on the initial data (see also \cite{Bardos_Degond_1, Bardos_Degond_Golse}).
The adaptation to the periodic case was performed by J.  Batt and G. Rein \cite{Batt_Rein}. Later, C. Pallard \cite{Pallard} and Z. Chen and J. Chen \cite{Chen} improved the moment conditions for the existence. Finally, in \cite{Loeper}, G. Loeper proved uniqueness under the sole assumption of the density being bounded.

The preceding model can be extended to describe the evolution of ions with certain approximations. However, to achieve a more precise analysis of ions behaviour in the plasma, it is common in physics literature to assume that the electrons are close to thermal equilibrium. Indeed, although electron-electron collisions are neglected in the model above because of their rarity, they become relevant in the ions timescale. Therefore, it is reasonable to assume that the distribution of the electrons is the thermal equilibrium of a collisional kinetic model.
The Vlasov-Poisson model for massless electrons (VPME) --sometimes referred to as ionic Vlasov-Poisson--can then be derived asymptotically as the mass ratio between electrons and ions grows small. For more details on the massless limit, we refer to \cite{Bardos}, and for a more thorough introduction of this model, we refer to \cite{Griffin_Iacobelli_summary}.
The (VPME) system consists of a Vlasov equation coupled with a nonlinear Poisson equation modelling how the electric potential is generated by the distribution of the ions and the Maxwell-Boltzmann distribution of the electrons. In our notation, this corresponds to choosing $\beta=1$ and $h(U) = e^U - 1- U.$  

Due to the mathematical difficulties created by this nonlinear coupling, (VPME) has been less studied. However, F. Bouchut \cite{Bouchut} constructed weak solutions globally in time in the whole space in dimension three. In the one-dimensional setting, D. Han-Kwan and M. Iacobelli \cite{Han-Kwan_Iacobelli} proved the existence of global weak solutions for measure data with bounded first moment.
More recently, M. Griffin-Pickering and M. Iacobelli proved the global well-posedness of (VPME) in the torus in dimension two and three \cite{Griffin_Iacobelli_torus}, and in the whole three-dimensional space \cite{Griffin_Iacobelli_R}.  Furthermore, it is important to highlight that the theory for (VPME) models now incorporates stability and uniqueness results that have been developed in the context of quasi neutral limits. See for example \cite[Theorem 1.9]{Han-Kwan_Iacobelli} and \cite[Proposition 4.1]{Griffin_Iacobelli_stability} for stability results. Moreover, L. Cesbron and M. Iacobelli proved the well-posedness of (VPME) in bounded domains \cite{Cesbron_Iacobelli}.
See \cite{Griffin_Iacobelli_summary} for a review of the global well-posedness theory of (VPME). 

Another important model corresponds to selecting $\beta=1$ and $h = 0$, when (\ref{VP}) becomes the screened (VP) system. This system is a linear version of the coupling in the (VPME) system. That is, we have linearised the exponential $e^U$ by $1+U$. From a physical point of view, this approximation is valid as long as the electric energy is small compared to the kinetic energy.  The weak global existence of this system is discussed in \cite[Theorem 2.1]{HKD}. The proof of this result is an adaptation of A. A. Arsenev \cite{Arsenev}. For a more detailed discussion about this system, the interested reader can look into \cite[Section 1.1 and 1.2]{HKD_HDR}. All these results have been developed in the context of quasi neutral limits as well.

At this point, it is worth mentioning the link between the quasi-neutral limit and the large-time dynamics of Vlasov-type systems, which is the aim of this article. To briefly explain, the quasi-neutral limit involves introducing a small parameter $\varepsilon$ into equation (\ref{VP}) and considering the limit as $\varepsilon$ approaches zero. By employing a change of variables $(t,x,v) \mapsto \left( \frac{t}{\varepsilon}, \frac{x}{\varepsilon}, v \right)$, we can transition from the quasi-neutral Vlasov-type system to the Vlasov-type system with $\varepsilon = 1$. Consequently, establishing a uniform estimate over a time interval of size $O(1)$ in the quasi-neutral regime is essentially equivalent, in a formal sense, to proving this estimate over a time interval of size $O(\frac{1}{\varepsilon})$ for the Vlasov-type system. In simpler terms, the quasi-neutral limit can be interpreted as a problem concerning the asymptotic in-time behaviour. 
D. Han-Kwan discussed this aspect in his PhD Thesis \cite{HKD_thesis} and HDR \cite{HKD_HDR}, where he provides more details on this relation. For more information, interested readers can refer to \cite[Section 3.1.2]{HKD_thesis} in his PhD Thesis and \cite[Section 1.3 and Section 1.6]{HKD_HDR} in his HDR.

In the following, we assume $\beta\ge0$ and $h : (-R, R) \rightarrow \br $ to be analytic with analyticity radius $R$ and satisfying $h(x) = \mathcal{O}(x^{2})$ when $x$ tends to zero.  Since, as it will be clear in the following, we will consider small solutions of size $\varepsilon$, for any $h$ with analytic radius $R$, we can always choose $\varepsilon$ small enough with respect to $R$ such that $h(U)$ is well defined. Nevertheless, we note that in the (VPME) case with $h(x) = e^x -1 -x$, we have $R=\infty$. 

This paper studies the large-time behaviour of solutions with analytic or Gevrey initial data and close to a homogeneous equilibrium satisfying a Penrose stability condition. Namely, we look at solutions of the form
\begin{equation*}
	\mu(v) + f(t,x,v),
\end{equation*}
where $\mu(v)$ is a homogeneous equilibrium, and $f$ is assumed to be a small perturbation of $\mu$. 
We assume that the homogeneous equilibrium satisfies the following properties: 
\begin{itemize}
\item[(H1)] $\mu(v)$ is real analytic and for some constants $C>0$, $\theta_0 > 0$ and for all multi-index $j \in \bn^d$ such that $\vert j \vert \leq d$, we have
\begin{equation}
\label{Pmu1}
  \vert \partial_\eta^j \widehat{\mu}(\eta) \vert \leq C e^{-\theta_0 \vert \eta \vert},
\end{equation}
where $\widehat{\mu}$ denotes the Fourier transform of $\mu$.
\item[(H2)] $\mu (v)$ satisfies the following stability condition: there exists a small positive constant $\kappa_0$ such that
\begin{equation}
\label{stability_cond}
  \inf_{k \in \mathbb{Z}^d \setminus \{0\}; \Re \lambda \geq 0}{\left| 1 + \frac{\vert k \vert ^2}{\beta + \vert k \vert^2} \int_0^{\infty}  t\widehat{\mu}(kt) e^{-\lambda t} \ dt \right|} \geq \kappa_0 > 0,
\end{equation}
where $\lambda \in \mathbb{C}$ and $\Re \lambda$ is the real part of $\lambda$.
\item[(H3)] $\int_{\mathbb{R}^d} \mu (v) \ dv = 1.$
\end{itemize}

These three assumptions hold for a large class of equilibria. Indeed, any Maxwellian 
\begin{equation*}
	\mu(v) = e^{-\frac{\vert v \vert^2}{2}}
\end{equation*}
satisfies the assumptions above, or in dimension three or higher, every positive and radially symmetric equilibria also do. 

It is important to remark that the system \eqref{VP} may not be in general well-defined for all choices of $\beta$ and $h$. This is because the nonlinear coupling between $\rho$ and $U$ may not have a unique solution (namely, given $\rho$, there may be more than one function $U$ satisfying the elliptic PDE).
This non-uniqueness issue does not happen in the classical (VP) case ($\beta=h=0$) or for the screened (VP) case ($\beta=1$, $h=0$).
Even more,  for the nonlinear coupling of (VPME) ($\beta=1$, $h(U)=e^U-1-U$), the solution is unique,  as shown in \cite{Han-Kwan_Iacobelli} (see also Proposition 3.5 of \cite{Griffin_Iacobelli_torus}).
This is not a problem in our setting of general nonlinearities in the Poisson equation, provided we look for a small perturbation of equilibria. Indeed, when considering solutions of the form $\mu+f$ with $f$ small in a sufficiently strong norm (in our setting $f$ will be small in a suitable Gevrey norm), it is natural to impose that also $U$ should be small in some strong norm. This means that one is looking for solutions of 
\begin{equation*}
  -\Delta U + \beta U + h(U) = \int_{\mathbb{R}^d} \big(\mu(v) +f(t,x,v)\big)\ dv -1 = \int_{\mathbb{R}^d} f(t,x,v)\ dv,
\end{equation*}
with $ \int_{\mathbb{R}^d} f(t,x,v)\ dv$ small. Then this PDE corresponds to the Euler-Lagrange equation of a functional which is convex in a neighbourhood of $U\equiv0$--recall that $h(U)=\mathcal{O}(U^{2})$ by assumption, cp. \cite[Proposition 3.5]{Griffin_Iacobelli_torus}. Therefore, once we consider small solutions $U$, there is a unique minimum in a neighbourhood of the origin. 
In particular, if $f\equiv 0$ then $U\equiv 0$ is the unique solution.

As already mentioned before, we assume that at time $t=0$, the perturbation $f^0(x,v)$ lies in an analytic or Gevrey space. Since the total mass equals one, we have 
\begin{equation*}
  1 = \int_{\mathbb{T}^d \times \mathbb{R}^d} \big(\mu(v) + f^0(x,v)\big) \ dx \ dv = 1 + \int_{\mathbb{T}^d \times \mathbb{R}^d} f^0(x,v) \ dx \ dv,
\end{equation*}
therefore, we ask that the mass of the initial perturbation is zero. 
These considerations lead to the following perturbed nonlinear equation,
\begin{eqnarray}
\label{nlpvpme}
  \left \{
  \begin{array}{l}
  \partial_t f + v \cdot \nabla_x f + E[f] \cdot \nabla_v (f + \mu) = 0, \\
  E = - \nabla U, \qquad - \Delta U + \beta U + h(U)= \rho, \\
   \rho = \int_{\mathbb{R}^d} f \ dv, \\
   \int_{\mathbb{T}^d \times \mathbb{R}^d} f^0 \ dx \ dv = 0.
  \end{array}
  \right.
\end{eqnarray}
It is classical to start the analysis of the system above by looking at the following model
\begin{eqnarray}
\label{lpvpme}
  \left \{
  \begin{array}{l}
  \partial_t f + v \cdot \nabla_x f + E[f] \cdot \nabla_v \mu = 0, \\
  E = - \nabla U, \qquad -\Delta U +\beta U + h(U)= \rho, \\
   \rho = \int_{\mathbb{R}^d} f \ dv, \\
   \int_{\mathbb{T}^d \times \mathbb{R}^d} f^0 \ dx \ dv = 0.
  \end{array}
  \right.
\end{eqnarray}
This system \eqref{lpvpme} represents the linearized Vlasov equation around the equilibrium state $\mu$, coupled with the nonlinear Poisson equation. This model is established because, by focusing on this simplified version, we can more easily analyse its properties and obtain reliable estimates which can later be used when studying the more complex system described by \eqref{nlpvpme}.

We note that the system \eqref{lpvpme} is the linearised Vlasov equation around the equilibrium $\mu$ coupled with the non linear Poisson equation.  The motivation to define this model is that it's easier to first look at the asymptotic behaviour of this system and obtain good estimates that we can use later for the study on the system \eqref{nlpvpme}.

The goal of this paper is to prove collisionless relaxation, also known as Landau damping, for the nonlinear class of mean-field systems (\ref{nlpvpme}), as done for the classical (VP) system in \cite{CM_CV, BMM_13, GNR}.
We will not delve into a historical discussion about Landau damping (see, for example, \cite{CM_CV} and references therein). Still, to put our result into context, let us mention that in 1946, L. Landau formally studied the linearized (VP) system (i.e., (\ref{lpvpme}) with $\beta=h=0$) around a spatially homogeneous Maxwellian equilibrium on the torus, and he showed that the electric field decays exponentially fast \cite{Landau}. Later, O. Penrose \cite{Penrose} extended the result for general spatially homogeneous equilibria. 
The nonlinear analogue of Landau damping had been a longstanding problem in the theory until, in the celebrated work \cite{CM_CV}, C. Mouhot and C. Villani proved Landau damping for the nonlinear (VP) system on the torus for analytic and Gevrey data with index $\gamma$ close to one. They relied on some suitable families of analytic or Gevrey norms, measuring regularity by comparison with solutions of the free transport equation. Moreover, they employed a sophisticated use of Eulerian and Lagrangian coordinates combined with a global-in-time Newton approximation scheme. They explained the damping phenomenon in terms of transfer of regularity between kinetic and spatial variables and showed that phase mixing is the driving mechanism of relaxation.  

The mixing mechanism behind Landau damping also appears in fluid dynamics. Indeed, in \cite{BM} J. Bedrossian and N. Masmoudi proved asymptotic stability of shear flows close to the planar Couette flow in the $2$D inviscid Euler equations on $\mathbb{T}\times\mathbb{R}$. This long-time stability phenomenon is called inviscid damping and is considered the hydrodynamic analogue of Landau damping.
Later, the argument in \cite{CM_CV}  has then been simplified and extended to a wider class of Gevrey initial data by J. Bedrossian, N. Masmoudi, and C. Mouhot in \cite{BMM_13}. Indeed, the authors combined the ideas from  \cite{CM_CV} and the novel ideas that arose in the study of inviscid damping in $2$D Euler \cite{BM}. In particular, they rely on a paraproduct decomposition and controlled regularity loss to replace the Newton iteration scheme. Recently,  E. Grenier, T. Nguyen, and I. Rodnianski in \cite{GNR} further simplified the proof in \cite{BMM_13}. 

The assumption of analytic or Gevrey initial data is necessary. In \cite{JB}, J. Bedrossian showed that the result of Mouhot and Villani \cite{CM_CV} is false if we only assume Sobolev regularity for the initial condition. However, I. Tristani \cite{IT} showed first that adding a collision operator to the linear (VP) system allows us to consider Sobolev data to see Landau damping.  J. Bedrossian \cite{JB_collision} successively proved Landau damping for Sobolev data when considering collisions of particles with a Fokker-Planck operator for the nonlinear (VP) system. Recently,  S. Chaturvedi, J. Luk, and T. Nguyen \cite{Nguyen_collision} proved a similar result but for the Landau collision operator. These three results concern equilibrium of the form of a global Maxwellian and for a weakly collisional regime.

Since extending the theory to the unconfined case is physically relevant, several other works treated Landau damping on the whole space. First, J. Bedrossian, N. Masmoudi, and C. Mouhot proved Landau damping for Sobolev data in $\br^d$, $d \geq 3$ for the screened (VP) system \cite{BMM_16}. Then their result was improved by D. Han-Kwan, T. Nguyen, and F. Rousset in \cite{HKNR}; and very recently in \cite{huang_sharp_2022}, L. Huang, Q.-H. Nguyen, Y. Xu gave a better decay in time for the density in $\br^d$, $d \geq 3$ for a large class of couplings, including screened (VP) and (VPME). The same authors also treated the two-dimensional case in \cite{huang_2d}. We also mention the result of R. H\"ofer and R. Winter \cite{RHRW}, who consider the screened (VP) system coupled to the motion of a point charge. They showed that Landau damping  occurs in regions where the point charge has already passed. For the general (VP) system,  J. Bedrossian, N. Masmoudi, and C. Mouhot studied the linear Landau damping for homogeneous Maxwellian equilibrium in \cite{BMM_22_linear_VP}.  D.  Han-Kwan, T.  Nguyen, and F.  Rousset treated in  \cite{HKNR_linear_VP} the  linear (VP) equation for general homogeneous equilibria.  Recently,  in \cite{IPWW},  A.  Ionescu,  B.  Pausader and X.  Wang, and K.  Widmayer gave a proof of the nonlinear Landau damping for Poisson equilibrium. It is worth mentioning also a recent result concerning the analogue problem for the relaxation of stellar systems \cite{gravitational_LD}. The authors M. Had\v{z}i\'{c}, G. Rein, M. Schrecker, and C. Straub proved that the Landau damping occurs for solutions of the linearized gravitational (VP) system under certain conditions.

Our result concerns Landau damping for analytic or Gevrey initial data on the torus $\bt^d$ for a large class of couplings, including the (VPME) case.

\subsection{Notation}
\label{subsection Notation}

Let $k \in \mathbb{Z}^d$ and $\eta \in \mathbb{R}^d$, we denote the Fourier coefficient of $\rho(t,x)$ by 
\begin{equation*}
  \widehat{\rho}_k(t) := \widehat{\rho}(t,k) = \int_{\mathbb{T}^d} \rho (t,x) e^{-ik \cdot x} \ dx,
\end{equation*}
and we have the usual formula
\begin{equation}
\label{reconstruction formula 1}
	\rho (t,x) =  \frac{1}{(2 \pi)^d }  \sum_{k \in \mathbb{Z}^d} \widehat{\rho}_k(t) e^{ik \cdot x}.
\end{equation}
We write the Fourier transform of $f(t,x,v)$ by 
\begin{equation*}
  \widehat{f}_{k,\eta}(t) := \widehat{f}(t,k,\eta) = \int_{\mathbb{T}^d \times \mathbb{R}^d} f (t,x,v) e^{-ik \cdot x} e^{-i \eta \cdot v} \ dx \ dv,
\end{equation*}
with the reconstruction formula
\begin{equation}
\label{reconstruction formula 2}
	f(t,x,v) = \frac{1}{(2 \pi)^{2d} } \sum_{k \in \mathbb{Z}^d} \int_{\mathbb{R}^d} \widehat{f}_{k,\eta}(t)  e^{ik \cdot x} e^{i \eta \cdot v} \ d\eta.
\end{equation}
We recall as well the Laplace transform of a function $\phi \in L^2(\mathbb{R}_+)$,
\begin{equation}
	\mathcal{L}[\phi] (\lambda) = \int_0^{\infty} \phi(t) e^{-\lambda t} \ dt.
\end{equation}
It is well-defined for any complex value $\lambda$ with $\Re \lambda > 0$.
The Japanese bracket is written as follows: $\langle k, \eta \rangle = \sqrt{1 + \vert k \vert^2 + \vert \eta \vert^2}$.  We use analytic or Gevrey norms to control the decay of the electric field. Our norms are defined via the so-called “generator functions" introduced in \cite{EG_TN} (see also \cite{BMM_13,GNR}), that measure the analyticity or Gevrey regularity. 

More precisely, following \cite{GNR}, let $z \geq 0$ be the analyticity radius, $ \gamma \in (0, 1]$ the Gevrey index, $j \in \mathbb{N}^d$ a multi-index, $\sigma > 0$ and $\alpha< \frac{1}{2}$ such that $\sigma - \alpha > d$. Then we define the generator functions
\begin{equation}
\label{GenF}
	F[\rho](t,z) := \sup_{k \in \mathbb{Z}^d \setminus \{0\}} e^{z \langle k, kt \rangle^{\gamma}} \vert \widehat{\rho}_k (t) \vert \langle k, kt \rangle^{\sigma} \frac{1}{\vert k \vert^{ \alpha }} ,
\end{equation}
and
\begin{equation}
\label{GenG}
	G[f(t)](z) := \sum_{\vert j \vert \leq d} \sum_{k \in \mathbb{Z}^d} \int_{\mathbb{R}^d} e^{2z\langle k, \eta \rangle^{\gamma} } \vert \partial_{\eta}^j \widehat{f}_{k,\eta} (t) \vert^2 \langle k, \eta \rangle ^{2 \sigma} \ d\eta.
\end{equation}
For simplicity, we define $A_{k,\eta}:=e^{z \langle k, \eta \rangle^{\gamma}}  \langle k, \eta \rangle^{\sigma}$. In this paper, $C$ denotes a generic constant that may change from line to line.

\subsection{Main result}
\label{subsection Main result}
Now we state our main result.
\begin{thm} \ \\
\label{Thm1}
Consider the system \eqref{nlpvpme}, with
$\beta\ge0$ and $h : \br \rightarrow \br $ analytic and satisfying $h(x) = \mathcal{O}(x^{2})$ when $x$ tends to zero.
Let $\mu$ be a homogeneous equilibrium that satisfies the hypotheses (H1)-(H3).
Let $\lambda_1 >0$ and $\gamma \in (\frac{1}{3}, 1]$. Then there exists $\varepsilon > 0$ such that for any initial data $f^0$ of mean zero and satisfying
\begin{equation}
\label{IC}
	G[f^0](\lambda_1) \leq \varepsilon,
\end{equation}
Landau damping occurs for the system (\ref{nlpvpme}). More precisely, the force field $E$ and the density $\rho$ of (\ref{nlpvpme}) go to zero exponentially fast as $t$ goes to $+\infty$ in every $C^k (\bt^d)$ norm, with $k \in \bn$. Moreover, there exists a limit $f_\infty (x,v)$ such that the solution scatters to free transport as follows: given $ \lambda_0 \leq \frac{\lambda_1}{4}$, let  $z \leq \frac{\lambda_0}{2}$ for $\gamma \in (\frac{1}{3}, 1)$ and $z \leq \min \{ \frac{ \lambda_0}{2}, \frac{\theta_0}{2} \}$ for $\gamma = 1$ and where $\theta_0$ is defined in \eqref{Pmu1}. Then
\begin{equation}
\label{scaterring}
	G[f(t,x+vt, v) - f_\infty(x,v)](z) \leq C \varepsilon e^{- 2 (\lambda_0 - z) \langle t \rangle^{\gamma} }.
\end{equation}
Finally, the solution $f(t)$ converges weakly in $L^2 (\bt^d \times \br^d)$ as $t$ goes to $+\infty$ to the spatial average of $f_\infty$,
\begin{equation*}
	 \langle f_\infty (v) \rangle_x = \int_{\bt^d} f_\infty(x,v) \ dx.
\end{equation*}
\end{thm}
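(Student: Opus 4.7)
The plan is to follow the Grenier–Nguyen–Rodnianski strategy \cite{GNR} adapted to the nonlinear Poisson coupling. First, I would pass to scattering coordinates by setting $g(t,x,v) = f(t,x+vt,v)$, so that the equation \eqref{nlpvpme} becomes
\begin{equation*}
\partial_t g + E(t,x+vt)\cdot\nabla_v\mu(v) + E(t,x+vt)\cdot(\nabla_v - t\nabla_x) g = 0,
\end{equation*}
and in Fourier one obtains the identity $\widehat{g}_{k,\eta}(t)=\widehat{f}_{k,\eta+kt}(t)$. Integrating this ODE against $k$ and using $\widehat{\rho}_k(t)=\widehat{g}_{k,-kt}(t)$, I would derive a Volterra-type integral equation of the form
\begin{equation*}
\widehat{\rho}_k(t) = \widehat{f^0}(k,kt) + \int_0^t K_k(t-s)\, \widehat{\rho}_k(s)\, ds + \mathcal{N}_k(t),
\end{equation*}
where $K_k$ is the linear response kernel, equal (after Laplace transform) to $\tfrac{|k|^2}{\beta+|k|^2}\int_0^\infty s\,\widehat{\mu}(ks)e^{-\lambda s}ds$ up to the nonlinear correction coming from $h(U)$, and $\mathcal{N}_k$ collects the genuinely nonlinear contributions from both $E\cdot\nabla_v f$ and from the Taylor expansion of $h(U)$.

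The nonlinear Poisson equation $-\Delta U + \beta U + h(U) = \rho$ needs to be inverted once; since $U$ is a small perturbation, I would write $U = (\beta-\Delta)^{-1}\rho + (\beta-\Delta)^{-1}[-h(U)]$ and iterate. Thanks to the assumption $h(x)=\mathcal{O}(x^2)$ and analyticity of $h$, a standard analytic/Gevrey composition estimate in the generator-function norm $F[\cdot]$ shows that the $h(U)$ contribution is quadratic in $F[\rho]$, hence treatable as a small nonlinear perturbation. In particular the linear response kernel is exactly the one appearing in \eqref{stability_cond}, so hypothesis (H2) gives an $L^1$-in-time pointwise bound $|\widehat{K}_k(\lambda)|\ge \kappa_0$ on $\Re\lambda\ge0$ that is uniform in $k$. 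The heart of the proof is then a bootstrap on the generator function $F[\rho](t,z)$ with a shrinking analyticity radius $z = \lambda_0 + \delta(T-t)/T$ or similar: using (H1) and (H3) together with the Penrose bound, one shows that
\begin{equation*}
F[\rho](t,z) \le C\sqrt{G[f^0](\lambda_1)} + C\,\big(F[\rho]\big)^2 + C\,\big(F[\rho]\big)^3 + \cdots,
\end{equation*}
the quadratic and higher terms coming from plasma echoes and from $h(U)$; smallness of $\varepsilon$ closes the bootstrap and yields $F[\rho](t,z) \lesssim \varepsilon\, e^{-\lambda t^\gamma}$ for some $\lambda>0$, which is the sought exponential decay for $\rho$ and $E$ in every $C^k(\mathbb{T}^d)$.

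Once the density is controlled, propagating the bound on $G[f(t,x+vt,v)](z)$ via the equation for $g$ is a routine energy estimate (differentiate $G$, use the $F[\rho]$ decay to absorb the force-field terms), and gives a Cauchy-in-time statement in the $G$-norm, yielding the limit $f_\infty$ and the scattering bound \eqref{scaterring} on choosing $z\le\lambda_0/2$ (resp. $z\le\min\{\lambda_0/2,\theta_0/2\}$ when $\gamma=1$, where (H1) enters). Weak convergence of $f(t)$ to $\langle f_\infty\rangle_x$ then follows from the scattering estimate by testing against a smooth $\varphi(x,v)$: the contribution from nonzero spatial modes of $f_\infty(x-vt,v)$ averages to zero in $v$ by the non-stationary phase/Riemann–Lebesgue argument, leaving only the spatial mean.

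The main obstacle is closing the nonlinear estimate in the presence of the analytic coupling $h(U)$ and of the top-order velocity derivative in $E\cdot\nabla_v f$, which creates the well-known regularity loss responsible for the Gevrey threshold. Two technical points must be handled carefully: (i) a Gevrey composition lemma showing that if $F[\rho]$ is controlled then so is $F[h(U)]$ with a quadratic bound, which requires tracking how the analyticity radius and the weight $|k|^{-\alpha}$ interact with powers of $U$; and (ii) the bilinear reaction/transport estimate in the generator-function framework, where the structure of the norm (sup-in-$k$ for $F$, $L^2$-in-$\eta$ for $G$) must be used together with the $\langle k,kt\rangle^\gamma$ weight to absorb resonances, with the constraint $\gamma>1/3$ appearing exactly as the threshold making the echo series summable after one absorbs the derivative loss against the shrinking radius $z(t)$.
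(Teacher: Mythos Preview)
Your outline follows the right architecture, but the order of the bootstrap is inverted in a way that creates a genuine gap. You propose to close an inequality of the form
\[
F[\rho](t,z) \le C\sqrt{G[f^0](\lambda_1)} + C\big(F[\rho]\big)^2 + \cdots
\]
purely in terms of $F[\rho]$, and only afterwards propagate $G[g]$. This cannot work as stated: the nonlinear reaction term in the Volterra equation (the echo term) is
\[
\int_0^t \sum_{\ell\neq 0} k(t-s)\cdot \widehat{E}_\ell(s)\,\widehat{g}_{k-\ell,\,kt-\ell s}(s)\,ds,
\]
and $\widehat{g}_{k-\ell,\,kt-\ell s}(s)$ is evaluated at the off-diagonal velocity frequency $\eta=kt-\ell s$, not at $(k-\ell)s$. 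It is therefore \emph{not} controlled by $F[\rho]$; you need the full $G[g(s)]$ norm (which is an $L^2_\eta$ object with Sobolev corrections in $\eta$) to bound it. In the paper (following \cite{GNR}) the bootstrap is coupled: one first proves the differential inequality
\[
\partial_t G[g(t)](z) \le C\,F[\Delta U](t,z)\,G[g(t)]^{1/2}(z) + C(1+t)\,F[\Delta U](t,z)\,\partial_z G[g(t)](z),
\]
then shows that a bound on $F[\Delta U](t,\lambda(t))\le 4C_1\sqrt{\varepsilon}\langle t\rangle^{-\sigma+1}$ yields $G[g(t)](\lambda(t))\le 4C_2\varepsilon$ via a characteristics argument with a shrinking radius $\lambda(t)=\lambda_0+\lambda_0(1+t)^{-\delta}$, and conversely that $G[g]\le 4C_2\varepsilon$ feeds back into the Volterra estimate (precisely through the echo term) to improve the constant in the $F[\Delta U]$ bound from $4C_1$ to $2C_1$. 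The two estimates close only together.

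A second, smaller point: the paper does not bootstrap on $F[\rho]$ but on $F[\Delta U]$, and derives a Volterra equation directly for $\widehat{U}_k$ (with source $\widehat{T}_k$ containing $\widehat{f^0}$, $\widehat{h(U)}_k$, and the reaction term). This is more than cosmetic: because $E$ is not a Fourier multiplier of $\rho$ here, working with $U$ avoids having to repeatedly invert the nonlinear Poisson equation inside the time-integral estimates. Your iteration $U=(\beta-\Delta)^{-1}\rho + (\beta-\Delta)^{-1}[-h(U)]$ is correct in spirit and is essentially how the paper's Lemma on $F[h(U)]\le\widetilde h(CF[U])$ is used, but you would still need to control $F[\Delta U]$ (not just $F[U]$ or $F[\rho]$) because the force field costs a factor $|k|$ and the differential inequality on $G$ sees $F[\Delta U]$. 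Finally, two sign conventions are off: $\widehat{\rho}_k(t)=\widehat{g}_{k,kt}(t)$ (not $-kt$) and $\widehat{g}_{k,\eta}(t)=\widehat{f}_{k,\eta-kt}(t)$; these are harmless but worth fixing.
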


Our proof is inspired by the argument of E.  Grenier, T. Nguyen, and I. Rodnianski in \cite{GNR} for the classical (VP) system.
Here, the additional difficulty comes from the nonlinear term $h(U)$ in the Poisson equation. Indeed, the argument in  \cite{GNR} crucially relies on the linear relation between $U$ and $\rho$ that now fails. To emphasise this additional difficulty, we also note that in the (VPME) system, we lost the property that the electric field is given by a Fourier multiplier operator applied to the density $\rho$. More precisely, we define $X$ the space of analytic or Gevrey functions and the operator $\mathcal{T}$ as
\begin{align*}
	\mathcal{T} : \ X & \longrightarrow X \\
	 \rho & \longmapsto \mathcal{T}\rho = E.
\end{align*}
Then, using the Poisson equation of (VP), the Fourier transform of the operator $\mathcal{T}$ can be written with a multiplier as 
\begin{equation*}
	\reallywidehat{\mathcal{T}\rho} (k) = m(k) \widehat{\rho}_k = \frac{-ik}{\vert k \vert^2} \widehat{\rho}_k.
\end{equation*}
Similarly, for the screened (VP), the Fourier transform of the operator $\mathcal{T}$ is given by
\begin{equation*}
	\reallywidehat{\mathcal{T}\rho} (k) =  \frac{-ik}{1+\vert k \vert^2} \widehat{\rho}_k.
\end{equation*}
Due to the nonlinear term $h(U)$ in the Poisson coupling for (VPME), it is no longer possible to write the electric field in this way. Since we work on the Fourier side, this fact adds essential modifications to Landau damping analysis.

The paper is structured as follows: in the next section, we give some technical results needed for the rest of the paper. Then, in Section \ref{section Linear Landau damping}, we prove the linear Landau damping for the system (\ref{lpvpme}). We also show useful estimates on the linear system that we will use later for the nonlinear part. Finally, Section \ref{section Nonlinear Landau damping} is devoted to the study of nonlinear Landau damping and contains the proof of Theorem \ref{Thm1}.  Although our arguments are not dimension-dependent, we write the proof in the physical case $d=3$.

\section{Technical Lemmas}
\label{section Technical Lemmas}
In this section we collect some technical results that will be useful in the sequel.

\begin{lem} Let $F[\cdot]$ be defined as in (\ref{GenF}) with $\sigma > 3$ and $\alpha< \frac{1}{2}$ and such that $\sigma - \alpha > 3$. Then there exists a constant $C>0$ such that for any functions $\phi$ and $\psi$ with $ \widehat{\phi}_0 =  \widehat{\psi}_0 = 0$ there holds
\label{property_F}
\begin{equation*}
	F[\phi \psi](t,z) \leq C F[\phi] (t,z) F[\psi] (t,z).
\end{equation*}
\end{lem}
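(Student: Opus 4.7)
The plan is a standard convolution-in-Fourier product estimate. Since $\widehat{\phi\psi}_k = (2\pi)^{-d}\sum_{\ell\in\mathbb{Z}^d}\widehat{\phi}_{k-\ell}\widehat{\psi}_\ell$ and both $\widehat{\phi}_0$ and $\widehat{\psi}_0$ vanish, only indices $\ell\in\mathbb{Z}^d\setminus\{0,k\}$ contribute, and in particular $|\ell|,|k-\ell|\geq 1$. I would bound each $|\widehat\phi_{k-\ell}|$ and $|\widehat\psi_\ell|$ using the definition of $F[\phi]$ and $F[\psi]$, factor these two quantities out of the sum, and reduce the claim to proving a uniform bound (in $k\in\mathbb{Z}^d\setminus\{0\}$ and $t\geq 0$) on the convolution kernel
\[
  K(k,\ell,t)=\frac{e^{z\langle k,kt\rangle^\gamma}}{e^{z\langle k-\ell,(k-\ell)t\rangle^\gamma}\,e^{z\langle\ell,\ell t\rangle^\gamma}}\cdot\frac{\langle k,kt\rangle^\sigma}{\langle k-\ell,(k-\ell)t\rangle^\sigma\langle\ell,\ell t\rangle^\sigma}\cdot\frac{|k-\ell|^\alpha|\ell|^\alpha}{|k|^\alpha}.
\]

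Two elementary facts handle the weights. First, viewing $(k,kt)=(k-\ell,(k-\ell)t)+(\ell,\ell t)$ as a vector sum in $\mathbb{R}^{2d}$, a direct calculation yields the triangle inequality $\langle k,kt\rangle\leq\langle k-\ell,(k-\ell)t\rangle+\langle\ell,\ell t\rangle$, and the subadditivity of $s\mapsto s^\gamma$ on $[0,\infty)$ for $\gamma\in(0,1]$ promotes this to $\langle k,kt\rangle^\gamma\leq\langle k-\ell,(k-\ell)t\rangle^\gamma+\langle\ell,\ell t\rangle^\gamma$, so the exponential factor in $K$ is $\leq 1$. Second, one checks the sharp equivalence $|k|\langle t\rangle\leq\langle k,kt\rangle\leq\sqrt{2}\,|k|\langle t\rangle$ for every $k\in\mathbb{Z}^d\setminus\{0\}$, and analogously for $k-\ell$ and $\ell$. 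Applying the upper bound in the numerator and the lower bounds in the denominator, the polynomial piece of $K$ collapses to
\[
  \frac{\langle k,kt\rangle^\sigma}{\langle k-\ell,(k-\ell)t\rangle^\sigma\langle\ell,\ell t\rangle^\sigma}\cdot\frac{|k-\ell|^\alpha|\ell|^\alpha}{|k|^\alpha}\ \leq\ C\,\frac{|k|^{\sigma-\alpha}}{|k-\ell|^{\sigma-\alpha}|\ell|^{\sigma-\alpha}\langle t\rangle^\sigma},
\]
and one even gains a spare $\langle t\rangle^{-\sigma}$ in time.

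The main obstacle is then the uniform summability of the residual kernel. The convexity inequality $(a+b)^p\leq 2^{p-1}(a^p+b^p)$, legitimate because $p=\sigma-\alpha>d\geq 1$, combined with $|k|\leq|k-\ell|+|\ell|$, gives
\[
  \frac{|k|^{\sigma-\alpha}}{|k-\ell|^{\sigma-\alpha}|\ell|^{\sigma-\alpha}}\ \leq\ C\left(\frac{1}{|\ell|^{\sigma-\alpha}}+\frac{1}{|k-\ell|^{\sigma-\alpha}}\right).
\]
Summing over $\ell\in\mathbb{Z}^d\setminus\{0,k\}$ (with the change of variables $m=k-\ell$ in the second term) reduces the problem to the single convergent series $\sum_{m\in\mathbb{Z}^d\setminus\{0\}}|m|^{-(\sigma-\alpha)}<\infty$, whose finiteness is precisely the content of the hypothesis $\sigma-\alpha>d$. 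Taking the supremum over $k$ and absorbing all constants into $C$ yields the stated product estimate. The delicate point I want to flag is that this absorption of $|k|^{-\alpha}$ into $\langle k,kt\rangle^{\sigma}$ via the equivalence $\langle k,kt\rangle\asymp|k|\langle t\rangle$ is essential: a cruder treatment (e.g. bounding $|k|^{-\alpha}\leq 1$ and splitting the sum according to whether $|\ell|\lessgtr|k|/2$) would produce sums of the form $\sum|\ell|^{2\alpha}/\langle\ell\rangle^\sigma$ and would demand the strictly stronger condition $\sigma-2\alpha>d$, which is not among the hypotheses.
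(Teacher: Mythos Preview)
Your proof is correct, and in fact cleaner than the paper's. Both arguments begin identically---reduce to a kernel sum and dispose of the exponential weight via the triangle inequality for $\langle\cdot,\cdot\rangle$ plus subadditivity of $s\mapsto s^\gamma$---but diverge in how the polynomial factor is handled. The paper first establishes the pointwise bound $\frac{|\ell|^\alpha|k-\ell|^\alpha}{|k|^\alpha}\leq C\frac{\langle\ell,\ell t\rangle^\alpha\langle k-\ell,(k-\ell)t\rangle^\alpha}{\langle k,kt\rangle^\alpha}$ by a two-case analysis ($t>1$ versus $t\leq 1$), and then controls $\sum_\ell\frac{\langle k,kt\rangle^{\sigma-\alpha}}{\langle\ell,\ell t\rangle^{\sigma-\alpha}\langle k-\ell,(k-\ell)t\rangle^{\sigma-\alpha}}$ via a three-region paraproduct-style decomposition according to the relative sizes of $|\ell|$ and $|k-\ell|$. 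Your observation that $\langle k,kt\rangle\asymp|k|\langle t\rangle$ for $k\neq 0$ (and likewise for $\ell,\,k-\ell$, both nonzero by the mean-zero hypothesis) immediately factors out the time variable, reducing everything to the purely spatial quantity $|k|^{\sigma-\alpha}/(|k-\ell|^{\sigma-\alpha}|\ell|^{\sigma-\alpha})$, which you dispatch in one line via the convexity inequality $(a+b)^p\leq 2^{p-1}(a^p+b^p)$. This buys you a shorter argument, avoids the case split and the region decomposition entirely, and as a bonus yields the spare decay factor $\langle t\rangle^{-\sigma}$ that the paper's route does not exhibit (though neither proof needs it). The paper's decomposition is the more standard Littlewood--Paley-flavoured template and generalises more mechanically to weights not of the form $\langle k,kt\rangle$, but for the specific weight here your route is the more economical one.
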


\begin{proof}
Let $\phi$ and $\psi$ be two functions and recall that $A_{k,\eta}=e^{z \langle k, \eta \rangle^{\gamma}}   \langle k, \eta \rangle^{\sigma}$, then we have
\begin{align*}
	F[\phi \psi](t,z) &= \sup_{k \in \mathbb{Z}^3 \setminus \{0\}} A_{k, kt} \vert \widehat{(\phi \psi)}_k (t) \vert \frac{1}{\vert k \vert^{ \alpha }} =  \sup_{k \in \mathbb{Z}^3 \setminus \{0\}} A_{k, kt} \frac{1}{\vert k \vert^{ \alpha }}  \Biggl| \sum_{ \substack{ \ell \in \mathbb{Z}^3 \setminus \{0\} \\ k \neq \ell }} \widehat{\phi}_l (t) \widehat{\psi}_{k - \ell} (t)  \Biggr|  \\
	  & =  \sup_{k \in \mathbb{Z}^3 \setminus \{0\}} A_{k, kt} \frac{1}{\vert k \vert^{ \alpha }}  \Biggl|  \sum_{ \substack{ \ell \in \mathbb{Z}^3 \setminus \{0\} \\ k \neq \ell }} \frac{A_{ \ell,  \ell t}}{\vert \ell \vert^\alpha}  \widehat{\phi}_l (t) \frac{A_{k - \ell, (k - \ell)t}}{\vert k - \ell \vert^\alpha}   \widehat{\psi}_{k - \ell} (t) A_{ \ell,  \ell t}^{-1}  A_{k - \ell, (k - \ell)t}^{-1}  \vert \ell \vert^\alpha \vert k - \ell \vert^\alpha  \Biggr|.\end{align*}
We then bound the right-hand side above by
\begin{multline*}
	  \left( \sup_{\ell \in \mathbb{Z}^3 \setminus \{0\}}  \frac{A_{ \ell,  \ell t}}{\vert \ell \vert^\alpha}   \vert \widehat{\phi}_l (t) \vert \right) \sup_{k \in \mathbb{Z}^3 \setminus \{0\}} A_{k, kt} \frac{1}{\vert k \vert^{ \alpha }}  \sum_{ \substack{ \ell \in \mathbb{Z}^3 \setminus \{0\} \\ k \neq \ell }} \frac{A_{k - \ell, (k - \ell)t}}{\vert k - \ell \vert^\alpha}  \vert  \widehat{\psi}_{k - \ell} (t) \vert A_{ \ell,  \ell t}^{-1}  A_{k - \ell, (k - \ell)t}^{-1}  \vert \ell \vert^\alpha \vert k - \ell \vert^\alpha \\
	   \leq  \left( \sup_{\ell \in \mathbb{Z}^3 \setminus \{0\}}  \frac{A_{ \ell,  \ell t}}{\vert \ell \vert^\alpha}   \vert \widehat{\phi}_l (t) \vert \right) \left( \sup_{\ell \in \mathbb{Z}^3 \setminus \{0\}}  \frac{A_{\ell , \ell t}}{\vert \ell \vert^\alpha}   \vert \widehat{\psi}_\ell (t) \vert \right) \\
	   \qquad \qquad \times \Biggl( \sup_{k \in \mathbb{Z}^3 \setminus \{0\}} A_{k, kt} \frac{1}{\vert k \vert^{ \alpha }}   \sum_{ \substack{ \ell \in \mathbb{Z}^3 \setminus \{0\} \\ k \neq \ell }}  A_{ \ell,  \ell t}^{-1}  A_{k - \ell, (k - \ell)t}^{-1}  \vert \ell \vert^\alpha \vert k - \ell \vert^\alpha \Biggr).
\end{multline*}
Note that, if we can prove that
\begin{equation}
\label{Claim_property_F}
 \sum_{ \substack{ \ell \in \mathbb{Z}^3 \setminus \{0\} \\ k \neq \ell }} A_{ \ell,  \ell t}^{-1}  A_{k - \ell, (k - \ell)t}^{-1}  \vert \ell \vert^\alpha \vert k - \ell \vert^\alpha \leq C A_{k,kt }^{-1}  \vert k \vert^\alpha,
\end{equation}
we can conclude as follows:
\begin{align*}
	F[\phi \psi](t,z) & \leq  \left( \sup_{\ell \in \mathbb{Z}^3 \setminus \{0\}}  \frac{A_{ \ell,  \ell t}}{\vert \ell \vert^\alpha}   \vert \widehat{\phi}_l (t) \vert \right)\left( \sup_{\ell \in \mathbb{Z}^3 \setminus \{0\}}  \frac{A_{\ell , \ell t}}{\vert \ell \vert^\alpha}   \vert \widehat{\psi}_\ell (t) \vert \right)  \sup_{k \in \mathbb{Z}^3 \setminus \{0\}} C A_{k, kt} \frac{1}{\vert k \vert^{ \alpha }}  A_{k,kt }^{-1}  \vert k \vert^\alpha \\
	&\leq C \left( \sup_{\ell \in \mathbb{Z}^3 \setminus \{0\}}  \frac{A_{ \ell,  \ell t}}{\vert \ell \vert^\alpha}   \vert \widehat{\phi}_l (t) \vert \right) \left( \sup_{\ell \in \mathbb{Z}^3 \setminus \{0\}}  \frac{A_{\ell , \ell t}}{\vert \ell \vert^\alpha}   \vert \widehat{\psi}_\ell (t) \vert \right) = C F[\phi] (t,z) F[\psi] (t,z).
\end{align*}
Let us now prove (\ref{Claim_property_F}). We want to show that
\begin{equation*}
\sum_{ \substack{ \ell \in \mathbb{Z}^3 \setminus \{0\} \\ k \neq \ell }} A_{k,kt } A_{ \ell,  \ell t}^{-1}  A_{k - \ell, (k - \ell)t}^{-1} \frac{\vert \ell \vert^{\alpha} \vert k - \ell \vert^{\alpha}}{\vert k \vert^{\alpha}} \leq C.
\end{equation*}
First we note that $e^{z \langle k, kt \rangle^{\gamma}} \leq e^{z \langle \ell,  \ell t \rangle^{\gamma}} e^{z \langle k - \ell, (k - \ell)t \rangle^{\gamma}}$, where we have used the following inequality for the Japanese brackets, $\langle k, kt \rangle  \leq \langle \ell,  \ell t \rangle + \langle k - \ell, (k - \ell)t \rangle$ and the sub-additivity of the map $x \rightarrow x^\gamma$.  Also, we claim that for $k \neq 0$ and $\ell \neq 0$, there exists a constant $C$ such that
\begin{align}
\label{inequaltiy frac}
	\frac{\vert \ell \vert^{\alpha} \vert k - \ell \vert^{ \alpha}}{\vert k \vert^{ \alpha}}  \leq  C \frac{\langle \ell,  \ell t \rangle^{ \alpha} \langle k - \ell, (k - \ell)t \rangle^{ \alpha}}{\langle k,kt \rangle^{ \alpha}}.
\end{align}
Then, for $\sigma - \alpha > 3$ we have
\begin{align*}
  \sum_{ \substack{ \ell \in \mathbb{Z}^3 \setminus \{0\} \\ k \neq \ell }} & \frac{e^{z \langle k, kt \rangle^{\gamma}}}{e^{z \langle \ell,  \ell t \rangle^{\gamma}} e^{z \langle k - \ell, (k - \ell)t \rangle^{\gamma}}} \frac{\langle k, kt \rangle^{\sigma} }{\langle \ell,  \ell t \rangle^{\sigma} \langle k - \ell, (k - \ell)t \rangle^{\sigma} } \frac{\vert \ell \vert^{ \alpha} \vert k - \ell \vert^{ \alpha}}{\vert k \vert^{ \alpha}} \\
& \leq C  \sum_{ \substack{ \ell \in \mathbb{Z}^3 \setminus \{0\} \\ k \neq \ell }} \frac{\langle k, kt \rangle^{\sigma - \alpha} }{\langle \ell,  \ell t \rangle^{\sigma - \alpha} \langle k - \ell, (k - \ell)t \rangle^{\sigma - \alpha} } .
\end{align*}
We now split the sum into three regions.  
\begin{itemize}
\item $\mathcal{A}= \{ \ell \in \mathbb{Z}^3 \setminus \{0\} \ \mbox{s.t. } \ \vert k - \ell \vert > 3 \vert \ell \vert$  \},
\item $ \mathcal{B}= \{ \ell \in \mathbb{Z}^3 \setminus \{0\} \ \mbox{s.t. } \ \vert k - \ell \vert < \frac{1}{3} \vert \ell \vert$  \},
\item $\mathcal{C} = \{ \ell \in \mathbb{Z}^3 \setminus \{0\} \ \mbox{s.t. } \ \frac{1}{3} \vert \ell \vert < \vert k - \ell \vert < 3 \vert \ell \vert$  \}.
\end{itemize}
In  region $\mathcal{A}$, we have $\vert k \vert > 2 \vert \ell \vert$, therefore $\vert k - \ell \vert > \frac{1}{2} \vert k \vert.$ In  region $\mathcal{B}$, we have $ \frac{2}{3} \vert \ell \vert <  \vert k \vert < \frac{4}{3} \vert \ell \vert$. In  region $\mathcal{C}$, we have $ \frac{1}{3} \vert \ell \vert < \vert k - \ell \vert$ and $\vert k \vert < 4 \vert \ell \vert.$ Therefore, using these inequalities on the different regions and $\sigma - \alpha > 3$, we obtain
\begin{multline*}
\sum_{\mathcal{A}}  \frac{\langle k, kt \rangle^{\sigma - \alpha} }{\langle \ell,  \ell t \rangle^{\sigma - \alpha} \langle k - \ell, (k - \ell)t \rangle^{\sigma - \alpha} } + \sum_{\mathcal{B}} \frac{\langle k, kt \rangle^{\sigma - \alpha} }{\langle \ell,  \ell t \rangle^{\sigma - \alpha} \langle k - \ell, (k - \ell)t \rangle^{\sigma - \alpha} } \\
+ \sum_{\mathcal{C}} \frac{\langle k, kt \rangle^{\sigma - \alpha} }{\langle \ell,  \ell t \rangle^{\sigma - \alpha} \langle k - \ell, (k - \ell)t \rangle^{\sigma - \alpha} }\\
 \leq C \Bigg(\sum_{\mathcal{A}} \frac{\langle k, kt \rangle^{\sigma - \alpha} }{\langle \ell,  \ell t \rangle^{\sigma - \alpha}  \langle k, kt \rangle^{\sigma - \alpha} } + \sum_{\mathcal{B}}  \frac{ \langle \ell,  \ell t \rangle^{\sigma - \alpha} }{\langle \ell,  \ell t \rangle^{\sigma - \alpha} \langle k - \ell, (k - \ell)t \rangle^{\sigma - \alpha} }\\
 + \sum_{\mathcal{C}} \frac{  \langle \ell,  \ell t \rangle^{\sigma - \alpha} }{\langle \ell,  \ell t \rangle^{\sigma - \alpha}  \langle \ell,  \ell t \rangle^{\sigma - \alpha} }\Bigg) \leq C.
\end{multline*}
Finally, let us prove (\ref{inequaltiy frac}).  We want to show 
\begin{align*}
	\frac{\vert \ell \vert \vert k - \ell \vert \langle k, kt \rangle}{\vert k \vert \langle \ell,  \ell t \rangle \langle k - \ell, (k - \ell)t \rangle} \leq C.
\end{align*}
We split the analysis in two different cases. 
\begin{itemize}
\item $t>1$: in this case we have 
\begin{equation*}
	\langle k, kt \rangle = (1 + \vert k \vert^2 + \vert kt \vert^2)^{\frac{1}{2}} \leq C \vert kt \vert.
\end{equation*}
Therefore,
\begin{align*}
	\frac{\vert \ell \vert \vert k - \ell \vert \langle k, kt \rangle}{\vert k \vert \langle \ell,  \ell t \rangle \langle k - \ell, (k - \ell)t \rangle} \leq C \frac{\vert \ell \vert \vert k - \ell \vert \vert kt \vert}{\vert k \vert \vert \ell t \vert \vert (k - \ell)t \vert} \leq C \frac{1}{t} \leq C,
\end{align*}
where we used $\vert \ell t \vert < \langle \ell,  \ell t \rangle$ and $\vert (k - \ell)t \vert < \langle k - \ell, (k - \ell)t \rangle$.
\item $t \leq 1$: in this case we have 
\begin{equation*}
	\langle k, kt \rangle = (1 + \vert k \vert^2 + \vert kt \vert^2)^{\frac{1}{2}} \leq C \vert k \vert.
\end{equation*}
Therefore,
\begin{align*}
	\frac{\vert \ell \vert \vert k - \ell \vert \langle k, kt \rangle}{\vert k \vert \langle \ell,  \ell t \rangle \langle k - \ell, (k - \ell)t \rangle} \leq C \frac{\vert \ell \vert \vert k - \ell \vert \vert k \vert}{\vert k \vert \vert \ell  \vert \vert k - \ell \vert} \leq C,
\end{align*}
where we used $\vert \ell \vert < \langle \ell,  \ell t \rangle$ and $\vert k - \ell \vert < \langle k - \ell, (k - \ell)t \rangle$.
\end{itemize}
Hence, we have our claim
\begin{align*}
	\frac{\vert \ell \vert^{\alpha} \vert k - \ell \vert^{ \alpha}}{\vert k \vert^{ \alpha}} \leq C \frac{\langle \ell,  \ell t \rangle^{ \alpha} \langle k - \ell, (k - \ell)t \rangle^{ \alpha}}{\langle k, kt \rangle^{ \alpha}}.
\end{align*}
\end{proof}

\begin{lem}
\label{property_F_2}
Let $F[\cdot]$ be the generator function defined by (\ref{GenF}) and $h$ be an analytic function with analyticity radius $R$,
\begin{equation*}
	h(z) = \sum_{n \geq 0} a_n z^n.
\end{equation*}
Define $\widetilde{h}(z) = \sum_{n \geq 0} \vert a_n \vert z^n$. Let $ \phi $ be a function such that $\left\Vert \phi \right\Vert_{L^\infty (\bt^3)} \leq R$. Then there exists a universal constant $C$ such that
\begin{equation}
	F[h(\phi)](t,z) \leq  \widetilde{h}\Big(C F[\phi](t,z)\Big).
\end{equation}

\end{lem}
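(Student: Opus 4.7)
The approach is to expand $h$ as a power series and estimate each monomial $\phi^n$ by iterating Lemma~\ref{property_F}. Writing $h(z) = \sum_{n \geq 0} a_n z^n$, the constant $a_0$ contributes nothing to Fourier modes with $k \neq 0$, and $F[\cdot]$ is subadditive (since $|\widehat{f+g}_k| \leq |\widehat f_k| + |\widehat g_k|$ and $\sup(a_k + b_k) \leq \sup a_k + \sup b_k$), so
\begin{equation*}
F[h(\phi)](t,z) \;\leq\; \sum_{n \geq 1} |a_n|\, F[\phi^n](t,z).
\end{equation*}
It therefore suffices to prove the monomial estimate $F[\phi^n] \leq (CF[\phi])^n$ with a universal $C$; summing against $|a_n|$ then yields $F[h(\phi)] \leq \widetilde{h}(CF[\phi])$ from the definition of $\widetilde{h}$.

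To prove the monomial bound by induction, one must cope with the fact that Lemma~\ref{property_F} requires \emph{both} factors to have zero mean, whereas $\phi^{n-1}$ generally does not -- even when $\phi$ does. I would first reduce to the zero-mean case: since $F$ is invariant under constant shifts, $F[h(\phi)] = F[\bar h(\phi - \phi_0)]$ where $\bar h(y) := h(\phi_0 + y) - h(\phi_0)$ is still analytic near the origin, and $\phi - \phi_0$ has zero mean (the extra shift $|\phi_0| \leq R$ is absorbed into the final constant). For zero-mean $\phi$, decompose $\phi^{n-1} = \langle \phi^{n-1}\rangle + (\phi^{n-1} - \langle\phi^{n-1}\rangle)$ and write
\begin{equation*}
\phi^n \;=\; \phi\cdot\bigl(\phi^{n-1} - \langle\phi^{n-1}\rangle\bigr) \;+\; \langle\phi^{n-1}\rangle\,\phi.
\end{equation*}
Both factors in the first product have zero mean, so Lemma~\ref{property_F} bounds its $F$-norm by $CF[\phi]\,F[\phi^{n-1}]$; the second term contributes $|\langle\phi^{n-1}\rangle|\,F[\phi] \leq \|\phi\|_\infty^{n-1}\,F[\phi]$.

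To close the induction, the factor $\|\phi\|_\infty^{n-1}$ must be traded for $F[\phi]^{n-1}$. This is the one nontrivial ingredient: a Sobolev-type embedding
\begin{equation*}
\|\phi\|_\infty \;\leq\; \frac{1}{(2\pi)^3}\sum_{k \neq 0} |\widehat\phi_k| \;\leq\; F[\phi]\sum_{k \neq 0} \frac{|k|^\alpha}{\langle k,kt\rangle^\sigma} \;\leq\; C_1\, F[\phi],
\end{equation*}
uniformly in $t$, where convergence of the last series is guaranteed by the standing assumption $\sigma - \alpha > 3$. With this, the recursion $F[\phi^n] \leq CF[\phi]\,F[\phi^{n-1}] + C_1^{n-1}F[\phi]^n$ closes with $F[\phi^n] \leq K^n F[\phi]^n$ for $K = C + C_1 + 1$, completing the proof.

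The main obstacle is precisely the zero-mean mismatch in iterating Lemma~\ref{property_F}. The Sobolev-type embedding $\|\phi\|_\infty \lesssim F[\phi]$, made available by $\sigma - \alpha > 3$, is what prevents the mean contribution from growing faster than geometrically in $n$ and thereby ruining the summation against the coefficients of $\widetilde h$.
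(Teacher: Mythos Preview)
Your approach follows the same outline as the paper's: expand $h$ in a power series, bound $F[\phi^n]$ by iterating Lemma~\ref{property_F}, and sum against $|a_n|$. The paper's proof is a one-liner that simply asserts $F[a_n\phi^n] \leq |a_n|\,C^nF[\phi]^n$ without addressing the zero-mean hypothesis in Lemma~\ref{property_F}. You are right to flag that hypothesis as a genuine obstacle to naive iteration, and your treatment of the mean-zero case --- splitting off $\langle\phi^{n-1}\rangle$, invoking the Sobolev-type bound $\|\phi\|_\infty \lesssim F[\phi]$ (valid for mean-zero $\phi$ since $\sigma - \alpha > 3$), and closing the recursion with $K = C + C_1 + 1$ --- is correct and more careful than what the paper writes.

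The gap is in your reduction to the mean-zero case. After passing to $\bar h(y) = h(\phi_0 + y) - h(\phi_0)$, your induction delivers $F[h(\phi)] \leq \widetilde{\bar h}\big(KF[\phi]\big)$, not $\widetilde h\big(CF[\phi]\big)$. Comparing coefficients gives only $\widetilde{\bar h}(z) \leq \widetilde h(z + |\phi_0|)$, so you reach $F[h(\phi)] \leq \widetilde h\big(KF[\phi] + |\phi_0|\big)$; the additive shift $|\phi_0|$ cannot be absorbed into a multiplicative constant, because $F$ is blind to the zero mode while $|\phi_0|$ is not controlled by it. A concrete counterexample: $h(z) = z^2$, $\phi = c + \varepsilon\cos x_1$ with $c$ fixed and $\varepsilon \to 0$; then $F[h(\phi)] \sim |c|\varepsilon$ but $\widetilde h(CF[\phi]) \sim C^2\varepsilon^2$. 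The lemma as stated therefore really needs the extra hypothesis $\widehat\phi_0 = 0$ (or at least $|\widehat\phi_0| \lesssim F[\phi]$). Under that hypothesis your argument is complete and rigorous; in the paper's applications to $\phi = U$, the Poisson equation together with $h(x) = \mathcal{O}(x^2)$ and the smallness of $U$ supply precisely such a bound on the zero mode.
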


\begin{proof} The proof is similar to \cite[Lemma 1.2]{EG_TN}.
By Lemma \ref{property_F} we know that $F[a_n \phi^n](t,z) \leq \vert a_n  \vert C^n F[\phi]^n (t,z)$, where the constant $C$ is the one from Lemma \ref{property_F}.  Therefore,
\begin{align*}
	F[h(\phi)](t,z) = F \Big[\sum_{n \geq 0} a_n \phi^n \Big](t,z) \leq \sum_{n \geq 0} \vert a_n \vert   C^n F[ \phi ]^n (t,z) = \widetilde{h}\Big(C F[\phi](t,z)\Big).
\end{align*}
\end{proof}

\begin{lem}
\label{lemma_F<G}
 Let $g(t,x,v) := f(t,x+vt,v)$ and 
 \begin{equation*}
	\rho (t,x) = \int_{\br^3} g(t, x-vt,v) \ dv.
\end{equation*}
 Let $F[\rho]$ and $G[g]$ be the generator functions defined in (\ref{GenF}) and (\ref{GenG}). Then there exists a constant $C,$ depending only on $\lambda_1,$ such that
\begin{equation*}
	F[\rho] (t,z) \leq C G[g(t)]^{\frac{1}{2}} (z).
\end{equation*}
for any $z\in[0,\lambda_1]$, $t \geq 0$.
\end{lem}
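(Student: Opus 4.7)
The starting point is to recognise that the density's Fourier coefficient is a ``diagonal'' evaluation of the mixed $(x,v)$-Fourier transform of $g$. By the change of variables $y = x + vt$, one checks that
\begin{equation*}
\widehat{g}(t,k,\eta) = \widehat{f}(t,k,\eta-kt), \qquad \text{so} \qquad \widehat{\rho}_k(t) \;=\; \widehat{f}(t,k,0) \;=\; \widehat{g}(t,k,kt).
\end{equation*}
Consequently
\begin{equation*}
F[\rho](t,z) \;=\; \sup_{k\in\mathbb{Z}^3\setminus\{0\}} \frac{A_{k,kt}\,|\widehat{g}_{k,kt}(t)|}{|k|^\alpha},
\end{equation*}
and since $|k|\geq 1$ on this index set, the factor $1/|k|^\alpha$ can simply be dropped.

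The next step trades the pointwise evaluation at $\eta=kt$ for an $L^2$ integral in $\eta$. For each fixed $k$, set $h_k(\eta) := A_{k,\eta}\,\widehat{g}_{k,\eta}(t)$ and apply the Sobolev embedding $H^2(\mathbb{R}^3)\hookrightarrow L^\infty(\mathbb{R}^3)$ (recall $d=3$), which gives
\begin{equation*}
|h_k(kt)|^2 \;\leq\; C \sum_{|j|\leq 2}\int_{\mathbb{R}^3}|\partial_\eta^j h_k(\eta)|^2\,d\eta.
\end{equation*}
The remaining ingredient is to move the weight $A_{k,\eta}$ past the derivatives. A direct computation (or Fa\`a di Bruno applied to $\eta\mapsto e^{z\langle k,\eta\rangle^\gamma}$) yields
\begin{equation*}
|\partial_\eta^{j_1}A_{k,\eta}| \;\leq\; C_{\lambda_1,|j_1|}\,A_{k,\eta} \qquad \text{uniformly in } k,\eta \text{ and } z\in[0,\lambda_1].
\end{equation*}
Once this is in hand, Leibniz gives $|\partial_\eta^j h_k(\eta)|^2 \leq C\sum_{|j'|\leq|j|}A_{k,\eta}^2|\partial_\eta^{j'}\widehat{g}_{k,\eta}|^2$, and summing in $k$ (using $\sup \leq$ sum) produces
\begin{equation*}
F[\rho](t,z)^2 \;\leq\; \sum_{k}|h_k(kt)|^2 \;\leq\; C\sum_{|j|\leq 2}\sum_{k}\int A_{k,\eta}^2\,|\partial_\eta^j\widehat{g}_{k,\eta}|^2\,d\eta \;\leq\; C\,G[g(t)](z),
\end{equation*}
the last inequality following because the sum over $|j|\leq 2$ is contained in the one over $|j|\leq d=3$ appearing in the definition of $G$.

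The main technical obstacle will be the uniform derivative bound on $A_{k,\eta}$. To obtain it, I would expand the chain rule applied to $e^{z\langle k,\eta\rangle^\gamma}$: each differentiation produces a prefactor of the form $z\gamma\langle k,\eta\rangle^{\gamma-2}\eta_i$, which is bounded since $\gamma-1\leq 0$; higher derivatives generate products of such terms, together with negative powers of $\langle k,\eta\rangle$ coming from $\langle k,\eta\rangle^\sigma$, all of which stay bounded for the same reason. Because $z$ ranges over the compact interval $[0,\lambda_1]$, the resulting constant depends only on $\lambda_1$, as claimed in the statement.
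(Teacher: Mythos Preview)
Your proposal is correct and follows essentially the same route as the paper: identify $\widehat{\rho}_k(t)=\widehat{g}_{k,kt}(t)$, drop the $|k|^{-\alpha}$, apply Sobolev embedding in $\eta$ to the weighted function $A_{k,\eta}\widehat{g}_{k,\eta}$, use the Leibniz rule together with the uniform bound $|\partial_\eta^{j}A_{k,\eta}|\leq C_{\lambda_1}A_{k,\eta}$, and then enlarge to the full $k$-sum defining $G$. The only cosmetic differences are that the paper invokes $H^3_\eta\hookrightarrow L^\infty$ (matching the range $|j|\leq d$ in the definition of $G$) rather than $H^2$, and it passes from a fixed $k$ to the sum over all modes by direct enlargement rather than via $(\sup_k a_k)^2\leq\sum_k a_k^2$; both variants are equivalent here.
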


\begin{proof}
For this proof we will need the inequality (2.10) in \cite{BMM_13}. Namely, for $A_{k,\eta}=e^{z \langle k, \eta \rangle^{\gamma}}   \langle k, \eta \rangle^{\sigma}$ and  $j \in \bn^d$ a multi-index, there exists a constant $\bar C= \bar C(j,\lambda_1)$ such that
\begin{equation}
\label{derivative Aketa}
	\left| \partial_\eta^j A_{k, \eta} \right| \leq \bar C(j)\frac{1}{\langle k, \eta \rangle^{\vert j \vert (1-\gamma)} } A_{k, \eta} \leq \bar C(j) A_{k,\eta},
\end{equation}
for every $z\in [0,\lambda_1].$
Let us now prove the result. First, we can show that $\widehat{\rho}_k (t) = \widehat{g}_{k,kt} (t)$.  Indeed,
\begin{align*}
	\widehat{\rho}_{k} (t) &=    \int_{\bt^3} \rho(t,x) e^{-ikx} \ dx \\
	&=   \int_{\bt^3} \int_{\br^3} g(t,x-vt,v) \ dv \ e^{-ikx} \ dx  =  \int_{\bt^3 \times \br^3} g(t,y,v) e^{-iky} e^{-iktv} \ dy \ dv = \widehat{g}_{k,kt} (t).
\end{align*}
Therefore,
\begin{align*}
	A_{k,kt} \vert \widehat{\rho}_{k} (t)  \vert \frac{1}{\vert k \vert^\alpha} & \leq A_{k,kt} \vert \widehat{g}_{k,kt} (t) \vert \leq \sup_{\eta \in \br^3} A_{k,\eta} \vert \widehat{g}_{k,\eta} (t) \vert \leq \left\Vert A_{k,\eta}  \widehat{g}_{k,\eta} (t) \right\Vert_{H_\eta^3},
\end{align*}
where we used the $L^\infty$ Sobolev embedding.  Then by definition of $H_\eta^3$ and inequality (\ref{derivative Aketa}) we obtain,
\begin{align*}
	A_{k,kt} \vert \widehat{\rho}_{k} (t)  \vert \frac{1}{\vert k \vert^\alpha} & \leq \Biggl( \sum_{\vert j \vert \leq 3} \left\Vert \partial_\eta^j \big( A_{k,\eta}  \widehat{g}_{k,\eta} (t) \big)\right\Vert_{L_\eta^2}^2  \Biggr)^\frac{1}{2}  \leq  \Bigg( \sum_{\vert j \vert \leq 3} \sum_{i \leq j} \left( \frac{j!}{i! (j-i)!} \right)^2 \left\Vert \left( \partial_\eta^i A_{k,\eta} \right) \left( \partial_\eta^{(j-i)}  \widehat{g}_{k,\eta} (t) \right) \right\Vert_{L_\eta^2}^2  \Bigg)^\frac{1}{2} \\
		& \leq  \Bigg( \sum_{\vert j \vert \leq 3} \sum_{i \leq j}\bar C(i,\lambda_1) \frac{j!}{i! (j-i)!} \left\Vert  A_{k,\eta}  \left( \partial_\eta^{(j-i)}  \widehat{g}_{k,\eta} (t) \right) \right\Vert_{L_\eta^2}^2  \Bigg)^\frac{1}{2}  \\
		&\leq C(\lambda_1) \Bigg( \sum_{\vert j \vert \leq 3}  \left\Vert  A_{k,\eta}   \partial_\eta^j  \widehat{g}_{k,\eta} (t) \right\Vert_{L_\eta^2}^2  \Bigg)^\frac{1}{2} \\
	& \leq C\Bigg( \sum_{\vert j \vert \leq 3} \sum_{m \in \mathbb{Z}^3} \int_{\br^3}  A_{m,\eta}^2   \left| \partial_\eta^j  \widehat{g}_{m,\eta} (t) \right|^2 \ ds  \Bigg)^\frac{1}{2}  \leq C \,G[g(t)]^\frac{1}{2}(z).
\end{align*}
Hence,
\begin{equation*}
	F[\rho](t,z) = \sup_{k \in \mathbb{Z}^3 \setminus \{0\}} A_{k,kt} \vert \widehat{\rho}_{k} (t)  \vert \frac{1}{\vert k \vert^\alpha}  \leq  C G[g(t)]^\frac{1}{2}(z).
\end{equation*}

\end{proof}

\begin{rem}
In particular, we note that 
\begin{equation}
\label{inequaltiy_g}
	\sup_{\eta \in \br^d} A_{k,\eta} \vert \widehat{g}_{k,\eta} (t) \vert \leq C G[g(t)]^\frac{1}{2}(z).
\end{equation}
We will need this inequality later in the proof of the Theorem \ref{Thm1}.
\end{rem}
Finally, we give two results on the initial condition. 

\begin{lem}
\label{Assumptions on the initial datum}
Let $f^0$ be the initial condition of (\ref{nlpvpme}) such that for $\varepsilon$ sufficiently small we have
\begin{equation*}
	G[f^0](\lambda_1) \leq \varepsilon.
\end{equation*}
Let $U$ be the potential in (\ref{nlpvpme}), i.e. $U$ solves
\begin{equation}
\label{PDE_U}
	-\Delta U + \beta U + h(U)=  \rho.
\end{equation}
Then,  there exists a constant $C_1$ such that 
\begin{equation*}
	F[\Delta U](0,\lambda_1) \leq C_1 \sqrt{\varepsilon}.
\end{equation*}

\end{lem}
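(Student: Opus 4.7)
The plan is to evaluate everything at $t=0$ and reduce the bound on $F[\Delta U]$ to a bound on $F[\rho]$ plus a nonlinear correction coming from $h(U)$, using the Fourier-side structure of the elliptic equation \eqref{PDE_U} and the calculus lemmas already established in this section.

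First, since $g(0,x,v) = f(0,x,v)= f^0(x,v)$, Lemma \ref{lemma_F<G} directly yields
\[
F[\rho](0,\lambda_1) \;\leq\; C\, G[f^0]^{1/2}(\lambda_1) \;\leq\; C\sqrt{\varepsilon}.
\]
Next I would take the Fourier transform of \eqref{PDE_U} at $t=0$. For $k \in \mathbb{Z}^3\setminus\{0\}$ (so $|k|^2+\beta \geq 1$ since $\beta \geq 0$), solving for $\widehat U_k$ and using $\reallywidehat{\Delta U}_k = -|k|^2 \widehat U_k$ gives
\[
\reallywidehat{\Delta U}_k(0) \;=\; -\frac{|k|^2}{|k|^2+\beta}\Bigl(\widehat\rho_k(0) - \widehat{h(U)}_k(0)\Bigr),
\qquad
\widehat U_k(0) \;=\; \frac{1}{|k|^2+\beta}\Bigl(\widehat\rho_k(0) - \widehat{h(U)}_k(0)\Bigr).
\]
Since the two multipliers $|k|^2/(|k|^2+\beta)$ and $1/(|k|^2+\beta)$ are both bounded by $1$ on $k\neq 0$, multiplying by $A_{k,kt}/|k|^\alpha$ at $t=0$ and taking the supremum gives
\[
F[\Delta U](0,\lambda_1) \;\leq\; F[\rho](0,\lambda_1) + F[h(U)](0,\lambda_1),
\qquad
F[U](0,\lambda_1) \;\leq\; F[\rho](0,\lambda_1) + F[h(U)](0,\lambda_1).
\]

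Now I would invoke Lemma \ref{property_F_2} to control $F[h(U)](0,\lambda_1) \leq \widetilde h\bigl(C F[U](0,\lambda_1)\bigr)$. Because $h(x) = \mathcal O(x^2)$ near $0$, we have $\widetilde h(y) \leq C'y^2$ for $y$ sufficiently small, so writing $X := F[U](0,\lambda_1)$ and $Y := F[\rho](0,\lambda_1) \leq C\sqrt{\varepsilon}$, the two inequalities above give the scalar inequality $X \leq Y + C'' X^2$. For $\varepsilon$ small this bootstrap (or equivalently a Banach fixed-point construction of $U$ in the ball $\{F[U](0,\lambda_1) \leq 2C\sqrt\varepsilon\}$, which also makes $\|U\|_{L^\infty}\leq R$ automatic thanks to $\sigma-\alpha>3$) closes to $X \leq 2C\sqrt{\varepsilon}$, and substituting back yields
\[
F[\Delta U](0,\lambda_1) \;\leq\; C\sqrt{\varepsilon} + C''\bigl(2C\sqrt\varepsilon\bigr)^2 \;\leq\; C_1\sqrt{\varepsilon}.
\]

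The only delicate point is verifying that the potential $U$ associated with $f^0$ actually lies in the small-ball regime where Lemma \ref{property_F_2} applies; this is precisely the well-posedness discussion from the introduction (convex Euler--Lagrange functional near $U\equiv 0$), which guarantees a unique small solution once $\rho$ is small in a sufficiently strong norm, so the bootstrap is legitimate rather than circular.
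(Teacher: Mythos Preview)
Your approach is correct and takes a genuinely different route from the paper's. Both proofs start identically: Fourier-transform \eqref{PDE_U}, use $|k|^2/(|k|^2+\beta)\leq 1$, and combine Lemmas~\ref{property_F_2} and~\ref{lemma_F<G} to reach the scalar inequality $F[\Delta U](0,z)\leq C\sqrt{\varepsilon}+\widetilde h\bigl(CF[\Delta U](0,z)\bigr)$. The divergence is in how that inequality is closed. You work directly at $z=\lambda_1$ and invoke a Banach fixed-point construction of $U$ in the Gevrey ball $\{F[U](0,\lambda_1)\leq 2C\sqrt\varepsilon\}$, which is legitimate since the algebra property (Lemma~\ref{property_F}) makes $U\mapsto(\beta-\Delta)^{-1}(\rho-h(U))$ a contraction there. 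The paper instead runs a continuity argument in the parameter $z$: it first bounds $F[\Delta U](0,0)\leq\|U(0)\|_{H^{\sigma+2}}$ and appeals to classical Sobolev elliptic regularity for \eqref{PDE_U} (using $\|\rho(0)\|_{H^{\sigma+2}}\leq CF[\rho](0,\lambda_1)\leq C\sqrt\varepsilon$) to get smallness at $z=0$, and then propagates along $z\in[0,\lambda_1]$ by showing the first $z^*$ at which $F[\Delta U](0,z^*)$ reaches a fixed threshold $\delta$ cannot occur before $\lambda_1$. Your argument is more self-contained and avoids the Sobolev detour entirely; the paper's version has the advantage of separating the existence and smallness of $U$ (handled by standard elliptic theory) from the upgrade to Gevrey regularity, which connects more explicitly to the uniqueness discussion in the introduction.
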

\begin{proof}
Taking the Fourier transform of (\ref{PDE_U}), we get
\begin{equation*}
	\widehat{U}_k = \frac{1}{\beta + \vert k \vert^2}  \widehat{\rho}_{k} - \frac{1}{\beta + \vert k \vert^2} \widehat{h(U)}_k.
\end{equation*}
We then multiply both sides by $\vert k \vert^2$,
\begin{equation*}
	\vert k \vert^2 \widehat{U}_k = \frac{\vert k \vert^2}{\beta + \vert k \vert^2}  \widehat{\rho}_{k} - \frac{\vert k \vert^2}{\beta + \vert k \vert^2} \widehat{h(U)}_k.
\end{equation*}
 Therefore, by definition (\ref{GenF}), we get
 \begin{align*}
 	F[\Delta U](t,z)  \leq F[\rho](t,z) + F[h(U)](t,z) & \leq F[\rho](t,z) +\widetilde{h} \left( C F[U](t,z) \right) \\
 	& \leq F[\rho](t,z) + \widetilde{h} \left( C F[\Delta U](t,z) \right),
 \end{align*}
  where we used Lemma \ref{property_F_2}, and the fact that $\widetilde{h}$ is an increasing function with $F[U](t,z) \leq  F[\Delta U](t,z)$. Then, since the generator function is increasing in $z$, we have at $t=0$ and for every $z \in [0, \lambda_1]$, 
  \begin{equation}
  \label{Bound_F_Delta_U}
  	F[\Delta U](0,z)\leq F[\rho](0,\lambda_1) + \widetilde{h} \left( C F[\Delta U](0,z) \right).
  \end{equation}
Therefore, using Lemma \ref{lemma_F<G} and the assumption (\ref{IC}) on the initial data, we have
\begin{equation}
\label{Bound_F_rho_lambda1}
	F[\rho](0,\lambda_1) \leq C G[f^0]^\frac{1}{2}(\lambda_1) \leq C \sqrt{\varepsilon}.
\end{equation}
Thus,  inserting (\ref{Bound_F_rho_lambda1}) in (\ref{Bound_F_Delta_U}), we obtain
 \begin{align*}
 	F[\Delta U](0,z) \leq C \sqrt{\varepsilon}  + \widetilde{h}\left( C F[\Delta U](0,z) \right).
 \end{align*}
We now claim the following: there exists a small $\varepsilon_1 > 0$ such that,
\begin{equation}
\label{Claim_F_Delta_U}
	F[\Delta U](0,0) \leq \varepsilon_1.
\end{equation}
Before proving this claim, we first show how using it we can conclude the proof of the lemma.

For every $z \in [0, \lambda_1]$
\begin{eqnarray*}
    \left \{
    \begin{array}{l}
   F[\Delta U](0,z) \leq C \sqrt{\varepsilon}  + \widetilde{h}\left(C F[\Delta U](0,z) \right), \\
    F[\Delta U](0,0) \leq \varepsilon_1.
    \end{array}
    \right.
\end{eqnarray*}
Since $F[\Delta U](0,0) \leq \varepsilon_1 \ll 1$,  given $\delta > \varepsilon_1$ small, by continuity there exists a $z > 0$ such that $F[\Delta U](0,z) \leq  \delta < 1 $,  where $\delta$ will be fixed later. Then by assumption, $\widetilde{h}(x) = \mathcal{O}(x^2)$ as $x$ tends to zero,  we have 
 \begin{equation*}
	\widetilde{h}\Big(C F[\Delta U](0,z) \Big) \leq C (F[\Delta U] (0,z))^2  \leq C_0 \delta F[\Delta U](0,z).
\end{equation*}
Thus, $F[\Delta U](0,z) \leq C  \sqrt{\varepsilon} + C_0 \delta F[\Delta U](0,z)$. Hence, if we choose $\delta =  \frac{1}{2C_0} $ we get 
$$
F[\Delta U](0,z) \leq C \sqrt{\varepsilon} + \frac{1}{2} F[\Delta U](0,z)
$$
which implies
 \begin{equation*}
	F[\Delta U](0,z)\leq  C_1  \sqrt{\varepsilon},
\end{equation*}
for $z$ small.
Let us assume that $z^* < \lambda_1$ is the first value for which $F[\Delta U](0,z^*) = \delta.$ Then, if $\sqrt{\varepsilon} < \frac{\delta}{C_1}$, we have 
\begin{equation*}
	 \delta = F[\Delta U](0,z^*) \leq  C_1 \sqrt{\varepsilon} < \frac{ C_1 \delta}{C_1} = \delta,
\end{equation*} so we have a contradiction about  $z^* < \lambda_1$, hence $z^* = \lambda_1$.
Therefore, one can repeat the argument above for every $z \leq \lambda_1$, and we get:
$$
F[\Delta U](0,z) \leq C_1  \sqrt{\varepsilon},
$$
and in particular
\begin{equation*}
	F[\Delta U](0,\lambda_1) \leq C_1  \sqrt{\varepsilon}.
\end{equation*}
Let us prove Claim (\ref{Claim_F_Delta_U}). By (\ref{GenF}), we have
 \begin{align*}
 	F[\Delta U](0,0) &  =  \sup_{k \in \mathbb{Z}^3 \setminus \{0\}} \vert k \vert^2 \vert \widehat{ U}_k (0) \vert \langle k, 0 \rangle^{\sigma} \frac{1}{\vert k \vert^{\alpha }}  \\
	& \leq \bigg( \sum_{k \in \mathbb{Z}^3 \setminus \{0\}} \vert k \vert^4 \vert  \widehat{ U}_k (0) \vert^2 \langle k \rangle^{2\sigma}  \bigg)^{\frac{1}{2}} 
 	 \leq \bigg( \sum_{k \in \mathbb{Z}^3}  \vert \widehat{ U}_k (0) \vert^2 \langle k \rangle^{{2\sigma+4}  \bigg)^{\frac{1}{2}}}
 	\leq  \| U(0) \|_{H^{ \sigma + 2} (\mathbb{T}^d)}.
 \end{align*} 
 
Then, we can show that the $H^{\sigma + 2}$-norm of $\rho$ is bounded in the following way:
 \begin{align*}
 	\| \rho(0) \|_{H^{\sigma + 2} (\mathbb{T}^d)} & = \bigg( \sum_{k \in \mathbb{Z}^3 \setminus \{0\}}  \vert \widehat{ \rho}_k (0) \vert^2 \langle k \rangle^{2\sigma + 4} \bigg)^{\frac{1}{2}}  \leq  \bigg( \sum_{k \in \mathbb{Z}^3 \setminus \{0\}} e^{2 \lambda_1 \langle k \rangle^{\gamma}} \vert \widehat{\rho}_k (0) \vert^2 \langle k \rangle^{2\sigma} \frac{\vert k \vert^{2\alpha}}{\vert k \vert^{2\alpha}} e^{- 2 \lambda_1 \langle k \rangle^{\gamma}} \langle k \rangle^{4} \bigg)^{\frac{1}{2}} \\
 	& \leq  \bigg( \sup_{k \in \mathbb{Z}^3 \setminus \{0\}} e^{ \lambda_1 \langle k \rangle^{\gamma}} \vert \widehat{\rho}_k (0) \vert \langle k \rangle^{\sigma} \frac{1}{\vert k \vert^\alpha} \bigg) \bigg( \sum_{k \in \mathbb{Z}^3 \setminus \{0\}}  e^{- 2 \lambda_1 \langle k \rangle^{\gamma}} \langle k \rangle^{4+ 2 \alpha} \bigg)^{\frac{1}{2}} \\
 	& \leq C F[\rho](0,\lambda_1) \leq C \sqrt{\varepsilon},
 \end{align*}
 where we used (\ref{Bound_F_rho_lambda1}) for the last inequality.  Therefore, looking at the equation (\ref{PDE_U}) at time $t=0$,  we have by elliptic regularity and by continuous dependence on the right-hand side,  that there exists an $\varepsilon_1>0$ depending on $\varepsilon$ such that
 \begin{equation*}
 	\| U(0) \|_{H^{\sigma + 2} (\mathbb{T}^d)} \leq \varepsilon_1.
 \end{equation*}
 This proves (\ref{Claim_F_Delta_U}) and ends the proof of Lemma \ref{Assumptions on the initial datum}.
 \end{proof}
\begin{lem}
\label{Assumptions on the initial datum 2}
	If $F[\Delta U](0,\lambda_1) \leq C_1 \sqrt{\varepsilon}$ then $F[ U](0,z) \leq C_1 \sqrt{\varepsilon}$ for every $z \in [0,\lambda_1].$
\end{lem}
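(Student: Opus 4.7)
The plan is to observe that the inequality $F[U](0,z) \leq F[\Delta U](0,z)$ holds trivially at $t=0$ because of the nonvanishing condition $k \neq 0$ in the supremum defining $F[\cdot]$, and then to combine this with the monotonicity of the generator function in the parameter $z$.

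More precisely, I would first write both quantities explicitly at time zero. At $t=0$ one has $\langle k, kt\rangle = \langle k, 0\rangle = \langle k\rangle$, and taking the Fourier transform of $\Delta U$ gives $\widehat{\Delta U}_k(0) = -|k|^2 \widehat{U}_k(0)$. Thus
\begin{equation*}
F[U](0,z) = \sup_{k\in\mathbb{Z}^3\setminus\{0\}} e^{z\langle k\rangle^\gamma}\,|\widehat U_k(0)|\,\langle k\rangle^\sigma\,\frac{1}{|k|^\alpha},
\qquad
F[\Delta U](0,z) = \sup_{k\in\mathbb{Z}^3\setminus\{0\}} e^{z\langle k\rangle^\gamma}\,|k|^2|\widehat U_k(0)|\,\langle k\rangle^\sigma\,\frac{1}{|k|^\alpha}.
\end{equation*}
Since $k$ ranges over $\mathbb{Z}^3\setminus\{0\}$ we have $|k|^2 \geq 1$, so term by term
\begin{equation*}
e^{z\langle k\rangle^\gamma}\,|\widehat U_k(0)|\,\langle k\rangle^\sigma\,\frac{1}{|k|^\alpha} \leq e^{z\langle k\rangle^\gamma}\,|k|^2\,|\widehat U_k(0)|\,\langle k\rangle^\sigma\,\frac{1}{|k|^\alpha},
\end{equation*}
which gives $F[U](0,z) \leq F[\Delta U](0,z)$ for every $z \geq 0$.

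Next, note that $z \mapsto F[\Delta U](0,z)$ is nondecreasing, since $z$ enters only through the factor $e^{z\langle k\rangle^\gamma}$ with nonnegative exponent. Therefore, for any $z \in [0,\lambda_1]$,
\begin{equation*}
F[U](0,z) \leq F[\Delta U](0,z) \leq F[\Delta U](0,\lambda_1) \leq C_1 \sqrt{\varepsilon},
\end{equation*}
where the last inequality is the assumption coming from Lemma \ref{Assumptions on the initial datum}. This gives the claim. There is no real obstacle here; the content is just the pointwise inequality $|k|^2 \geq 1$ on $\mathbb{Z}^3\setminus\{0\}$ combined with monotonicity in $z$.
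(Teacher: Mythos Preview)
Your proof is correct and follows the same approach as the paper: use $|k|^2 \geq 1$ on $\mathbb{Z}^3\setminus\{0\}$ to get $F[U](0,z)\leq F[\Delta U](0,z)$, then monotonicity in $z$. The paper's version is written for general $t$ but is otherwise identical, and it leaves the monotonicity-in-$z$ step implicit where you spell it out.
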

\begin{proof}
It suffices to write the definition of generator functions (\ref{GenF}),
\begin{align*}
	F[U](t,z) &= \sup_{k \in \mathbb{Z}^3 \setminus \{0\}} e^{z \langle k, kt \rangle^{\gamma}} \vert \widehat{U}_k (t) \vert \langle k, kt \rangle^{\sigma} \frac{1}{\vert k \vert^{ \alpha }}\\
	& \leq \sup_{k \in \mathbb{Z}^3 \setminus \{0\}} e^{z \langle k, kt \rangle^{\gamma}} \vert k \vert^2 \vert \widehat{U}_k (t) \vert \langle k, kt \rangle^{\sigma} \frac{1}{\vert k \vert^{ \alpha }} = F[\Delta U](t,z).
\end{align*}

\end{proof}

\section{Linear Landau damping}
\label{section Linear Landau damping}
In this section, we prove the linear Landau damping of the system (\ref{lpvpme}). In Section \ref{Equation on the density}, we derive a closed equation for the density $\rho$.  Next, we obtain estimates on the resolvent kernel $K$ and $\rho$ in Section \ref{Resolvent estimates} and Section \ref{Pointwise estimates}. Then using the generator functions, we bound $\rho$ and $U$ in Section \ref{Estimate on F_rho} and Section \ref{Estimate on F_U}. Finally, Section \ref{Asymptotic behaviour} gives us the asymptotic behaviour of a solution of the linear dynamic.

For simplicity, we introduce $g(t,x,v) := f(t,x+vt,v)$,  the pull-back of $f$ by the free transport flow. And we observe that 
\begin{equation*}
    \partial_t g = \partial_t f + v \cdot \nabla_x f, \quad \nabla_x g = \nabla_x f.
\end{equation*}
Therefore the Vlasov equation (\ref{lpvpme}) for $g$ becomes
\begin{equation}
\label{equation_g} 
    \partial_t g + E(t,x+vt) \cdot \nabla_v \mu = 0.
\end{equation}

\subsection{Equation for the density}
\label{Equation on the density}

Throughout this section, we focus on the case $k\neq0$. Indeed, by the conservation of the mass and the fact that the mass of the initial data $f^0$ is zero, we have
\begin{equation*}
	\widehat{\rho}_0(t) = \int_{\mathbb{T}^3} \rho(t,x) \ dx = \int_{\mathbb{T}^3 \times \mathbb{R}^3} f(t,x,v) \ dx \ dv = \int_{\mathbb{T}^3 \times \mathbb{R}^3} f^0(x,v) \ dx \ dv =  0.\\
\end{equation*}
Moreover, we don't state any global well-posedness for the equation  (\ref{equation_g}), but we can see Section \ref{section Linear Landau damping} as priori estimates on the solution $g$ with analytic regularity and for small initial data. And once we have proved that the estimates hold for all times, we have the global well-posedness of the system (\ref{lpvpme}). 

\begin{lem}
\label{lemma_rhohat}
Let g be the unique solution to the problem $(\ref{equation_g})$ with initial condition $g(0,x,v) = f^0(x,v)$. Then we have the following equation for the Fourier transform of $\rho$. 
\begin{equation}
\label{rhohat}
	\widehat{\rho}_k (t) + \frac{\vert k \vert^2}{\beta + \vert k \vert^2}\int_0^t  \widehat{\rho}_k(s)  (t-s)  \widehat{\mu}(k(t-s)) \ ds = \widehat{S}_k (t),
\end{equation}
where the source term is given by
 \begin{equation}
 \label{def_S_k}
 	 \widehat{S}_k (t) =  \widehat{f^0}_{k,kt} + \frac{\vert k \vert^2}{\beta + \vert k \vert^2} \int_0^t  \widehat{h(U)}_k(s) (t-s) \widehat{ \mu } (k (t - s)) \ ds. 
 \end{equation}
\end{lem}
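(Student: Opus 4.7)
The plan is to derive the Volterra integral equation \eqref{rhohat} by integrating the pulled-back Vlasov equation along characteristics, taking Fourier transforms, and then substituting the Poisson coupling in Fourier space.

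First I would integrate \eqref{equation_g} in time with initial condition $g(0,x,v)=f^0(x,v)$ to obtain
\begin{equation*}
   g(t,x,v) = f^0(x,v) - \int_0^t E(s,x+vs)\cdot \nabla_v \mu(v)\,ds.
\end{equation*}
Next I would take the Fourier transform in both $x$ and $v$ at the wavenumber $(k,\eta)$. The key identity, already established in the proof of Lemma \ref{lemma_F<G}, is that $\widehat{\rho}_k(t)=\widehat{g}_{k,kt}(t)$, so it suffices to evaluate $\widehat{g}_{k,\eta}(t)$ at $\eta=kt$. The only nontrivial step is computing the Fourier symbol of the source $E(s,x+vs)\cdot\nabla_v\mu(v)$: expanding $E$ in its Fourier series, the integral in $x$ collapses the sum to the single mode $\ell=k$, leaving
\begin{equation*}
\int_{\mathbb{R}^3} \nabla_v \mu(v)\, e^{-i(\eta-ks)\cdot v}\,dv = i(\eta-ks)\,\widehat{\mu}(\eta-ks),
\end{equation*}
via the standard relation $\widehat{\nabla_v \mu}(\xi) = i\xi\,\widehat{\mu}(\xi)$. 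Setting $\eta = kt$ converts $\eta-ks$ into $k(t-s)$.

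Then I would use the Fourier representation of the electric field, $\widehat{E}_k(s) = -ik\,\widehat{U}_k(s)$, so that the pairing $i\widehat{E}_k(s)\cdot k(t-s)$ simplifies to $|k|^2(t-s)\widehat{U}_k(s)$. At this point
\begin{equation*}
   \widehat{\rho}_k(t) = \widehat{f^0}_{k,kt} - \int_0^t |k|^2\, \widehat{U}_k(s)\,(t-s)\,\widehat{\mu}(k(t-s))\,ds.
\end{equation*}
Finally I would substitute the Fourier transform of the Poisson equation $-\Delta U + \beta U + h(U) = \rho$, namely
\begin{equation*}
   \widehat{U}_k(s) = \frac{1}{\beta+|k|^2}\bigl(\widehat{\rho}_k(s)-\widehat{h(U)}_k(s)\bigr),
\end{equation*}
which splits the integrand into a linear-in-$\rho$ piece and a nonlinear $h(U)$ piece. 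Transposing the $\widehat{\rho}_k$ part to the left-hand side and collecting the $\widehat{h(U)}_k$ part together with $\widehat{f^0}_{k,kt}$ on the right-hand side yields exactly \eqref{rhohat} with the source \eqref{def_S_k}.

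This is essentially a bookkeeping proof rather than one with a genuine obstacle: the only place one must be careful is the sign and factor-of-$i$ conventions when computing the Fourier symbol of $E(s,x+vs)\cdot\nabla_v\mu$ and when converting $\widehat{E}$ to $\widehat{U}$. The case $k=0$ is handled separately by the zero-mass condition already verified at the beginning of Section \ref{Equation on the density}, so we may restrict attention to $k\neq 0$ throughout, which ensures that dividing by $\beta+|k|^2$ is harmless even when $\beta=0$.
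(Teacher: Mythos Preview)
Your proposal is correct and follows essentially the same route as the paper's own proof: integrate the pulled-back equation, take the Fourier transform at $(k,\eta)$, evaluate at $\eta=kt$ using $\widehat{g}_{k,kt}(t)=\widehat{\rho}_k(t)$, replace $\widehat{E}_k$ by $-ik\,\widehat{U}_k$, and then substitute the Fourier form of the Poisson coupling to split off the $\widehat{\rho}_k$ and $\widehat{h(U)}_k$ parts. The only cosmetic difference is that the paper first takes the Fourier transform of \eqref{equation_g} and then integrates the resulting ODE in time, whereas you integrate first and then transform; the two orderings are of course equivalent.
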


\begin{proof}

We start by taking the Fourier transform with respect to $k$ and $\eta$ of (\ref{equation_g}). So using the properties of the Fourier transform we have,
\begin{equation*}
	\widehat{\partial_t g }(t, k, \eta) = \partial_t \widehat{g}_{k, \eta} (t),
\end{equation*}
and 
\begin{equation}
\label{E_mu_big_hat}
	\reallywidehat{\left( E(t,x+vt) \cdot \nabla_v \mu \right) }(k,\eta) = \widehat{E}_k (t) e^{ikvt} \cdot \widehat{\nabla_v \mu } (\eta) =  \widehat{E}_k (t) \cdot  \widehat{\nabla_v \mu } (\eta - kt).
\end{equation}
Therefore, (\ref{equation_g}) transforms into 
\begin{equation*}
	\partial_t \widehat{g}_{k, \eta} (t) + \widehat{E}_k (t) \cdot  \widehat{\nabla_v \mu } (\eta - kt) = 0.
\end{equation*}
The solution of this differential equation is given by 
\begin{equation}
\label{equ_g_hat}
	\widehat{g}_{k,\eta} (t) = \widehat{g}_{k,\eta} (0) - \int_0^t \widehat{E}_k (s) \cdot \widehat{\nabla_v \mu } (\eta - ks) \ ds.
\end{equation}
Next, we consider the Fourier mode $\eta = kt$. Recall that $\widehat{g}_{k,kt} (t) = \widehat{\rho}_k (t) $ and $\widehat{g}_{k,kt} (0) = \widehat{f^0}_{k,kt}.$ Hence, we have 
\begin{equation}
\label{rho_hat_aux1}
	\widehat{\rho}_{k} (t)  = \widehat{f^0}_{k,kt} - \int_0^t \widehat{E}_k (s) \cdot  \widehat{\nabla_v \mu } (k (t - s)) \ ds.
\end{equation}
Noticing that
\begin{align}
\label{E_mu}
	\widehat{E}_k (s) \cdot  \widehat{\nabla_v \mu } (k(t - s)) & = \widehat{E}_k (s) \cdot  ik(t-s) \widehat{ \mu } (k(t - s)).
	\end{align}
(\ref{rho_hat_aux1}) becomes
\begin{equation}
\label{rho_hat_aux2}
	\widehat{\rho}_{k} (t)  =  \widehat{f^0}_{k,kt} - \int_0^t ik \cdot  \widehat{E}_k (s)  (t-s) \widehat{ \mu } (k(t - s))\ ds.
\end{equation}
Taking the Fourier transform of the force field and the Poisson coupling of (\ref{lpvpme}), we have
\begin{equation}
\label{E_hat}
	\widehat{E}_k = - ik \widehat{U}_k,  \quad \mbox{or equivalently} \quad ik \cdot \widehat{E}_k = \vert k \vert^2 \widehat{U}_k,
\end{equation}
and
\begin{equation}
\label{rho_hat}
	 - \widehat{\Delta U}_k +\beta  \widehat{U}_k +\widehat{h(U)}_k= (\vert k \vert^2 + \beta) \widehat{U}_k +   \widehat{h(U)}_k =  \widehat{\rho}_{k}.
\end{equation}
The latter equation can be rewritten as follows:
\begin{equation*}
	\widehat{U}_k = \frac{1}{\beta + \vert k \vert^2}  \widehat{\rho}_{k} - \frac{1}{\beta + \vert k \vert^2} \widehat{h(U)}_k.
\end{equation*}
Putting (\ref{E_hat}) and (\ref{rho_hat}) together, we get
\begin{equation}
\label{E_hat2}
	 ik \cdot \widehat{E}_k = \frac{\vert k \vert^2}{\beta + \vert k \vert^2}  \widehat{\rho}_{k} - \frac{\vert k \vert^2}{\beta + \vert k \vert^2} \widehat{h(U)}_k.
\end{equation}
Hence from (\ref{rho_hat_aux2}), we have the desired result:
\begin{equation*}
	\widehat{\rho}_{k} (t) + \frac{\vert k \vert^2}{\beta + \vert k \vert^2} \int_0^t \widehat{\rho}_{k}(s) (t-s) \widehat{ \mu } (k (t - s)) \ ds  = \widehat{S}_k (t),
\end{equation*}
where
 \begin{equation*}
 	 \widehat{S}_k (t) =  \widehat{f^0}_{k,kt} + \frac{\vert k \vert^2}{\beta + \vert k \vert^2} \int_0^t  \widehat{h(U)}_k(s) (t-s) \widehat{ \mu } (k (t - s)) \ ds. 
 \end{equation*}
\end{proof}

\subsection{Resolvent estimates}
\label{Resolvent estimates}

In this section, we consider the Laplace transform of (\ref{rhohat}) to transform an integral equation on $\rho$ into an algebraic equation. First, let us recall that the Laplace transform is linear and 
\begin{equation}\label{Property-Laplace}
	\mathcal{L}\left[\int_0^t \phi(s) \psi(t-s) \ ds \right](\lambda) = \mathcal{L}[\phi](\lambda) \mathcal{L}[\psi](\lambda).
\end{equation}
Thus, we get
\begin{equation*}
	\mathcal{L}[\widehat{\rho}_k](\lambda) + \frac{\vert k \vert^2}{\beta + \vert k \vert^2}  \mathcal{L}[t\widehat{\mu}(kt)](\lambda) \mathcal{L}[\widehat{\rho}_k](\lambda) = \mathcal{L}[\widehat{S}_k](\lambda),
\end{equation*}
which can be rewritten as
\begin{equation}
\label{Lrho1}
	\mathcal{L}[\widehat{\rho}_k](\lambda)  =  \frac{\mathcal{L}[\widehat{S}_k](\lambda)}{1 +  \frac{\vert k \vert^2}{\beta + \vert k \vert^2}  \mathcal{L}[t\widehat{\mu}(kt)](\lambda) }.
\end{equation}
The stability condition (\ref{stability_cond}) ensures that the denominator never vanishes. 

\begin{rem}
We note that condition (\ref{stability_cond}), without the factor $\frac{\vert k \vert^2}{\beta + \vert k \vert^2}$, is the usual stability condition for homogeneous equilibrium of the (VP) system (see \cite{BMM_13, GNR, CM_CV}).  The extra factor $\frac{\vert k \vert^2}{\beta + \vert k \vert^2}$ comes from the different coupling used in system (\ref{nlpvpme}) compared to (VP).  Our stability condition (\ref{stability_cond}) is similar to the one used for the screened (VP) (see for example \cite{BMM_16} for the case $\beta > 0$ or \cite{HKNR,  huang_sharp_2022} for the case $\beta = 1$). Moreover, it is possible to show that the Penrose stability condition \cite{Penrose} implies our  condition  (\ref{stability_cond}) using the same argument as in \cite[Section 4.3]{BMM_13} (see also \cite[Section 3.4]{Villani_notes}).
\end{rem}

We define the resolvent kernel, $\widetilde{K}_k (\lambda)$, as
\begin{equation}
\label{resolvent}
	\widetilde{K}_k (\lambda) := - \frac{ \frac{\vert k \vert^2}{\beta + \vert k \vert^2} \mathcal{L}[t \widehat{\mu} (kt)](\lambda)}{1+\frac{\vert k \vert^2}{\beta + \vert k \vert^2} \mathcal{L}[t \widehat{\mu} (kt)](\lambda)}.
\end{equation}
Therefore, (\ref{Lrho1}) becomes
\begin{equation}
\label{Lrho2}
	\mathcal{L}[\widehat{\rho}_k](\lambda)  = \mathcal{L}[\widehat{S}_k](\lambda) +  \widetilde{K}_k (\lambda) \mathcal{L}[\widehat{S}_k](\lambda).
\end{equation}
Thanks to \cite[Lemma 3.2] {GNR}, we have the following property of the resolvent kernel.
\begin{lem}
\label{lemmaresolvent}
Let $\theta_0$ be the constant in (\ref{Pmu1}). Then there is a positive constant $\theta_1 < \frac{1}{2}\theta_0$ so that the function $\widetilde{K}_k (\lambda)$ is an analytic function in $ \lbrace \Re \lambda \geq - \theta_1 \vert k \vert \rbrace $. Moreover, there exists a constant $C$ such that
\begin{equation*}
	\vert \widetilde{K}_k (\lambda) \vert \leq \frac{C}{1 + \vert k \vert^2 +\vert \Im \lambda \vert^2},
\end{equation*}
uniformly in $\lambda$ and $k \neq 0$ so that $\Re \lambda = - \theta_1 \vert k \vert$.

\end{lem}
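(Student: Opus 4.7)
The plan is to verify analyticity of $\widetilde{K}_k(\lambda)$ in the strip $\{\Re \lambda \geq -\theta_1 |k|\}$ by combining the Penrose stability condition (H2) with the exponential decay of $\widehat{\mu}$ from (H1), and then to obtain the quadratic decay in $|\Im \lambda|$ on the boundary of this strip through two complementary pointwise bounds on $\mathcal{L}[t\widehat{\mu}(kt)](\lambda)$.

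First, for $k\ne 0$, (H1) implies $\int_0^\infty t|\widehat{\mu}(kt)| e^{-(\Re\lambda) t}\,dt < \infty$ whenever $\Re \lambda > -\theta_0 |k|$, so $\mathcal{L}[t\widehat{\mu}(kt)](\lambda)$ is holomorphic in this half-plane and $\widetilde{K}_k(\lambda)$ is holomorphic there away from zeros of the denominator
\[ D_k(\lambda) := 1 + \frac{|k|^2}{\beta+|k|^2}\,\mathcal{L}[t\widehat{\mu}(kt)](\lambda). \]
To extend non-vanishing of $D_k$ from $\{\Re\lambda\ge 0\}$ (guaranteed by (H2) with lower bound $\kappa_0$) into a slightly larger strip, I would estimate the variation of the Laplace integral. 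Writing $\lambda = \xi + i\tau$ with $-\theta_1|k|\le\xi\le 0$, using $|e^{-\xi t}-1|\le e^{\theta_1|k|t}-1$ together with (H1) gives
\[ \bigl|\mathcal{L}[t\widehat{\mu}(kt)](\lambda) - \mathcal{L}[t\widehat{\mu}(kt)](i\tau)\bigr| \le C\int_0^\infty t\,e^{-\theta_0|k|t}\bigl(e^{\theta_1|k|t}-1\bigr)\,dt \le \frac{C\theta_1}{|k|^2}, \]
and since $\frac{|k|^2}{\beta+|k|^2}\le 1$, this yields $|D_k(\lambda)-D_k(i\tau)| \le C\theta_1$. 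Choosing $\theta_1 < \theta_0/2$ small enough that $C\theta_1 < \kappa_0/2$ produces $|D_k(\lambda)|\ge\kappa_0/2$ throughout the strip, so $\widetilde{K}_k$ extends analytically there.

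For the pointwise estimate on $\Re\lambda = -\theta_1|k|$, I would combine two bounds on $\mathcal{L}[t\widehat{\mu}(kt)](\lambda)$. The ``low-frequency'' bound follows from absolute integrability:
\[ \bigl|\mathcal{L}[t\widehat{\mu}(kt)](\lambda)\bigr| \leq C\int_0^\infty t\,e^{-(\theta_0-\theta_1)|k|t}\,dt \leq \frac{C}{|k|^2}. \]
The ``high-frequency'' bound comes from integrating by parts twice in $t$ against $e^{-i\tau t}$, applied to $\psi(t) := t\,\widehat{\mu}(kt)\,e^{\theta_1|k|t}$. Since $\psi(0)=0$, the first boundary term vanishes; the second produces $\psi'(0)=\widehat{\mu}(0)=1$ by (H3), and differentiating with the help of the derivative bounds in (H1) gives $|\psi''(t)| \le C(|k| + |k|^2 t)\,e^{-(\theta_0-\theta_1)|k|t}$, whose integral over $\mathbb{R}_+$ is uniformly bounded in $k$. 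Two integrations by parts thus yield $|\mathcal{L}[t\widehat{\mu}(kt)](\lambda)| \le C/\tau^2$. Interpolating, and using $|k|\ge 1$ for $k\in\mathbb{Z}^d\setminus\{0\}$, I obtain $|\mathcal{L}[t\widehat{\mu}(kt)](\lambda)| \le C/(1+|k|^2+\tau^2)$. Dividing by $|D_k(\lambda)|\ge\kappa_0/2$ and using $\frac{|k|^2}{\beta+|k|^2}\le 1$ then gives the claimed bound on $\widetilde{K}_k$.

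The main technical obstacle lies in the double integration by parts: each derivative of $\widehat{\mu}(kt)$ produces a factor of $|k|$, and one must verify that these are exactly compensated by the weight $e^{-(\theta_0-\theta_1)|k|t}$ after integration in $t$, so that the constant in $|\mathcal{L}[\cdot](\lambda)|\le C/\tau^2$ is uniform in $k$. This is precisely where the derivative hypothesis (H1) is used (second derivatives suffice here, which is well within $|j|\le d$). The remaining steps reduce to straightforward continuity and algebraic manipulations.
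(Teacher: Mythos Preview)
Your argument is correct and is essentially the same as the paper's. The paper does not spell out any details: it simply observes that $\bigl|\frac{|k|^2}{\beta+|k|^2}\mathcal{L}[t\widehat{\mu}(kt)](\lambda)\bigr|\le |\mathcal{L}[t\widehat{\mu}(kt)](\lambda)|$ and then defers entirely to \cite[Lemma~3.2]{GNR}, whose proof is precisely the perturbation-from-Penrose plus double-integration-by-parts argument you have written out.
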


\begin{proof}
We can do the same proof as in \cite[Lemma 3.2] {GNR} using the bound (recall that $\beta\ge0$),
\begin{equation*}
	\left| \frac{\vert k \vert^2}{\beta + \vert k \vert^2} \mathcal{L}[t \widehat{\mu} (kt)](\lambda) \right| \leq \left| \mathcal{L}[t \widehat{\mu} (kt)](\lambda) \right|.
\end{equation*}

\end{proof}

\subsection{Inverse Laplace transform of the resolvent kernel}
\label{Pointwise estimates}

Let us define the inverse Laplace transform of the resolvent kernel $\widetilde{K}_k (\lambda)$ as:
\begin{equation}
\label{K_hat}
	\widehat{K}_k (t) = \frac{1}{2 \pi i} \int_{\lbrace \Re \lambda= \gamma_0  \rbrace} \widetilde{K}_k (\lambda) e^{\lambda t} \ d\lambda,
\end{equation}
for some large constant $\gamma_0 > 0$.
Then, we have the following property:

\begin{lem}\cite[Proposition 3.3]{GNR}
	The inverse Laplace transform of the resolvent kernel $\widetilde{K}_k (\lambda)$ satisfies 
\begin{equation}
\label{inverse_resolvent}
	\vert \widehat{K}_k (t) \vert \leq C e^{- \theta_1 \vert kt \vert},
\end{equation}
where the constant $\theta_1$ comes from Lemma \ref{lemmaresolvent}.
\end{lem}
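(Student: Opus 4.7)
The plan is the standard contour-shifting argument for Bromwich-type inversion formulas, adapted to the resolvent kernel $\widetilde{K}_k$. Starting from the definition
\[
\widehat{K}_k(t) = \frac{1}{2\pi i}\int_{\{\Re \lambda = \gamma_0\}} \widetilde{K}_k(\lambda)\, e^{\lambda t}\, d\lambda,
\]
I would exploit that, by Lemma \ref{lemmaresolvent}, $\widetilde{K}_k(\lambda)$ is analytic on the whole half-plane $\{\Re \lambda \geq -\theta_1 |k|\}$ and that $|\widetilde{K}_k(\lambda)|$ decays like $(1+|k|^2+|\Im \lambda|^2)^{-1}$ on the vertical line $\Re \lambda = -\theta_1 |k|$. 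The goal is therefore to move the integration contour leftwards from $\Re \lambda = \gamma_0$ to $\Re \lambda = -\theta_1 |k|$, picking up a factor $e^{-\theta_1 |k| t}$ from $e^{\lambda t}$.

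To make this rigorous, I would apply Cauchy's theorem on the rectangle with vertices $-\theta_1 |k| \pm i T$ and $\gamma_0 \pm i T$, and then let $T \to \infty$. The analyticity of $\widetilde{K}_k$ inside this strip (Lemma \ref{lemmaresolvent}, since $\beta \geq 0$ and $\mu$ is analytic with Gaussian-type decay on the Fourier side by (H1)) ensures that no residues are picked up. For the two horizontal segments, one needs the standard Riemann-Lebesgue-type argument: from the explicit form
\[
\widetilde{K}_k(\lambda) = -\frac{\frac{|k|^2}{\beta+|k|^2}\mc L[t\widehat\mu(kt)](\lambda)}{1+\frac{|k|^2}{\beta+|k|^2}\mc L[t\widehat\mu(kt)](\lambda)}
\]
together with the analyticity and decay of $\widehat\mu$, one checks that $|\widetilde K_k(\lambda)| \to 0$ uniformly in the strip as $|\Im \lambda|\to\infty$, so the horizontal contributions vanish as $T \to \infty$. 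This is exactly the argument of \cite[Proposition 3.3]{GNR}, now carried out with the modified multiplier $\frac{|k|^2}{\beta+|k|^2}$.

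After the shift, parameterising $\lambda = -\theta_1 |k| + i\tau$ gives
\[
|\widehat{K}_k(t)| \leq \frac{e^{-\theta_1 |k| t}}{2\pi}\int_{\mathbb{R}} |\widetilde{K}_k(-\theta_1|k|+i\tau)|\, d\tau
\leq C\,e^{-\theta_1 |k| t} \int_{\mathbb{R}}\frac{d\tau}{1+|k|^2+\tau^2} \leq C\,e^{-\theta_1|k|t},
\]
where the last step uses the bound from Lemma \ref{lemmaresolvent} and the fact that $\int_{\mathbb{R}}(1+|k|^2+\tau^2)^{-1}d\tau = \pi/\sqrt{1+|k|^2}$ is uniformly bounded. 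Since $kt$ and $|k|t$ are comparable (we are on $\mathbb{Z}^d\setminus\{0\}$ with $t\geq 0$), this gives the claimed bound $|\widehat{K}_k(t)| \leq C e^{-\theta_1 |kt|}$.

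The main obstacle is the justification of the contour shift: one needs that the vertical integrals on $\Re\lambda=\gamma_0$ and on $\Re\lambda=-\theta_1|k|$ make sense (which they do by the quadratic decay in $|\Im\lambda|$), and that the side contributions vanish. Given (H1) and Lemma \ref{lemmaresolvent}, this is essentially identical to the classical (VP) case and requires only bookkeeping of the factor $|k|^2/(\beta+|k|^2)\in[0,1]$, which never makes the estimates worse.
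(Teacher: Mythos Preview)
Your proposal is correct and follows exactly the contour-shifting argument of \cite[Proposition 3.3]{GNR}, which is the reference the paper invokes; the paper itself does not supply an independent proof of this lemma but simply cites \cite{GNR}. The only modification relative to the classical (VP) case is the harmless multiplier $\frac{|k|^2}{\beta+|k|^2}\in(0,1]$, which you correctly observe does not affect any of the estimates.
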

Next, taking the inverse Laplace transform of (\ref{Lrho2}) we have the following proposition on the density,

\begin{prop}
\label{prop_rhohat3}
Let $\widehat{\rho}_k (t)$ be the unique solution of the  problem (\ref{rhohat}), then we can write $\widehat{\rho}_k (t)$ as
\begin{equation}
\label{rhohat3}
	\widehat{\rho}_{k} (t)  = \widehat{f^0}_{k,kt} +  \int_0^t \widehat{K}_k (t-s) \widehat{f^0}_{k,ks} \ ds - \int_0^t \widehat{K}_k (t-s) \widehat{h(U)}_k(s) \ ds,
\end{equation}
with $ \widehat{K}_k (t)$ satisfying (\ref{inverse_resolvent}).
\end{prop}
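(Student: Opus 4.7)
The plan is to apply the inverse Laplace transform to the identity (\ref{Lrho2}) after substituting the explicit expression (\ref{def_S_k}) for $\widehat{S}_k$. For brevity, let me denote $M_k(\lambda) := \frac{\vert k \vert^2}{\beta+\vert k \vert^2}\mathcal{L}[t\widehat{\mu}(kt)](\lambda)$, so that the definition (\ref{resolvent}) of the resolvent reads $\widetilde{K}_k(\lambda) = -\frac{M_k(\lambda)}{1+M_k(\lambda)}$, and the convolution property (\ref{Property-Laplace}) applied to (\ref{def_S_k}) yields
\[
\mathcal{L}[\widehat{S}_k](\lambda) \;=\; \mathcal{L}[F_k](\lambda) \;+\; M_k(\lambda)\,\mathcal{L}[\widehat{h(U)}_k](\lambda),
\]
where I set $F_k(t) := \widehat{f^0}_{k,kt}$.

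Next I would exploit the two trivial algebraic identities
\[
1 + \widetilde{K}_k(\lambda) \;=\; \frac{1}{1+M_k(\lambda)}, \qquad \frac{M_k(\lambda)}{1+M_k(\lambda)} \;=\; -\widetilde{K}_k(\lambda),
\]
both immediate from the definition of $\widetilde{K}_k$. Substituting into (\ref{Lrho2}) and collecting, the $M_k$-contribution to $\widehat{S}_k$ is absorbed by the factor $1+\widetilde{K}_k$ and reappears with the opposite sign, giving
\[
\mathcal{L}[\widehat{\rho}_k](\lambda) \;=\; \mathcal{L}[F_k](\lambda) \;+\; \widetilde{K}_k(\lambda)\,\mathcal{L}[F_k](\lambda) \;-\; \widetilde{K}_k(\lambda)\,\mathcal{L}[\widehat{h(U)}_k](\lambda).
\]

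Finally I would invert the Laplace transform term by term. The first summand returns $F_k(t) = \widehat{f^0}_{k,kt}$, while the two remaining products become convolutions in $t$ against the kernel $\widehat{K}_k$ of (\ref{K_hat}) via (\ref{Property-Laplace}), producing exactly
\[
\widehat{\rho}_k(t) \;=\; \widehat{f^0}_{k,kt} \;+\; \int_0^t \widehat{K}_k(t-s)\,\widehat{f^0}_{k,ks}\,ds \;-\; \int_0^t \widehat{K}_k(t-s)\,\widehat{h(U)}_k(s)\,ds,
\]
as claimed. The pointwise bound on $\widehat{K}_k$ was already established in the preceding lemma, so nothing further is needed.

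The main obstacle, and really the only one, is the analytic bookkeeping that justifies using the Laplace transform in the first place. One must verify that $\widehat{\rho}_k$, $\widehat{h(U)}_k$ and $\widehat{S}_k$ grow at most exponentially in $t$, so that each Laplace transform is well defined on a right half-plane $\{\Re\lambda > \gamma_0\}$ with $\gamma_0$ as in (\ref{K_hat}) and the inversion along that contour is legitimate. Since we work a priori with analytic initial data and a local-in-time solution in a strong norm, such exponential a priori bounds are standard, and the Fubini-type interchange producing the convolutions with $\widehat{K}_k$ is then routine.
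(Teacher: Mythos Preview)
Your proof is correct and uses essentially the same approach as the paper: both arguments rely on the Laplace-side algebraic identities $1+\widetilde{K}_k=(1+M_k)^{-1}$ and $M_k(1+M_k)^{-1}=-\widetilde{K}_k$ to collapse the $\widehat{h(U)}_k$-contribution in $\widehat{S}_k$. Your organization is slightly more direct, since you perform this simplification on the Laplace side \emph{before} inverting, whereas the paper first inverts to obtain $\widehat{\rho}_k=\widehat{S}_k+\widehat{K}_k\ast\widehat{S}_k$ and then returns to the Laplace side to verify the cancellation; the mathematical content is the same.
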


\begin{rem}
 If $f^0$ is analytic or Gevrey, the first two terms on the right-hand side of (\ref{rhohat3}) decay exponentially fast when $t$ goes to infinity. Unfortunately, without additional information on $\widehat{h(U)}_k$, we only know that the third term is bounded by some constant.
 \end{rem} 
\begin{proof}
We take the inverse Laplace transform of (\ref{Lrho2}), and we recall the property of the Laplace transform \eqref{Property-Laplace}. Then we obtain 
\begin{equation}
\label{rhohat2}
	\widehat{\rho}_k (t) = \widehat{S}_k (t) + \int_0^t \widehat{K}_k (t-s) \widehat{S}_k (s) \ ds,
\end{equation}
where $\widehat{S}_k (t)$ is given by (\ref{def_S_k}) and $\widehat{K}_k(t)$ is given by (\ref{K_hat}).  Finally, we need to show,
\begin{multline*}
	 \frac{\vert k \vert^2}{\beta + \vert k \vert^2} \int_0^t  \widehat{h(U)}_k(s) (t-s) \widehat{ \mu } (k (t - s)) \ ds\\ +  \int_0^t \widehat{K}_k (t-s)  \frac{\vert k \vert^2}{\beta + \vert k \vert^2} \int_0^s \widehat{h(U)}_k(s') (s-s') \widehat{ \mu } (k (s - s')) \ ds' \ ds \\
	 = - \int_0^t \widehat{K}_k (t-s) \widehat{h(U)}_k(s) \ ds.
\end{multline*}
Let us write $C_{k,\beta} =  \frac{\vert k \vert^2}{\beta + \vert k \vert^2}$, $\phi_1(s) = \widehat{h(U)}_k(s)$ and $\phi_2(s) = s \widehat{ \mu } (k s)$.
We want to show that
\begin{align*}
	 C_{k,\beta}  \int_0^t  \phi_1(s)  \phi_2(t - s) \ ds +  C_{k,\beta} \int_0^t \widehat{K}_k (t-s)  \int_0^s \phi_1(s') \phi_2(s - s') \ ds' \ ds = - \int_0^t \widehat{K}_k (t-s) \phi_1(s) \ ds.
\end{align*}
Taking the Laplace transform of the left-hand side, and using by the definition of resolvent kernel (\ref{resolvent}), we get
\begin{multline*}
	C_{k,\beta}  \mathcal{L}[\phi_1](\lambda) \mathcal{L}[\phi_2](\lambda) + C_{k,\beta}  \widetilde{K}_k (\lambda) \mathcal{L}[\phi_1](\lambda) \mathcal{L}[\phi_2](\lambda) \\= C_{k,\beta}  \mathcal{L}[\phi_1](\lambda) \mathcal{L}[\phi_2](\lambda) - C_{k,\beta}  \frac{C_{k,\beta}  \mathcal{L}[\phi_2](\lambda)}{1+C_{k,\beta} \mathcal{L}[\phi_2](\lambda)} \mathcal{L}[\phi_1](\lambda) \mathcal{L}[\phi_2](\lambda) \\
	= \frac{C_{k,\beta}  \mathcal{L}[\phi_1](\lambda) \mathcal{L}[\phi_2](\lambda)}{1+C_{k,\beta} \mathcal{L}[\phi_2](\lambda)} = -   \widetilde{K}_k (\lambda) \mathcal{L}[\phi_1](\lambda).
\end{multline*}
Hence, applying the inverse Laplace transform we obtain
\begin{align*}
	 C_{k,\beta}  \int_0^t  \phi_1(s)  \phi_2(t - s) \ ds  + C_{k,\beta} \int_0^t \widehat{K}_k (t-s)  \int_0^s \phi_1(s') \phi_2(s - s') \ ds' \ ds & = \mathcal{L}^{-1} \Big[ -   \widetilde{K}_k (\lambda) \mathcal{L}[\phi_1](\lambda) \Big] \\
	 & =  - \int_0^t \widehat{K}_k (t-s) \phi_1(s) \ ds.
\end{align*}
\end{proof}

\subsection{Estimate on the norm \texorpdfstring{$F[\rho](t,z)$}{Lg}}
\label{Estimate on F_rho}

We want to obtain a uniform bound on $F[\rho](t,z)$ to show the decay of the Fourier transform of the density $\rho$ as  $t$ goes to infinity.

\begin{prop}
\label{Prop_estimate_F_rho}
Assume that $\rho$ satisfies equation (\ref{rhohat3}). Then for every time $t \geq 0$ and $z \in [0, \frac{\theta_1}{2}] \cap [0,  \lambda_1]$ with $\theta_1$ defined in Lemma \ref{lemmaresolvent} and $\lambda_1$ defined in Theorem \ref{Thm1}, there holds for some constant $C>0$,
\begin{equation}
\label{estimate_F_rho}
	F[\rho](t,z) \leq  C \sqrt{\varepsilon} + C \sup_{s\leq t} \widetilde{h}\Big(C F[U](s,z)\Big).
\end{equation}
\end{prop}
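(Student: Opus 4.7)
My plan is to substitute identity \eqref{rhohat3} into the definition of $F[\rho](t,z)$ and estimate the three resulting terms separately, using (i) the pointwise decay \eqref{inverse_resolvent} of the resolvent kernel, (ii) the initial datum bound \eqref{inequaltiy_g} applied at $t=0$ together with $G[f^0](\lambda_1)\leq\varepsilon$, and (iii) Lemma~\ref{property_F_2} to dominate $F[h(U)]$ by $\widetilde{h}(CF[U])$. The first summand in \eqref{rhohat3} is a pure initial-datum contribution, whereas the other two are convolutions of the kernel against $\widehat{f^0}_{k,ks}$ and $\widehat{h(U)}_k(s)$, so the real work is in bounding these convolutions inside the $F$-norm.

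The central technical ingredient is a splitting of the weight $A_{k,kt}$ in which the $(t-s)$-factor does not carry $|k|$ inside the Japanese bracket. Concretely, I would first verify the triangle-type inequality
\[
\langle k,kt\rangle \leq \langle k,ks\rangle + \langle 0,k(t-s)\rangle,
\]
by squaring and using $2|ks|\,|k(t-s)| \leq 2\sqrt{(1+|k|^2+|ks|^2)(1+|k(t-s)|^2)}$. Combining this with sub-additivity of $x\mapsto x^\gamma$ for $\gamma\in(0,1]$ and with $(a+b)^\sigma \leq 2^\sigma a^\sigma b^\sigma$ for $a,b\geq 1$, I deduce
\[
A_{k,kt} \leq C\, A_{k,ks}\, (1+|k(t-s)|)^\sigma e^{z|k(t-s)|^\gamma}.
\]
This split is the linchpin: a naive $A_{k,kt}\leq C\, A_{k,ks}\, A_{k,k(t-s)}$ would yield, as $\tau:=t-s\to 0$, a factor $A_{k,0}\sim |k|^\sigma e^{z|k|^\gamma}$ that blows up in $|k|$ and is not absorbed by the kernel $e^{-\theta_1|k\tau|}$, which equals $1$ at $\tau=0$.

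With this splitting in hand, for $z\leq \theta_1/2$ the product $(1+|k\tau|)^\sigma e^{z|k\tau|^\gamma}e^{-\theta_1|k\tau|}$ is dominated by $C(1+|k\tau|)^\sigma e^{-\theta_1|k\tau|/2}$ (splitting the cases $|k\tau|\leq 1$ and $|k\tau|\geq 1$ and using $x^\gamma\leq x$ for $x\geq 1$ when $\gamma\leq 1$), so the change of variable $u=|k|\tau$ gives $\int_0^\infty (\cdots)\, d\tau \leq C/|k|$. For the first summand, \eqref{inequaltiy_g} evaluated at $t=0$ and $z\leq \lambda_1$ gives $A_{k,kt}|\widehat{f^0}_{k,kt}|/|k|^\alpha \leq C\sqrt{\varepsilon}$ uniformly in $k,t$. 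For the second summand, pulling the uniform bound $\sup_s A_{k,ks}|\widehat{f^0}_{k,ks}|/|k|^\alpha \leq C\sqrt{\varepsilon}$ out of the integral and estimating the remaining convolution as above gives a contribution $\leq C\sqrt{\varepsilon}/|k| \leq C\sqrt{\varepsilon}$. For the third summand, the pointwise bound $A_{k,ks}|\widehat{h(U)}_k(s)|/|k|^\alpha \leq F[h(U)](s,z)\leq \widetilde{h}(CF[U](s,z))$ from Lemma~\ref{property_F_2} (together with the monotonicity of $\widetilde{h}$) produces the $C\sup_{s\leq t}\widetilde{h}(CF[U](s,z))$ term. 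Taking $\sup_{k\neq 0}$ concludes.

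The main obstacle is the weight-splitting step: without the correct triangle inequality, one inherits a $|k|$ inside the $(t-s)$-factor that is incompatible with the behavior of $e^{-\theta_1|k\tau|}$ as $\tau\to 0$. Once the splitting is in place, the threshold $z\leq \theta_1/2$ is precisely what is needed to absorb the worst Gevrey growth (the endpoint $\gamma=1$) through the linear part of the kernel decay, while the constraint $z\leq \lambda_1$ enters only through the initial-datum bound.
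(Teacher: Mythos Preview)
Your proposal is correct and follows essentially the same route as the paper: both use the triangle inequality $\langle k,kt\rangle \leq \langle k,ks\rangle + \langle 0,k(t-s)\rangle$ to split $A_{k,kt}$ into $A_{k,ks}$ times a factor absorbed by the kernel decay $e^{-\theta_1|k|(t-s)}$ under the constraint $z\leq\theta_1/2$, then invoke Lemma~\ref{lemma_F<G} (equivalently \eqref{inequaltiy_g} at $t=0$) for the initial-datum terms and Lemma~\ref{property_F_2} for $h(U)$. The only cosmetic difference is in the bookkeeping of the time integral: the paper splits $e^{-\theta_1|k|(t-s)}$ into three pieces and retains an explicit $e^{-\frac{\theta_1}{4}(t-s)}$ inside the integral (using $|k|\geq 1$) before integrating to a constant, whereas you absorb the full kernel against the extra weight and then change variables $u=|k|\tau$ to get $C/|k|\leq C$; both yield the same conclusion.
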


\begin{proof}
First, recall the notation of the weight, $A_{k,kt}=e^{z \langle k, kt \rangle^{\gamma}}   \langle k, kt \rangle^{\sigma}$, in the generator function (\ref{GenF}). 
Then we proceed as follows: we multiply (\ref{rhohat3}) by $A_{k,kt}$, we take the absolute value, we apply the triangle inequality and we use the decay estimate (\ref{inverse_resolvent}). Then, we end up with
\begin{align*}
	A_{k,kt} \vert \widehat{\rho}_{k} (t)  \vert   \leq  A_{k,kt}  \vert \widehat{f^0}_{k,kt} \vert + C\,A_{k,kt}   \int_0^t e^{-\theta_1 \vert k \vert (t-s)} \vert \widehat{f^0}_{k,ks} \vert \ ds + C\,A_{k,kt}  \int_0^t e^{-\theta_1 \vert k \vert (t-s)} \vert\widehat{h(U)}_k(s) \vert \ ds.
\end{align*}
Next, we show that 
\begin{equation}\label{eq:est_theta_1}
	 A_{k,kt}   \int_0^t e^{-\theta_1 \vert k \vert (t-s)} \vert \widehat{f^0}_{k,ks} \vert \ ds  \leq C\int_0^t  e^{-\frac{\theta_1}{4} (t-s)} A_{k,ks} \vert \widehat{f^0}_{k,ks} \vert \ ds.
\end{equation}
We write the left-hand side as 
\begin{multline*}
	e^{z \langle k, kt \rangle^{\gamma}} \langle k, kt \rangle^{\sigma}  \int_0^t e^{-\theta_1 \vert k \vert (t-s)} \vert \widehat{f^0}_{k,ks} \vert \ ds \\
	=  \int_0^t e^{-\frac{1}{4} \theta_1 \vert k \vert (t-s)} \underbrace{e^{z \langle k, kt \rangle^{\gamma}} e^{-\frac{1}{2} \theta_1 \vert k \vert (t-s)}}_{=:\mathcal{A}}  \vert \widehat{f^0}_{k,ks} \vert \underbrace{ \langle k, kt \rangle^{\sigma} e^{-\frac{1}{4} \theta_1 \vert k \vert (t-s)}}_{=:\mathcal{B}} \ ds.
\end{multline*}
If we show that
\begin{equation*}
	\mathcal{A}:=e^{z \langle k, kt \rangle^{\gamma}} e^{-\frac{1}{2} \theta_1 \vert k \vert (t-s)} \leq e^{z \langle k, ks \rangle^{\gamma}},
\end{equation*}
and
\begin{equation*}
	\mathcal{B} := \langle k, kt \rangle^{\sigma} e^{-\frac{1}{4} \theta_1 \vert k \vert (t-s)}  \leq C \langle k, ks \rangle^{\sigma},
\end{equation*}
then we obtain \eqref{eq:est_theta_1}.
We begin with $\mathcal{A}.$ Applying the triangle inequality, $\langle k, \eta \rangle  \leq \langle k', \eta' \rangle + \langle k- k', \eta- \eta' \rangle $, we have
\begin{equation*}
	z \langle k, kt \rangle^{\gamma} - \frac{1}{2} \theta_1 \vert k \vert (t-s)  \leq z \langle k, ks \rangle^{\gamma} + z\langle 0, kt - ks \rangle^{\gamma}  - \frac{1}{2}\theta_1 \vert k \vert (t-s) \leq z \langle k, ks \rangle^{\gamma},
\end{equation*}
because $z\in [0, \frac{1}{2}\theta_1]$ and $\gamma \in (0, 1]$.
Therefore, 
\begin{equation*}
	\mathcal{A} :=e^{z \langle k, kt \rangle^{\gamma}} e^{-\frac{1}{2} \theta_1 \vert k \vert (t-s)} \leq e^{z \langle k, ks \rangle^{\gamma}}.
\end{equation*}
To evaluate $\mathcal{B}$, using $\langle k, \eta \rangle  \leq 2 \langle k', \eta' \rangle  \langle k- k', \eta- \eta' \rangle$ and the inequality $e^{- \theta x} \leq C \theta^{-\sigma} \langle x \rangle^{-\sigma}$, we get
\begin{align*}
	\mathcal{B} &= \langle k, kt \rangle^{\sigma} e^{-\frac{1}{4} \theta_1 \vert k \vert (t-s)} \leq 2^{\sigma}  \langle k, ks \rangle^{\sigma}  \langle 0, kt -ks \rangle^{\sigma} e^{-\frac{1}{4} \theta_1 \vert k \vert (t-s)}\\
	& \leq 2^{\sigma} C  \langle k, ks \rangle^{\sigma} \langle 0, kt -ks \rangle^{\sigma} \left(\frac{\theta_1}{4}\right)^{-\sigma} \langle k(t-s) \rangle^{-\sigma}\leq C \langle k, ks \rangle^{\sigma},
\end{align*}
for some constant $C \geq 0.$ Hence, we have proved \eqref{eq:est_theta_1}.

Similarly, we can show that 
\begin{equation*}
	 A_{k,kt}   \int_0^t e^{-\theta_1 \vert k \vert (t-s)} \vert \widehat{h(U)}_k(s) \vert \ ds  \leq C\int_0^t e^{-\frac{\theta_1}{4} (t-s)} A_{k,ks} \vert \widehat{h(U)}_k(s) \vert \ ds.
\end{equation*}
Therefore, we have 
\begin{align*}
	A_{k,kt} \vert \widehat{\rho}_{k} (t)  \vert  &\leq A_{k,kt}  \vert \widehat{f^0}_{k,kt} \vert + C   \int_0^t  e^{-\frac{\theta_1}{4}  (t-s)} A_{k,ks} \vert \widehat{f^0}_{k,ks} \vert \ ds + C \int_0^t e^{-\frac{\theta_1}{4} (t-s)} A_{k,ks} \vert \widehat{h(U)}_k(s) \vert \ ds.
\end{align*}
Next, multiplying by $\frac{1}{\vert k \vert^\alpha}$ and taking the supremum over $k \neq 0$, we obtain
\begin{align*}
	\sup_{k \in \mathbb{Z}^3 \setminus \{0\}} A_{k,kt} \vert \widehat{\rho}_{k} (t)  \vert  \frac{1}{\vert k \vert^\alpha}  &\leq \sup_{k \in \mathbb{Z}^3 \setminus \{0\}} A_{k,kt}  \vert \widehat{f^0}_{k,kt} \vert \frac{1}{\vert k \vert^\alpha} + C   \int_0^t  e^{-\frac{\theta_1}{4}  (t-s)} \sup_{k \in \mathbb{Z}^3 \setminus \{0\}} A_{k,ks} \vert \widehat{f^0}_{k,ks} \vert \frac{1}{\vert k \vert^\alpha}  \ ds \\
	& + C \int_0^t e^{-\frac{\theta_1}{4} (t-s)} \sup_{k \in \mathbb{Z}^3 \setminus \{0\}} A_{k,ks} \vert \widehat{h(U)}_k(s) \vert \frac{1}{\vert k \vert^\alpha} \ ds.
\end{align*}
By definition of the generator function (\ref{GenF})\footnote{By abuse of notation, we define the generator function $F[...]$ of the function $f^0(x,v)$ as
\begin{equation*}
	F[f^0](t,z) = \sup_{k \in \mathbb{Z}^3 \setminus \{0\}} A_{k,kt}  \vert \widehat{f^0}_{k,kt} \vert \frac{1}{\vert k \vert^\alpha}.
\end{equation*}} , we get
\begin{align*}
 	 F[\rho](t,z)  & \leq   F[f^0](t,z) + C   \int_0^t  e^{-\frac{\theta_1}{4}  (t-s)}  F[f^0](s,z)  \ ds + C   \int_0^t  e^{-\frac{\theta_1}{4}  (t-s)}  F[h(U)](s,z)  \ ds \\ 
 	 & \leq  C  \sqrt{\varepsilon} + C   \int_0^t  e^{-\frac{\theta_1}{4}  (t-s)}  \sqrt{\varepsilon}  \ ds + C   \int_0^t  e^{-\frac{\theta_1}{4}  (t-s)}  F[h(U)](s,z)  \ ds.
\end{align*}
The last inequality follows from Lemma \ref{lemma_F<G} and the assumption (\ref{IC}), Indeed, since  $z \leq \lambda_1 $ we have,
\begin{equation*}
	F[f^0](t,z) \leq C G[f^0]^{\frac{1}{2}}(z) \leq C G[f^0]^{\frac{1}{2}}(\lambda_1)  \leq C \sqrt{\varepsilon}.
\end{equation*}
By Lemma \ref{property_F_2}, i.e. $ F[h(U)](t,z)  \leq \widetilde{h}\Big(C F[U](t,z)\Big)$ and taking the $\sup_{s\leq t}  \widetilde{h}(CF[U](s,z))$ out of the integral we get,
\begin{align*}
 	 F[\rho](t,z)  	& \leq C \sqrt{\varepsilon} + C   \int_0^t  e^{-\frac{\theta_1}{4}  (t-s)}   \widetilde{h}\Big(C F[U] (s,z)\Big)  \ ds  \leq  C \sqrt{\varepsilon} + C \sup_{s\leq t}   \widetilde{h}\Big(C F[U](s,z)\Big),
\end{align*}
where we used 
\begin{equation*}
	\int_0^t  e^{-\frac{\theta_1}{4}  (t-s)}  \ ds \leq C,
\end{equation*}
for some constant $C$.
\end{proof}

From (\ref{estimate_F_rho}), we see that if we want to obtain a uniform bound on $F[\rho](t,z)$, we need to estimate $F[U](t,z)$. It is the goal of the next section.

\subsection{Estimate on the norm \texorpdfstring{$F[U](t,z)$}{Lg}}
\label{Estimate on F_U}

To estimate $F[U](t,z),$ we start deriving a closed equation on $\widehat{U}_k (t)$ as done in Section \ref{Equation on the density}.

\begin{lem}
We have the following equation for $\widehat{U}_k (t)$:
\begin{equation}
\label{U_hat_1}
	\widehat{U}_k (t) + \frac{\vert k \vert^2}{\beta + \vert k \vert^2} \int_0^t  \widehat{U}_k (s) (t-s) \widehat{ \mu } (k (t - s)) \ ds =  \widehat{T}_k(t),
\end{equation}
where the source term $\widehat{T}_k(t)$ is given by
\begin{equation}
\label{def_T_k}
 	\widehat{T}_k(t) := \frac{1}{\beta + \vert k \vert^2} \widehat{f^0}_{k,kt} - \frac{1}{\beta + \vert k \vert^2} \widehat{h(U)}_k (t). 
 \end{equation} 

\end{lem}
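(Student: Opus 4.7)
The plan is to derive this equation by combining the already-established integral equation \eqref{rhohat} for $\widehat{\rho}_k$ with the Fourier-transformed Poisson equation \eqref{rho_hat}. Since the linearized Vlasov equation is the same in \eqref{lpvpme} as in \eqref{nlpvpme} when viewed through the coupling, the only new ingredient is an algebraic manipulation that exchanges $\widehat{\rho}_k$ for $\widehat{U}_k$ on both sides of \eqref{rhohat}.

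Concretely, I would first recall the Poisson identity
\begin{equation*}
\widehat{\rho}_k(t) = (\vert k\vert^2+\beta)\,\widehat{U}_k(t) + \widehat{h(U)}_k(t),
\end{equation*}
which was established in \eqref{rho_hat}. Then I would substitute this expression for $\widehat{\rho}_k$ into both the left-hand side and the convolution kernel on the left-hand side of \eqref{rhohat}, obtaining
\begin{equation*}
(\vert k\vert^2+\beta)\,\widehat{U}_k(t) + \widehat{h(U)}_k(t) + \frac{\vert k\vert^2}{\beta+\vert k\vert^2}\int_0^t \Big[(\vert k\vert^2+\beta)\widehat{U}_k(s)+\widehat{h(U)}_k(s)\Big](t-s)\widehat{\mu}(k(t-s))\,ds = \widehat{S}_k(t).
\end{equation*}
Dividing through by $(\vert k\vert^2+\beta)$ yields on the left the desired quantity
\begin{equation*}
\widehat{U}_k(t) + \frac{\vert k\vert^2}{\beta+\vert k\vert^2}\int_0^t \widehat{U}_k(s)(t-s)\widehat{\mu}(k(t-s))\,ds
\end{equation*}
plus two terms involving $\widehat{h(U)}_k$, namely $\frac{1}{\vert k\vert^2+\beta}\widehat{h(U)}_k(t)$ and $\frac{1}{\vert k\vert^2+\beta}\cdot\frac{\vert k\vert^2}{\beta+\vert k\vert^2}\int_0^t \widehat{h(U)}_k(s)(t-s)\widehat{\mu}(k(t-s))\,ds$.

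Finally, I would observe that the second $\widehat{h(U)}_k$ term on the left exactly cancels the corresponding contribution in $\widehat{S}_k(t)/(\vert k\vert^2+\beta)$, recalling the explicit form of $\widehat{S}_k$ given in \eqref{def_S_k}. After this cancellation, what remains on the right-hand side is precisely
\begin{equation*}
\frac{1}{\beta+\vert k\vert^2}\widehat{f^0}_{k,kt} - \frac{1}{\beta+\vert k\vert^2}\widehat{h(U)}_k(t),
\end{equation*}
which is the definition \eqref{def_T_k} of $\widehat{T}_k(t)$. This completes the derivation of \eqref{U_hat_1}. There is no real analytic obstacle here; the entire argument is an algebraic reorganization of identities already proven, and the only point requiring care is tracking the two $\widehat{h(U)}_k$-convolution terms to confirm their cancellation.
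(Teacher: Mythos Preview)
Your proof is correct and amounts to the same algebraic substitution as the paper's, with one minor difference in starting point. The paper begins from the intermediate identity \eqref{rho_hat_aux2}, which still contains $ik\cdot\widehat{E}_k$ in the integral; substituting $ik\cdot\widehat{E}_k = \vert k\vert^2 \widehat{U}_k$ there avoids ever introducing the $\widehat{h(U)}_k$ convolution term in the integral, so no cancellation is needed. You instead start from the fully packaged equation \eqref{rhohat}, where that convolution term has already been absorbed into $\widehat{S}_k$, and then correctly observe that it cancels against the corresponding term arising from substituting $\widehat{\rho}_k = (\vert k\vert^2+\beta)\widehat{U}_k + \widehat{h(U)}_k$ inside the integral on the left. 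Both routes are valid and differ only by whether one step of Lemma~\ref{lemma_rhohat} is undone or never taken.
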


\begin{proof}
The proof starts as Lemma \ref{lemma_rhohat} until we get the equation (\ref{rho_hat_aux2}), i.e.
\begin{equation*}
	\widehat{\rho}_{k} (t)  =  \widehat{f^0}_{k,kt} - \int_0^t ik \cdot  \widehat{E}_k (s)  (t-s) \widehat{ \mu } (k(t - s)).
\end{equation*}
Then, plugging equations (\ref{E_hat}) and (\ref{rho_hat}) in (\ref{rho_hat_aux2}), we obtain a closed equation on $\widehat{U}_k (t)$,
\begin{equation*}
	(\vert k \vert^2 + \beta) \widehat{U}_k (t) + \widehat{h(U)}_k(t)  =  \widehat{f^0}_{k,kt} - \int_0^t \vert k \vert^2 \widehat{U}_k (s)  (t-s) \widehat{ \mu } (k(t - s)),
\end{equation*}
which can be rewritten as follows:
\begin{equation*}
	\widehat{U}_k (t) + \frac{\vert k \vert^2}{\beta + \vert k \vert^2} \int_0^t  \widehat{U}_k (s) (t-s) \widehat{ \mu } (k (t - s)) \ ds = \frac{1}{\beta + \vert k \vert^2} \widehat{f^0}_{k,kt} - \frac{1}{\beta + \vert k \vert^2} \widehat{h(U)}_k (t).
\end{equation*}
\end{proof}
Performing the same analysis on (\ref{U_hat_1}) as we did above for the closed equation on $\widehat{\rho}_{k} (t)$, we obtain the following lemma.
\begin{lem} 
Let $\widehat{U}_k (t)$ be a solution of (\ref{U_hat_1}) and let $\widehat{K}_k (t)$ be the inverse Laplace transform of the resolvent kernel (\ref{K_hat}). Then we can express $\widehat{U}_k (t)$ as 
\begin{equation}
\label{U_hat_2}
	\widehat{U}_k (t) = \widehat{T}_k(t) + \int_0^t \widehat{K}_k (t-s) \widehat{T}_k(s) \ ds,
\end{equation}
where $\widehat{T}_k(t)$ is given by (\ref{def_T_k})  and $ \widehat{K}_k (t)$ satisfies (\ref{inverse_resolvent}).
	
\end{lem}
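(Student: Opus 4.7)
The plan is to mirror exactly the argument used in Proposition \ref{prop_rhohat3} for $\widehat{\rho}_k$, since equation (\ref{U_hat_1}) has precisely the same convolution structure as (\ref{rhohat}), only with $\widehat{T}_k$ replacing $\widehat{S}_k$ on the right-hand side. In particular, the kernel inside the convolution on the left is the same function $\frac{\vert k \vert^2}{\beta + \vert k \vert^2}\,t\widehat{\mu}(kt)$, so the resolvent kernel produced by inverting the convolution is again $\widetilde{K}_k(\lambda)$ from (\ref{resolvent}) and its inverse Laplace transform $\widehat{K}_k$ from (\ref{K_hat}) will play the same role.

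Concretely, I would first apply the Laplace transform to (\ref{U_hat_1}) and use linearity together with the convolution identity (\ref{Property-Laplace}) to obtain
\begin{equation*}
\mathcal{L}[\widehat{U}_k](\lambda)\Bigl(1+\tfrac{\vert k\vert^2}{\beta+\vert k\vert^2}\mathcal{L}[t\widehat{\mu}(kt)](\lambda)\Bigr)=\mathcal{L}[\widehat{T}_k](\lambda).
\end{equation*}
The stability assumption (\ref{stability_cond}) guarantees that the parenthesis on the left never vanishes on $\{\Re\lambda\ge 0\}$, so I can divide and then write
\begin{equation*}
\mathcal{L}[\widehat{U}_k](\lambda)=\frac{\mathcal{L}[\widehat{T}_k](\lambda)}{1+\tfrac{\vert k\vert^2}{\beta+\vert k\vert^2}\mathcal{L}[t\widehat{\mu}(kt)](\lambda)}=\mathcal{L}[\widehat{T}_k](\lambda)+\widetilde{K}_k(\lambda)\,\mathcal{L}[\widehat{T}_k](\lambda),
\end{equation*}
where the second equality uses the algebraic identity $\frac{1}{1+a}=1-\frac{a}{1+a}$ applied with $a=\frac{\vert k\vert^2}{\beta+\vert k\vert^2}\mathcal{L}[t\widehat{\mu}(kt)](\lambda)$ and the definition (\ref{resolvent}) of $\widetilde{K}_k(\lambda)$.

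Finally, taking the inverse Laplace transform and using (\ref{Property-Laplace}) once more to turn the product $\widetilde{K}_k(\lambda)\mathcal{L}[\widehat{T}_k](\lambda)$ into a time convolution yields (\ref{U_hat_2}). The property (\ref{inverse_resolvent}) of $\widehat{K}_k$ is inherited from Lemma \ref{lemmaresolvent} and the analyticity considerations already established in Section \ref{Pointwise estimates}, so nothing new has to be proved on that front. There is no genuine obstacle here: the only subtlety is to verify that $\widehat{T}_k$ and $\widehat{U}_k$ are well-defined and of suitable growth for the Laplace transform manipulations to be justified, which follows from the a priori setting of analytic/Gevrey regularity and smallness assumed throughout Section \ref{section Linear Landau damping}.
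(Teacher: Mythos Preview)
Your proposal is correct and follows essentially the same approach as the paper: take the Laplace transform of (\ref{U_hat_1}), use the convolution property (\ref{Property-Laplace}) and the definition (\ref{resolvent}) of the resolvent kernel to arrive at $\mathcal{L}[\widehat{U}_k]=\mathcal{L}[\widehat{T}_k]+\widetilde{K}_k\,\mathcal{L}[\widehat{T}_k]$, then invert. If anything, your write-up is slightly more explicit about the algebraic identity $\frac{1}{1+a}=1-\frac{a}{1+a}$ and the justification of the Laplace manipulations, but the argument is identical in substance.
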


\begin{proof}
We take the Laplace transform of (\ref{U_hat_1}), and we get
\begin{equation*}
	\mathcal{L}[\widehat{U}_k](\lambda) + \frac{\vert k \vert^2}{\beta + \vert k \vert^2}  \mathcal{L}[t\widehat{\mu}(kt)](\lambda) \mathcal{L}[\widehat{U}_k](\lambda) = \mathcal{L}[\widehat{T}_k](\lambda).
\end{equation*}
Then we use the definition of the resolvent kernel (\ref{resolvent}) to have
\begin{equation*}
	\mathcal{L}[\widehat{U}_k](\lambda)  = \mathcal{L}[\widehat{T}_k](\lambda) +  \widetilde{K}_k (\lambda) \mathcal{L}[\widehat{T}_k](\lambda).
\end{equation*}
Finally, we apply the inverse Laplace transform to the last equality to end up with 
 \begin{equation*}
 	\widehat{U}_k (t) = \widehat{T}_k(t) + \int_0^t \widehat{K}_k (t-s) \widehat{T}_k(s) \ ds.
 \end{equation*}
\end{proof}
The next lemma gives a first bound on $F[U](t,z)$.
\begin{lem}
\label{lemma estimate F_U}
Assume  $\widehat{U}_k (t)$ satisfies (\ref{U_hat_2}). Then, for every time $t >0$ and $z \in [0, \frac{\theta_1}{2}] \cap [0,  \lambda_1]$ with $\theta_1$ defined in Lemma \ref{lemmaresolvent} and $\lambda_1$ defined in Theorem \ref{Thm1}, we have
\begin{equation}
\label{estimate F_U}
	F[U](t,z) \leq C \sqrt{\varepsilon}  + C \sup_{s \leq t} \widetilde{h}\Big(C F[U](s,z)\Big).
\end{equation}
\end{lem}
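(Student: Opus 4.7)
The plan is to mimic the proof of Proposition \ref{Prop_estimate_F_rho} verbatim, starting from the representation \eqref{U_hat_2} instead of \eqref{rhohat3}. The structural parallel is that both $\widehat{\rho}_k$ and $\widehat{U}_k$ can be written as a source term plus a convolution of the kernel $\widehat{K}_k$ with a source term, and the source term $\widehat{T}_k$ involves exactly the same two pieces ($\widehat{f^0}_{k,kt}$ and $\widehat{h(U)}_k$) that we had to deal with for $\rho$, only now multiplied by the harmless prefactor $\frac{1}{\beta+|k|^2}\le 1$ (since $\beta\ge0$ and $|k|\ge1$).

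Concretely, I would multiply \eqref{U_hat_2} by $A_{k,kt}$, take absolute values, and apply the triangle inequality to get
\begin{equation*}
A_{k,kt}|\widehat{U}_k(t)| \;\le\; A_{k,kt}|\widehat{T}_k(t)| + A_{k,kt}\int_0^t |\widehat{K}_k(t-s)|\,|\widehat{T}_k(s)|\,ds.
\end{equation*}
Plugging in the definition \eqref{def_T_k} of $\widehat{T}_k$ and the pointwise bound \eqref{inverse_resolvent}, the right-hand side is controlled by a sum of four terms, each involving either $\widehat{f^0}_{k,ks}$ or $\widehat{h(U)}_k(s)$ (with $s=t$ for the boundary pieces). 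Then I would apply the exact same weight-swap estimate as in the proof of Proposition \ref{Prop_estimate_F_rho}, namely
\begin{equation*}
A_{k,kt}\int_0^t e^{-\theta_1|k|(t-s)}|\psi_k(s)|\,ds \;\le\; C\int_0^t e^{-\frac{\theta_1}{4}(t-s)}A_{k,ks}|\psi_k(s)|\,ds,
\end{equation*}
valid for $z\in[0,\tfrac{\theta_1}{2}]$ and any $\psi$, by the splitting of the exponential and the sub-additivity of $\langle\cdot\rangle^\gamma$.

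Next I would divide by $|k|^\alpha$, take the supremum over $k\in\mathbb{Z}^3\setminus\{0\}$, and read off
\begin{equation*}
F[U](t,z) \;\le\; C\,F[f^0](t,z) + C\int_0^t e^{-\frac{\theta_1}{4}(t-s)}\bigl(F[f^0](s,z) + F[h(U)](s,z)\bigr)\,ds + C\,F[h(U)](t,z).
\end{equation*}
For the $f^0$ terms I would use Lemma \ref{lemma_F<G} together with the assumption \eqref{IC} and the fact that $z\le\lambda_1$ to get $F[f^0](s,z)\le C G[f^0]^{1/2}(\lambda_1)\le C\sqrt{\varepsilon}$. For the $h(U)$ terms I would apply Lemma \ref{property_F_2} to bound $F[h(U)](s,z)\le\widetilde{h}(CF[U](s,z))$, pull $\sup_{s\le t}\widetilde{h}(CF[U](s,z))$ out of the integral, and use $\int_0^t e^{-\theta_1(t-s)/4}ds\le C$ to absorb the convolution. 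Combining these bounds gives exactly \eqref{estimate F_U}.

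No step should be a real obstacle, since the weight-swap calculation is identical to the one carried out in full for $\rho$ and the only genuinely new ingredient is the observation that $\frac{|k|^2}{\beta+|k|^2}$ is replaced by $\frac{1}{\beta+|k|^2}\le 1$ in the source term, which is strictly favourable. The mildest subtlety is that the boundary term $\widehat{h(U)}_k(t)$ (not under the time integral) contributes $F[h(U)](t,z)$, which is already of the form $\widetilde{h}(CF[U](t,z))\le\sup_{s\le t}\widetilde{h}(CF[U](s,z))$, so it merges cleanly with the convolution contribution.
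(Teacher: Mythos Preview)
Your proposal is correct and follows essentially the same approach as the paper's own proof. The only cosmetic difference is that the paper first bounds $F[T](t,z)$ as a whole (obtaining $F[U](t,z)\le F[T](t,z)+C\int_0^t e^{-\theta_1(t-s)/4}F[T](s,z)\,ds$) and then expands $\widehat{T}_k$, whereas you expand $\widehat{T}_k$ immediately; the subsequent estimates are identical.
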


\begin{proof}
We proceed as in the proof of Proposition \ref{Prop_estimate_F_rho}.
First, using (\ref{U_hat_2}) and (\ref{inverse_resolvent}) we have
\begin{equation*}
	A_{k,kt} \vert \widehat{U}_k (t) \vert \leq  A_{k,kt} \vert  \widehat{T}_k(t) \vert + CA_{k,kt}  \int_0^te^{-\theta_1 \vert k \vert (t-s)}  \vert  \widehat{T}_k(s) \vert \ ds.
\end{equation*}
Next, we can show exactly as in the  proof of Proposition \ref{Prop_estimate_F_rho} that 
\begin{equation*}
	A_{k,kt}  \int_0^t   e^{-\theta_1 \vert k \vert (t-s)}  \vert  \widehat{T}_k(s) \vert \ ds \leq C\int_0^t  e^{-\frac{\theta_1}{4}  (t-s)} A_{k,ks}  \vert  \widehat{T}_k(s) \vert \ ds.
\end{equation*}
Therefore, we get
\begin{equation*}
	A_{k,kt} \vert \widehat{U}_k (t) \vert \leq  A_{k,kt} \vert  \widehat{T}_k(t) \vert + C\int_0^t  e^{-\frac{\theta_1}{4}  (t-s)} A_{k,ks}  \vert  \widehat{T}_k(s) \vert \ ds.
\end{equation*}
Then we multiply both side by $\frac{1}{\vert k \vert^\alpha}$ and we take the supremum over $k \neq 0$.
\begin{equation*}
	\sup_{k \in \mathbb{Z}^3 \setminus \{0\}}  A_{k,kt} \vert \widehat{U}_k (t) \vert \frac{1}{\vert k \vert^\alpha} \leq \sup_{k \in \mathbb{Z}^3 \setminus \{0\}}  A_{k,kt} \vert  \widehat{T}_k(t) \vert \frac{1}{\vert k \vert^\alpha} + C\int_0^t e^{-\frac{\theta_1}{4}  (t-s)} \sup_{k \in \mathbb{Z}^3 \setminus \{0\}}  A_{k,ks}  \vert  \widehat{T}_k(s) \vert  \frac{1}{\vert k \vert^\alpha} \ ds.
\end{equation*}
Therefore, 
\begin{equation}
\label{F_U_2}
	 F[U](t,z)   \leq  F[T](t,z) + C   \int_0^t  e^{-\frac{\theta_1}{4}  (t-s)}  F[T](s,z) \ ds.
\end{equation}
Recalling (\ref{def_T_k}), we find 
\begin{align}
\label{Bound_F_T}
	F[T](t,z)  \leq F[f^0](t,z) + F[h(U)](t,z) \leq C \sqrt{ \varepsilon} + \widetilde{h} \Big( C F[U](t,z) \Big),
\end{align}
where we have used Lemma \ref{property_F_2} to have $F[h(U)](t,z) \leq  \widetilde{h} \Big( C F[U](t,z) \Big)$ and Lemma \ref{lemma_F<G} to have, for $z \leq \lambda_1,$
\begin{equation*}
	F[f^0](t,z) \leq C  G[f^0]^{\frac{1}{2}}(z) \leq C G[f^0]^{\frac{1}{2}}(\lambda_1)  \leq C \sqrt{\varepsilon}.
\end{equation*}
Finally, plugging the bound (\ref{Bound_F_T}) in inequality (\ref{F_U_2}):
\begin{align*}
 F[U](t,z)   & \leq C  \sqrt{ \varepsilon} + \widetilde{h} \Big( C F[U](t,z) \Big) + C   \int_0^t  e^{-\frac{\theta_1}{4}  (t-s)} \left(\sqrt{ \varepsilon} + \widetilde{h} \Big( C F[U](s,z) \Big) \right) \ ds \\ 
	  & \leq C \sqrt{\varepsilon}  + C \sup_{s \leq t} \widetilde{h}\Big(C F[U](s,z)\Big).
\end{align*}
\end{proof}
In the following lemma, we first show the smallness of $F[U](t,z)$ at time $t=0$, and then, by a continuity argument, we prove that $F[U](t,z)$ remains small for all times.
\begin{lem}
\label{lemma_F_U}
Let $\widehat{U}_k (t)$ be a solution of (\ref{U_hat_2}). Then there exists a constant $C>0$ such that
\begin{equation}
\label{F_U}
	F[U](t,z) \leq  C \sqrt{\varepsilon} ,
\end{equation}
for every $z \in [0, \frac{\theta_1}{2}] \cap [0,  \lambda_1]$  and $t>0.$ 
\end{lem}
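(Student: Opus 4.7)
The plan is to close Lemma 3.5 via a continuity (bootstrap) argument, using the two inputs already at hand: the \emph{a priori} inequality
\[
F[U](t,z) \leq C\sqrt{\varepsilon} + C\sup_{s\leq t}\widetilde{h}\bigl(CF[U](s,z)\bigr)
\]
from Lemma \ref{lemma estimate F_U}, together with the smallness at $t=0$ coming from Lemma \ref{Assumptions on the initial datum} and Lemma \ref{Assumptions on the initial datum 2}, which give
\[
F[U](0,z) \leq C_1\sqrt{\varepsilon}, \qquad z\in[0,\lambda_1].
\]
Since $\widetilde{h}(x)=\mathcal{O}(x^2)$ as $x\to 0$, on any range where $F[U](s,z)\leq \delta$ with $\delta$ small we have $\widetilde{h}(CF[U](s,z))\leq C_0\delta\, F[U](s,z)$ for a constant $C_0$ that depends only on $\widetilde h$ and $C$, and then Lemma \ref{lemma estimate F_U} forces
\[
F[U](t,z)\leq C\sqrt{\varepsilon}+C_0\delta\sup_{s\leq t}F[U](s,z).
\]

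The next step is to fix $\delta$ once and for all so that $C_0\delta\leq \tfrac12$, and to introduce the monotone quantity $M(t):=\sup_{s\leq t}F[U](s,z)$, which is non-decreasing and (by continuity in $t$ of the Fourier coefficients together with the uniform-in-$k$ summability provided by the weight $A_{k,kt}$) continuous in $t$. Taking the supremum for $s\leq t$ of the previous inequality yields $M(t)\leq C\sqrt{\varepsilon}+\tfrac12 M(t)$, so
\[
M(t)\leq 2C\sqrt{\varepsilon} \quad\text{as long as}\quad M(t)\leq\delta.
\]
Choosing $\varepsilon$ so small that both $C_1\sqrt{\varepsilon}<\delta/2$ and $2C\sqrt{\varepsilon}<\delta/2$, we have $M(0)\leq C_1\sqrt{\varepsilon}<\delta$, and then the set $\{t\geq 0:M(t)\leq\delta\}$ is closed, non-empty, and the bootstrap identity $M(t)\leq 2C\sqrt{\varepsilon}<\delta$ shows it is also open in $[0,\infty)$. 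Hence it equals $[0,\infty)$, and we conclude $F[U](t,z)\leq M(t)\leq 2C\sqrt{\varepsilon}$ for every $t\geq0$ and every $z\in[0,\tfrac{\theta_1}{2}]\cap[0,\lambda_1]$.

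The main (mild) obstacle is the standard bootstrap subtlety: one must ensure that $t\mapsto F[U](t,z)$ is continuous so that the open/closed argument goes through, and one must verify that the constants $C$ in Lemma \ref{lemma estimate F_U} and $C_0$ from $\widetilde{h}(x)\lesssim x^2$ are truly independent of $t$ and $z$ in the relevant range. Both are straightforward from the uniform decay of the resolvent kernel in Lemma \ref{lemmaresolvent} and the analyticity of $h$, so the bootstrap closes and the lemma follows.
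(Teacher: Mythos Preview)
Your proof is correct and follows essentially the same route as the paper's: you combine the a~priori inequality of Lemma~\ref{lemma estimate F_U} with the initial smallness from Lemmas~\ref{Assumptions on the initial datum}--\ref{Assumptions on the initial datum 2}, exploit $\widetilde h(x)=\mathcal O(x^2)$ to absorb the nonlinear term once $F[U]\le\delta$, and close by a continuity/bootstrap argument on $M(t)=\sup_{s\le t}F[U](s,z)$. The only cosmetic difference is that you phrase the bootstrap as an open/closed connectedness argument, whereas the paper argues by contradiction with a first exit time $T^*$; these are equivalent.
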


\begin{proof}
By Lemma \ref{lemma estimate F_U} and Lemma \ref{Assumptions on the initial datum 2}, we have the following hypothesis on $F[U](t,z):$
\begin{eqnarray*}
    \left \{
    \begin{array}{l}
   F[U](t,z) \leq C \sqrt{\varepsilon}  + C\sup_{s \leq t} \widetilde{h} \Big( C F[U](s,z) \Big), \\
    F[U](0,z) \leq   C_1 \sqrt{\varepsilon}.  
    \end{array} 
    \right.
\end{eqnarray*}
Let us define $Y(t) := F[U](t,z)$ and $\overline{Y}(t) := \sup_{s \leq t} C F[U](s,z)$. Let us observe that both $\overline{Y}$ and $\widetilde{h}$ are increasing functions. Taking the supremum on both sides, we get
\begin{eqnarray}
\label{system Y(t)}
    \left \{
    \begin{array}{l}
   \overline{Y}(t) \leq  C \sqrt{\varepsilon} + C \widetilde{h} \left(\overline{Y}(t)\right), \\
    \overline{Y}(0) \leq C_1 \sqrt{\varepsilon}.
    \end{array}
    \right.
\end{eqnarray}
Since $\overline{Y}(0) \leq  C_1 \sqrt{\varepsilon} \ll 1$, given any $\delta> C_1 \sqrt{\varepsilon}$ small, by continuity there exists a time $t> 0$ such that $ \overline{Y}(t) < \delta \ll 1$. Then by the assumption: $\widetilde{h}(x) = \mathcal{O}(x^2)$ as $x$ tends to zero, we have 
\begin{equation*}
	\widetilde{h}\Big(\overline{Y}(t) \Big) \leq   C \left( \overline{Y}(t) \right)^2 \leq C_0 \delta \overline{Y}(t).
\end{equation*}
Thus, we have
$$
\overline{Y}(t) \leq  C \sqrt{\varepsilon} + C_0 \delta \overline{Y}(t).
$$ 
Choosing $\delta =  \frac{1}{2C_0} $ we get $\overline{Y}(t) \leq C \sqrt{\varepsilon} + \frac{1}{2} \overline{Y}(t)$, which implies
 \begin{equation*}
	\overline{Y}(t) \leq 2 C \sqrt{\varepsilon},
\end{equation*}
for any $t$ such that $\overline{Y}(t)<\delta.$
Now, let us assume that $T^* < \infty$ is the first time when $\overline{Y}(T^*) = \delta.$ Then, if $\sqrt{\varepsilon} < \frac{\delta}{ 2 C}$, we have 
\begin{equation*}
	 \delta = \overline{Y}(T^*) \leq 2C \sqrt{\varepsilon} < \delta,
\end{equation*}
that contradicts $T^* < \infty$, and thus implies $T^* = \infty$.
Therefore, for every time $t>0$, we have
$$
\overline{Y}(t) \leq   2 C \sqrt{\varepsilon}.
$$ 
Finally, recalling the definition of $\overline{Y}$ we proved that:
\begin{equation*}
	F[U](t,z) \leq  C \sqrt{\varepsilon}, 
\end{equation*}
for every $z \in [0, \frac{\theta_1}{2}] \cap [0,  \lambda_1]$ and $t>0$.
\end{proof}

Thanks to Lemma \ref{lemma_F_U}, we can now bound the generator function $F[\rho](t,z)$.
\begin{prop}
\label{bdd F_rho}
Assume that $\widehat{\rho}_k (t)$ satisfies equation (\ref{rhohat3}), and let $\widehat{U}_k (t)$ be a solution of (\ref{U_hat_2}). Then, there exists a constant $C>0$ such that
\begin{equation}
	F[\rho](t,z) \leq C \sqrt{\varepsilon},
\end{equation}
for every time $t \geq 0$ and $z \in [0, \frac{\theta_1}{2}] \cap [0,  \lambda_1]$ with $\theta_1$ defined in Lemma \ref{lemmaresolvent} and $\lambda_1$ defined in Theorem \ref{Thm1}.
\end{prop}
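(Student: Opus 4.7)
The plan is essentially a one-line combination of the two main estimates already proved in this section. Proposition \ref{Prop_estimate_F_rho} provides the bound
\[
F[\rho](t,z) \leq C \sqrt{\varepsilon} + C \sup_{s \leq t} \widetilde{h}\bigl(C F[U](s,z)\bigr)
\]
for every $t \geq 0$ and $z \in [0,\frac{\theta_1}{2}] \cap [0,\lambda_1]$, so the only remaining task is to dominate the second term on the right-hand side by $C\sqrt{\varepsilon}$.

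The key input is Lemma \ref{lemma_F_U}, which yields $F[U](s,z) \leq C\sqrt{\varepsilon}$ uniformly for $s \geq 0$ on the same range of $z$. Since $\widetilde{h}$ is an increasing function (its power series has nonnegative coefficients by construction), we may substitute this uniform bound inside $\widetilde{h}$ to obtain $\widetilde{h}(C F[U](s,z)) \leq \widetilde{h}(C\sqrt{\varepsilon})$ for all $s \leq t$. Using the assumption $h(x) = \mathcal{O}(x^{2})$ as $x \to 0$, which transfers to $\widetilde{h}$ since it only affects the size of the coefficients, we have $\widetilde{h}(C\sqrt{\varepsilon}) \leq C'\varepsilon$ for $\varepsilon$ sufficiently small. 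Plugging this into the estimate from Proposition \ref{Prop_estimate_F_rho} gives $F[\rho](t,z) \leq C\sqrt{\varepsilon} + C'\varepsilon$, and since $\varepsilon \leq \sqrt{\varepsilon}$ for $\varepsilon \leq 1$, we conclude $F[\rho](t,z) \leq C\sqrt{\varepsilon}$.

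There is no genuine obstacle at this stage: all the difficulty was absorbed into Lemma \ref{lemma_F_U}, where a bootstrap/continuity argument was needed to upgrade the implicit inequality $F[U] \leq C\sqrt{\varepsilon} + C\sup_s \widetilde{h}(CF[U])$ into an explicit linear bound in $\sqrt{\varepsilon}$. Once that has been established, Proposition \ref{bdd F_rho} follows by direct substitution, and one need only keep track of the fact that the admissible range of $z$ inherited from both Proposition \ref{Prop_estimate_F_rho} and Lemma \ref{lemma_F_U} is the same, namely $[0,\frac{\theta_1}{2}]\cap[0,\lambda_1]$.
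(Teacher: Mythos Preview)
Your proof is correct and follows essentially the same approach as the paper: combine Proposition~\ref{Prop_estimate_F_rho} with Lemma~\ref{lemma_F_U}, use that $\widetilde{h}$ is increasing on $\br_+$ and satisfies $\widetilde{h}(x)=\mathcal{O}(x^2)$ near zero, and absorb the resulting $C\varepsilon$ into $C\sqrt{\varepsilon}$.
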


\begin{proof}
We need to combine inequalities (\ref{estimate_F_rho}) and (\ref{F_U}), 
\begin{equation*}
	F[\rho](t,z) \leq  C \sqrt{\varepsilon} + C \sup_{s\leq t} \widetilde{h}\Big(C F[U](s,z)\Big) \leq  C \sqrt{\varepsilon} + C  \widetilde{h}\Big(C \sqrt{\varepsilon}\Big)  \leq C \sqrt{\varepsilon} +  C \varepsilon \leq C \sqrt{\varepsilon},
\end{equation*}
where we have used that $\widetilde{h}$ is increasing on $\br_+$ and $\widetilde{h}(x) = \mathcal{O}(x^2)$ as $x$ tends to zero.

\end{proof}

\subsection{Asymptotic behaviour}
\label{Asymptotic behaviour}

In this section, we show that the density  and the electric field go to zero as $t$ goes to $ +\infty$, and we prove that the solution $f$ converges weakly to a limit $f_{\infty}$. Namely, we close the proof of Landau damping for the linear system (\ref{lpvpme}).

First we give a quick recap about the values of  $\theta_0$, $\theta_1$, $\lambda_1$ and $z$. The constant $\theta_0$ is in the exponential decay of the equilibrium in Fourier space, i.e. $\vert \widehat{\mu}(\eta) \vert \leq C e^{-\theta_0 \vert \eta \vert}$.  The constant $ \theta_1$ comes from Lemma \ref{lemmaresolvent} with $\vert \widehat{K}_k (t) \vert \leq C e^{- \theta_1 \vert kt \vert}$ and $\theta_1 < \frac{1}{2} \theta_0$.   The radius of analyticity of the initial condition $f^0$ is denoted by $\lambda_1$, and we have $G[f^0](\lambda_1) \leq \varepsilon$.  The radius of analyticity of our solution is denoted by $z$. Up to this point, we have used that $z \in \left[0, \frac{\theta_1}{2}\right] \cap [0, \lambda_1]$.

Since $f^0$ is analytic or Gevrey with index\footnote[1]{The restriction on  $\gamma \in (\frac{1}{3} ,1]$ concerns only the nonlinear part. For the linear part we can take $\gamma \in (0,1].$}  $\gamma \in (0, 1]$, we know that there exist constants $C >0$ and $b >0$ such that 
\begin{equation*}
	\vert \widehat{f^0}_{k,\eta}  \vert \leq Ce^{-b \vert \eta \vert^{\gamma}}.
\end{equation*}

Proposition \ref{bdd F_rho} shows the exponential decay of the density $\rho$. Indeed, by definition of the generator function (\ref{GenF}) and the uniform bound in time, we have
\begin{equation*}
	F[\rho](t,z) =  \sup_{k \in \mathbb{Z}^3 \setminus \{0\}} e^{z \langle k, kt \rangle^{\gamma}} \vert \widehat{\rho}_k (t) \vert \langle k, kt \rangle^{ \sigma}  \frac{1}{\vert k \vert^{ \alpha }}  \leq C.
\end{equation*}
Which implies that $\vert \widehat{\rho}_k (t) \vert \leq e^{-r \langle k, kt \rangle^{\gamma}}$ for some $r \geq z$. Then, using the formula for Fourier series (\ref{reconstruction formula 1}) and the inequality $\langle k, kt \rangle^{\gamma} \geq \langle t \rangle^{\gamma} + \langle k\rangle^{\gamma}$ that holds for large $t$ and for $k \neq 0$, we get
\begin{equation*}
	\vert \rho (t,x) \vert = \left|  \frac{1}{(2\pi)^3} \sum_{k \in \mathbb{Z}^3  \setminus \{0\}} \widehat{\rho}_k (t) e^{ik \cdot x}  \right| \leq C\sum_{k \in \mathbb{Z}^3  \setminus \{0\}} e^{-r \langle k, kt \rangle^{\gamma}} \lesssim C e^{-r \langle t \rangle^{\gamma}} \sum_{k \in \mathbb{Z}^3  \setminus \{0\}} e^{-r \langle k\rangle^{\gamma}} \leq Ce^{-r \langle t \rangle^{\gamma}} 
\xrightarrow[t \rightarrow \infty]{}  0.
\end{equation*}

By Lemma \ref{lemma_F_U} we have the same result for the potential $U$. That is, $U$ decays exponentially fast to zero when $t$ goes to infinity. Hence, the electric field goes to zero as well at exponential rate. Let us write $r_0$ the rate such that $\vert \widehat{E}_k (t) \vert \leq C e^{-r_0 \langle k, kt \rangle^\gamma}$.

We can show the scattering estimate (\ref{scaterring}) in the linear case as well. Nevertheless, we refer to Section \ref{section bound_rho} for a proof of the scattering to free transport of the linear evolution. The argument is similar to  nonlinear computations, except that we do not have the second integral in (\ref{g integral}).

From the scattering estimate (\ref{scaterring}), we can deduce that $f(t,x,v)$ converges weakly to its spatial average.  Here, we prove this convergence using directly the equation on $f$ and the estimates obtain above. For this we go back to equation (\ref{equ_g_hat}), i.e. 
\begin{equation*}
	\widehat{g}_{k,\eta} (t) = \widehat{g}_{k,\eta} (0) - \int_0^t \widehat{E}_k (s) \cdot \widehat{\nabla_v \mu } (\eta - ks) \ ds.
\end{equation*}
Using $g(t,x,v)= f(t,x + vt,v)$, we can rewrite this equality in term of $f$ as
\begin{equation*}
	\widehat{f}_{k,\eta - kt} (t) = \widehat{f^0}_{k,\eta} - \int_0^t \widehat{E}_k (s) \cdot \widehat{\nabla_v \mu } (\eta - ks) \ ds.
\end{equation*}
We remark that for $k=0$ we have $\widehat{f}_{0,\eta} (t) =  \widehat{f^0}_{0,\eta}$ because $\widehat{E}_0 (s) = 0$. Therefore, $$
\langle f \rangle := \int_{\mathbb{T}^3} f(t,x,v) \ dx=\langle f^0 \rangle
$$
for all times. Indeed, using the reconstruction formula (\ref{reconstruction formula 2}) for $v$ we get
\begin{align*}
	\langle f \rangle &:= \int_{\mathbb{T}^3} f(t,x,v) \ dx =  \widehat{f}_0 (t,v)= \frac{1}{(2\pi)^3} \int_{\mathbb{R}^3} \widehat{f}_{0, \eta}(t) e^{i\eta \cdot v} \ d\eta \\
	&=\frac{1}{(2\pi)^3} \int_{\mathbb{R}^3} \widehat{f}_{0, \eta}^0 e^{i\eta \cdot v} \ d\eta =  \widehat{f^0}_{0} (v) = \langle f^0 \rangle,
\end{align*}
where $ \widehat{f}_0 (t,v)$ and $\widehat{f^0}_{0} (v)$ are Fourier transforms but only in the $x$ variable. Therefore, we can compute for  $k \neq 0$,
\begin{align*}
	\vert \widehat{f}_{k,\eta - kt} (t) \vert & \leq  \vert \widehat{f^0}_{k,\eta}  \vert + \int_0^t \vert \widehat{E}_k (s) \vert  \vert \widehat{\nabla_v \mu } (\eta - ks) \vert \ ds \\
	& \leq C e^{-b \vert \eta \vert^{\gamma}} + C  \int_0^t  e^{-r_0 \vert ks \vert^{\gamma}} e^{-\theta_0 \vert \eta - ks \vert} \ ds \\
	& \leq C e^{-b \vert \eta \vert^{\gamma}} + C  \int_0^{\infty}  e^{-r_0 \vert ks \vert^{\gamma}} e^{-\theta_0 \vert \eta - ks \vert} \ ds,
\end{align*}
where we have used the analytic or Gevrey regularity of $f^0$,   the decay of the electric field, and of the homogeneous equilibrium, next, we split the integral into two parts, $\left\lbrace \vert ks  \vert> \frac{\vert \eta \vert}{2}  \right\rbrace$ and $\left\lbrace \vert ks  \vert < \frac{\vert \eta \vert}{2}  \right\rbrace$.
\begin{align*}
	\int_0^{\infty}  e^{-r_0 \vert ks \vert^{\gamma}} e^{-\theta_0 \vert \eta - ks \vert} \ ds & =   \int_0^{\infty}  e^{-r_0 \vert ks \vert^{\gamma}} e^{-\theta_0 \vert \eta - ks \vert}  \mathds{1}_{ \left\lbrace \vert ks  \vert> \frac{\vert \eta \vert}{2}  \right\rbrace } \ ds \\
	& \qquad + \int_0^{\infty}  e^{-r_0 \vert ks \vert^{\gamma}} e^{-\theta_0 \vert \eta - ks \vert}  \mathds{1}_{ \left\lbrace \vert ks  \vert < \frac{\vert \eta \vert}{2}  \right\rbrace } \ ds \\
	& \leq \int_0^{\infty}  e^{-\frac{r_0}{2} \vert \eta \vert^{\gamma}} e^{-\theta_0 \vert \eta - ks \vert}  \ ds  + \int_0^{\infty}  e^{-r_0 \vert ks \vert^{\gamma}} e^{-\frac{\theta_0}{2} \vert \eta \vert} \ ds \\
	&  \leq C e^{-\frac{r_0}{2} \vert \eta \vert^{\gamma}} + C  e^{-\frac{\theta_0}{2} \vert \eta \vert},
\end{align*}
where we have applied the inverse triangle inequality in the first inequality for the second integral, i.e. $ \Big\vert \eta - ks \Big\vert \geq \Big\vert \vert \eta \vert - \vert ks \vert \Big\vert \geq \Big\vert \vert \eta \vert - \vert \frac{\eta}{2} \vert \Big\vert \geq \Big\vert \frac{\eta}{2} \Big\vert.$ Therefore, replacing the integral by this inequality we get
\begin{equation*}
	\vert \widehat{f}_{k,\eta - kt} (t) \vert \leq  C e^{-b \vert \eta \vert^{\gamma}} + C e^{-\frac{r_0}{2} \vert \eta \vert^{\gamma}} + C  e^{-\frac{\theta_0}{2} \vert \eta \vert} \leq C_{b, r_0, \theta_0} e^{-b \vert \eta \vert^{\gamma}},
\end{equation*}
where $C_{b,  r_0,  \theta_0}$ is a constant which depends on $b,  r_0, $ and $\theta_0$.
Hence we have for any fixed $(k, \eta) \in (\mathbb{Z}^3  \setminus \{0\}) \times \mathbb{R}^3$,
\begin{equation*}
	\vert \widehat{f}_{k,\eta} (t) \vert  \leq C_{b, r_0, \theta_0} e^{-b \vert \eta + kt \vert^{\gamma}}.
\end{equation*}
Let us define 
\begin{equation*}
	f_\infty(x,v) := f^0(x,v) - \int_0^\infty E(s,x+vs) \cdot \nabla_v \mu (v) \ ds.
\end{equation*}
Then,
\begin{equation*}
	\langle f_\infty (v) \rangle_x := \int_{\bt^d} f_\infty (x,v) \ dx = \int_{\bt^d} f^0 (x,v) \ dx,
\end{equation*}
because $\int_{\bt^d} E(x) \ dx = 0$.
Therefore, the Fourier transform of $f_\infty(x,v)$ is
\begin{equation*}
	\reallywidehat{\langle f_\infty (v) \rangle_x} (k,\eta) = \widehat{f^0}_{0,\eta} \delta_{k=0}.
\end{equation*}
Hence, for any fixed $(k, \eta) \in \mathbb{Z}^3 \times \mathbb{R}^3$, we get
\begin{align*}
	\vert \widehat{f}_{k,\eta} (t) -  \reallywidehat{\langle f_\infty (v) \rangle_x} (k,\eta) \vert  &= \vert \widehat{f}_{k,\eta} (t) -   \widehat{f^0}_{0,\eta} \delta_{k=0} \vert =  \left \{  \begin{array}{lr}
     \vert \widehat{f}_{0,\eta} (t) -  \widehat{f^0}_{0,\eta} \vert = 0 & \qquad \mbox{if} \qquad k=0 \\
    \vert \widehat{f}_{k,\eta} (t) \vert & \qquad \mbox{if} \qquad k \neq 0
    \end{array}  \right. \\
    \ \\
    & \leq C_{b, r_0, \theta_0} e^{-b \vert \eta + kt \vert^{\gamma}} \xrightarrow[t \rightarrow \infty]{} 0.
\end{align*}

\begin{prop}
\label{prop weak convergence}
Let $f$ be the solution above. Then $f$ converges weakly in $L^2 (\bt^d \times \br^d)$  to its spatial average as $t \to \infty$:
\begin{equation*}
	f(t,x,v) \xrightharpoonup[t \rightarrow \infty]{} \langle f_\infty (v) \rangle_x= \int_{\mathbb{T}^3} f^0(x,v) \ dx.
\end{equation*}
\end{prop}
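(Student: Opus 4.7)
The plan is to deduce weak $L^2$-convergence from the pointwise Fourier estimate $|\widehat{f}_{k,\eta}(t)-\reallywidehat{\langle f_\infty \rangle_x}(k,\eta)|\leq Ce^{-b|\eta+kt|^\gamma}$ already obtained in the preceding paragraph, by verifying the two standard sufficient conditions for weak convergence in a Hilbert space: (a) a uniform-in-$t$ bound on $\|f(t)-\langle f_\infty\rangle_x\|_{L^2}$, and (b) convergence of the $L^2$-pairing against every element of a dense subspace.

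For (b) I would take as dense subspace $\mathcal{D}\subset L^2(\bt^3\times\br^3)$ the trigonometric polynomials in $x$ with Schwartz coefficients in $v$, i.e.\ finite sums $\varphi(x,v)=\sum_{k\in K} e^{ik\cdot x}\varphi_k(v)$ with $K\subset\bz^3$ finite and $\varphi_k\in\mathcal{S}(\br^3)$. By Plancherel, pairing with such $\varphi$ reduces to a finite sum over $k\in K$ of $\eta$-integrals of $(\widehat{f}_{k,\eta}(t)-\reallywidehat{\langle f_\infty\rangle_x}(k,\eta))\overline{\widehat{\varphi_k}(\eta)}$. The $k=0$ contribution is identically zero since $\widehat{f}_{0,\eta}(t)=\widehat{f^0}_{0,\eta}=\reallywidehat{\langle f_\infty\rangle_x}(0,\eta)$, as already recorded in the excerpt (the force field has zero mean). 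For each $k\in K$, $k\neq0$, the pointwise estimate gives both $\widehat{f}_{k,\eta}(t)\to 0$ pointwise in $\eta$ and a uniform bound $|\widehat{f}_{k,\eta}(t)|\leq C$; hence the integrand is dominated by $C|\widehat{\varphi_k}(\eta)|\in L^1(d\eta)$, and dominated convergence closes the finite sum.

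For (a) I would revisit the derivation of the pointwise estimate to also retain decay in $k$: the hypothesis $G[f^0](\lambda_1)\leq\varepsilon$, combined with the $H^d_\eta\hookrightarrow L^\infty_\eta$ Sobolev embedding used already in Lemma \ref{lemma_F<G}, yields the full bound $|\widehat{f^0}_{k,\eta}|\lesssim e^{-b\langle k,\eta\rangle^\gamma}$, and the excerpt already established $|\widehat{E}_k(s)|\leq Ce^{-r_0\langle k,ks\rangle^\gamma}$. Rerunning the same splitting $\{|ks|>|\eta|/2\}\cup\{|ks|<|\eta|/2\}$ retains a factor $e^{-b|k|^\gamma}$ on the right-hand side and gives $|\widehat{f}_{k,\eta}(t)|\leq Ce^{-b|k|^\gamma}e^{-b|\eta+kt|^\gamma}$ for a (possibly smaller) $b>0$. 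Squaring, integrating in $\eta$ via the change of variables $\xi=\eta+kt$, and summing the resulting $Ce^{-2b|k|^\gamma}$ over $k\in\bz^3$ then provides the required uniform $L^2$ bound. The main subtlety is precisely this upgrade, since the verbatim estimate printed in the excerpt decays only in $\eta+kt$ and is not summable over $k$; an alternative that bypasses the rederivation is to invoke the linear version of the scattering estimate \eqref{scaterring} (mentioned in the excerpt as provable in the linear case), which gives $g(t)\to f_\infty$ in $G$-norm and hence in $L^2$, and since free transport is an $L^2$-isometry one concludes $\|f(t)\|_{L^2}=\|g(t)\|_{L^2}\leq C$ uniformly, closing the argument.
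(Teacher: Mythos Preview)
Your proposal is correct and follows essentially the same route as the paper: pass to the Fourier side via Plancherel, use that the $k=0$ mode is conserved, and use dominated convergence for the $k\neq0$ modes. The paper tests directly against $\varphi\in C_c^0$ and writes
\[
\langle \varphi,f(t)\rangle_{L^2}=\int_{\br^3}\widehat{\varphi}_{0,\eta}\widehat{f^0}_{0,\eta}\,d\eta+\sum_{k\neq0}\int_{\br^3}\widehat{\varphi}_{k,\eta}\widehat{f}_{k,\eta}(t)\,d\eta,
\]
declaring that the second term vanishes by dominated convergence.

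You are in fact more careful than the paper on one point. The paper does not spell out a dominating function summable over the full $k$-sum, nor does it record the uniform $L^2$ bound needed to reduce to a dense subspace. Your two-step strategy (uniform bound plus convergence on finite trigonometric polynomials in $x$) closes exactly this gap: with finitely many $k$'s the domination $|\widehat{f}_{k,\eta}(t)|\leq C$ against $|\widehat{\varphi_k}|\in L^1_\eta$ is immediate, and the uniform $L^2$ bound you obtain (either by upgrading the pointwise estimate with an extra $e^{-b|k|^\gamma}$ factor, or via the scattering estimate and the fact that free transport is an $L^2$-isometry) is what legitimises testing only on a dense set. Both of your routes for part (a) are valid; the second is the cleaner one given what the paper already has in hand.
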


\begin{proof}
Let $\varphi \in C_c^0 (\bt^d \times \br^3)$ be a given test function.  By Parseval's identity and Plancherel theorem, we have
\begin{align*}
	\langle \varphi (x,v),  f(t,x,v) \rangle_{L^2}  &= \langle  \widehat{\varphi }_{k,\eta}, \widehat{f}_{k,\eta}(t) \rangle_{L^2} = \int_{\br^3} \widehat{ \varphi}_{0,\eta}  \widehat{f^0}_{0,\eta} \ d\eta + \sum_{k \neq 0} \int_{\br^3} \widehat{\varphi }_{k,\eta} \widehat{f}_{k,\eta}(t) \ d \eta.
\end{align*}
The first term corresponds to our limit and the second term goes to zero as time goes to infinity thanks the dominated convergence Theorem and the decay of the Fourier transform.
Hence, we finally get
\begin{equation*}
	\lim_{t \rightarrow \infty} \int_{\bt^3 \times \br^3} \varphi (x,v) f(t,x,v) \ dx  \ dv =  \int_{\bt^3 \times \br^3} \varphi (x,v) \langle f_\infty (v) \rangle_x \ dx  \ dv.
\end{equation*}
\end{proof}

\section{Nonlinear Landau damping}
\label{section Nonlinear Landau damping}

Let us consider the nonlinear Vlasov equation
\begin{equation}
\label{nlpvpme2}
    \partial_t f + v \cdot \nabla_x  f  + E[f] \cdot \nabla_v (f + \mu) = 0,
\end{equation}
where the force field is given by
\begin{equation}
\label{U2}
    E = - \nabla U \quad \mbox{and} \quad - \Delta U + \beta U + h(U) = \rho.
\end{equation}
Following the characteristics of the free transport, and using again the notation $g(t,x,v) := f(t,x+vt,v)$, we get
\begin{equation*}
    \partial_t g = \partial_t f + v \cdot \nabla_x f, \quad \nabla_x g = \nabla_x f,
\end{equation*}
hence (\ref{nlpvpme2}) becomes
\begin{equation}
\label{nlequation_g}
    \partial_t g + E(t,x+vt) \cdot \nabla_v \mu = - E(t,x+vt) \cdot (\nabla_v g - t \nabla_x g).
\end{equation} 
The strategy of the proof follows \cite[ Section 4]{GNR}.  We first show a similar inequality to (4.1) in \cite{GNR} but with $\Delta U$ instead of $\rho$, i.e.
\begin{equation*}
 	\partial_t G[g(t)](z) \leq C F[\Delta U](t,z) G[g(t)]^{\frac{1}{2}} (z) + C (1+t)  F[\Delta U](t,z)  \partial_z G[g(t)] (z).
 \end{equation*} 
This inequality corresponds to \eqref{difeqG_U} below.  
Then, we show that if $F[\Delta U] (t, \lambda(t)) \leq 4C_1 \sqrt{\varepsilon}  \langle t \rangle^{- \sigma + 1}$,  with $\lambda (t) > 0$ defined later, we can bound the generator function of $g(t)$ as follows:
\begin{equation*}
	G[g(t)] (\lambda(t)) \leq 4 C_2 \varepsilon.
\end{equation*}
Secondly, we prove that if  $\sup_{t \leq T} G[g(t)] (\lambda(t)) \leq 4C_2 \varepsilon $, it implies 
\begin{equation*}
	F[\Delta U] (t, \lambda(t)) \leq 2C_1 \sqrt{\varepsilon}  \langle t \rangle^{- \sigma + 1},
\end{equation*}
for every time $t \in [0,T]$. Then by a continuity argument, we can get a uniform bounds for every time $t>0$
\begin{equation*}
	G[g(t)] (\lambda(t)) \leq 4C_2 \varepsilon,
\end{equation*}
Hence, using Lemma \ref{lemma_F<G} we can bound the generator function of $\rho$ with the one of $g$. That is,
 \begin{equation*}
 	F[\rho] (t,\lambda(t)) \leq  C G[g(t)]^{\frac{1}{2}} (\lambda(t)) \leq C \sqrt{\varepsilon}.
 \end{equation*}

\subsection{Differential inequality on  \texorpdfstring{$G[g(t)](z)$}{Lg}}
\label{Section bound_on_F[DeltaU]}
We have the following differential inequality on $G[g(t)](z)$.
\begin{prop}
\label{prop_difeqG_U}
There is a constant $C$ such that, for all $t$ and $z$, it holds 
\begin{align}
\label{difeqG_U}
\partial_t G[g(t)](z) \leq C F[\Delta U](t,z) G[g(t)]^{\frac{1}{2}} (z) + C (1+t)  F[\Delta U](t,z)  \partial_z G[g(t)] (z).
\end{align}
\end{prop}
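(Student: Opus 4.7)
The plan is to differentiate $G[g(t)](z)$ in time, substitute the Fourier transform of (\ref{nlequation_g}), and control separately the ``linear'' contribution coming from the mixing source $E(t,x+vt)\cdot\nabla_v\mu$ and the genuinely nonlinear contribution coming from $-E(t,x+vt)\cdot(\nabla_v g-t\nabla_x g)$. Since $A_{k,\eta}^2$ does not depend on $t$, differentiation under the sum and integral yields
\[
\partial_t G[g(t)](z)=2\Re\sum_{|j|\le d}\sum_{k\in\bz^3}\int_{\br^3}A_{k,\eta}^2\,\overline{\partial_\eta^j\wh g_{k,\eta}(t)}\,\partial_\eta^j\partial_t\wh g_{k,\eta}(t)\,d\eta,
\]
while the Fourier-side version of (\ref{nlequation_g}), obtained as in Section \ref{Equation on the density}, reads (up to harmless constants)
\[
\partial_t\wh g_{k,\eta}(t)=-i\wh E_k(t)\cdot(\eta-kt)\wh\mu(\eta-kt)-i\sum_{\ell\in\bz^3}\wh E_\ell(t)\cdot(\eta-kt)\,\wh g_{k-\ell,\eta-\ell t}(t).
\]

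For the linear piece I will apply Leibniz to $\partial_\eta^j$ and bound every $\eta$-derivative of $(\eta-kt)\wh\mu(\eta-kt)$ by $Ce^{-\theta_0|\eta-kt|/2}$ thanks to (\ref{Pmu1}). A change of variables $\xi=\eta-kt$ together with the elementary inequalities $\langle k,\eta\rangle\le C(\langle k,kt\rangle+\langle 0,\xi\rangle)$ and the subadditivity of $x\mapsto x^\gamma$ pull the weight $A_{k,\eta}^2$ out as $A_{k,kt}^2$ times a Schwartz-type factor in $\xi$ that is integrable against $e^{-\theta_0|\xi|/2}$. Cauchy-Schwarz in $\eta$ then controls the $k$-th summand by $|\wh E_k|A_{k,kt}\,G_k^{1/2}$, where $G_k$ is the $k$-slice of $G[g(t)](z)$, and the identity $|\wh E_k|=|\wh{\Delta U}_k|/|k|$ combined with the definition of $F[\Delta U]$ gives $|\wh E_k|A_{k,kt}\le F[\Delta U](t,z)|k|^{\alpha-1}$. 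A final Cauchy-Schwarz in $k$, whose convergence is secured by the spare weight $\sigma-\alpha>3$ inside $G_k$ (exactly as in the Schur-type argument of Lemma \ref{property_F}), delivers $|\mathcal{I}_{\mathrm{lin}}|\le CF[\Delta U](t,z)G[g(t)]^{1/2}(z)$.

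For the nonlinear piece I expand $\partial_\eta^j[(\eta-kt)\wh g_{k-\ell,\eta-\ell t}]$ by Leibniz and split $\eta-kt=(\eta-\ell t)+(\ell-k)t$. The second summand produces precisely the prefactor $t|k-\ell|$ responsible for the $(1+t)$ factor, while the first one, bounded by $|\eta-\ell t|\le\langle k-\ell,\eta-\ell t\rangle$, is absorbed into the weight that defines $\partial_z G[g]$ via the identity $\partial_z A_{k,\eta}^2=2\langle k,\eta\rangle^\gamma A_{k,\eta}^2$. The outer weight factorises as $A_{k,\eta}\le CA_{\ell,\ell t}A_{k-\ell,\eta-\ell t}$ using $\langle k,\eta\rangle\le\sqrt2\langle\ell,\ell t\rangle\langle k-\ell,\eta-\ell t\rangle$ and the subadditivity of $x\mapsto x^\gamma$. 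The factor $A_{\ell,\ell t}\wh E_\ell$, after taking the supremum over $\ell$ and using $|\wh E_\ell|=|\wh{\Delta U}_\ell|/|\ell|$, reconstructs $F[\Delta U](t,z)$; the residual $\ell$-sum converges exactly as in Lemma \ref{property_F} thanks to $\sigma-\alpha>3$. Cauchy-Schwarz in $\eta$ (after the substitution $\eta\mapsto\eta+\ell t$) with the weighted pairing $A_{k-\ell,\eta-\ell t}^2\langle k-\ell,\eta-\ell t\rangle^\gamma$ applied to $|\partial_\eta^j\wh g_{k-\ell,\eta-\ell t}|^2$ rebuilds $\partial_z G[g(t)](z)$, while the factor $\overline{\partial_\eta^j\wh g_{k,\eta}}$ is absorbed into another copy of the same quantity, yielding $|\mathcal{I}_{\mathrm{nlin}}|\le C(1+t)F[\Delta U](t,z)\partial_z G[g(t)](z)$.

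The main obstacle I anticipate is the bookkeeping of weights in the nonlinear piece: splitting $(\eta-kt)$ so that the portion not absorbable into $A_{k-\ell,\eta-\ell t}$ contributes exactly one factor of $(1+t)$, and checking that the residual polynomial loss $\langle k-\ell,\eta-\ell t\rangle^{1-\gamma}$ can be absorbed into the spare regularity (large $\sigma$) uniformly in the admissible range of $\gamma$. Summing the two contributions then yields (\ref{difeqG_U}).
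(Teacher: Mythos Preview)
Your proposal is correct and follows essentially the same approach as the paper, which does not give a detailed argument but simply refers to \cite[Proposition~4.1]{GNR}. Your outline reconstructs precisely that proof: differentiate $G[g(t)](z)$, plug in the Fourier version of \eqref{nlequation_g}, treat the linear source via the decay \eqref{Pmu1} and Cauchy--Schwarz, and treat the nonlinear term by factorising $A_{k,\eta}\le C A_{\ell,\ell t}A_{k-\ell,\eta-\ell t}$, extracting $F[\Delta U]$ from the $\widehat E_\ell$ factor, and absorbing the derivative loss $|\eta-kt|\le (1+t)\langle k-\ell,\eta-\ell t\rangle$ into $\partial_z G$.
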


\begin{proof}
This follows by the very same proof as the one of \cite[Proposition 4.1]{GNR}.
\end{proof}

\subsection{Bound on \texorpdfstring{$G[g(t)](z)$}{Lg}}
\label{section bound_g}

In this section, we show that the generator function of $g(t)$ is bounded.  Using inequality (\ref{difeqG_U}), we follow the analysis done in \cite[Section 4.3]{GNR}. 

The analyticity radius of our solution will decrease with time.  So we introduce a time dependent analyticity radius $z(t) = \lambda (t) z$ with $\lambda(t) = \lambda_0 + \lambda_0 (1 + t)^{-\delta}$ where the constant $\lambda_0$ and $\delta$ will be defined below.
We define the new generator functions $\widetilde{G}[g]$ and $\widetilde{F}[\Delta U]$ for a time dependent $z$,
\begin{equation*}
	\widetilde{G}[g(t)](z) = G[g(t)]( \lambda (t) z), \qquad \mbox{and} \qquad \widetilde{F}[\Delta U](t, z) = F[\Delta U](t, \lambda (t) z).
\end{equation*}
We note that,
\begin{equation*}
	\partial_t \widetilde{G}[g(t)](z) = \partial_t G[g(t)]( \lambda(t) z) + \partial_z G[g(t)]( \lambda(t) z)  \lambda'(t) z,
\end{equation*}
and
\begin{equation*}
	\partial_z \widetilde{G}[g(t)](z) = \partial_z G[g(t)]( \lambda(t)z)  \lambda(t).
\end{equation*}
So we can rewrite the differential inequality (\ref{difeqG_U}) with these generator functions: for $z \in [0,1]$, we have
\begin{align*}
	 \partial_t \widetilde{G}[g(t)]( z)  &  \leq C  F[\Delta U] (t, \lambda (t) z) G[g(t)]^{\frac{1}{2}} ( \lambda(t)z) + C (1+t)   F[\Delta U](t, \lambda (t) z)  \partial_z G[g(t)] (\lambda(t)z)\\
	 & \qquad \qquad + \partial_z G [g(t)]( \lambda(t) z) \ \lambda'(t) z\\
	& \leq C   \widetilde{F}[\Delta U](t, z)  \widetilde{G}[g(t)]^{\frac{1}{2}} (z) + \bigg( \frac{\lambda' (t) z + C (1+t) \widetilde{F}[\Delta U](t, z)}{\lambda (t)} \bigg)  \partial_z \widetilde{G}[g(t)] (z),
\end{align*}
that is,
\begin{align}
	\label{difeqG'_U}
\partial_t \widetilde{G}[g(t)] (z) & - \bigg( \frac{\lambda' (t) z + C (1+t)  \widetilde{F}[\Delta U](t, z) }{\lambda (t)} \bigg)  \partial_z \widetilde{G}[g(t)] (z) \leq C   \widetilde{F}[\Delta U](t, z)  \widetilde{G}[g(t)]^{\frac{1}{2}} (z).
\end{align}
We observe that (\ref{difeqG'_U}) is a transport equation on $\widetilde{G}[g]$.  If we can show that the characteristic curves are outgoing at $z=0$ and $z=1$, then we can bound $\widetilde{G}[g(t)] (z)$ for $z \in [0,1]$  by its initial value, i.e. by $\widetilde{G}[g(0)] (z) = \widetilde{G}[f^0] (z)$.

At $z=0$, we have
\begin{equation*}
	\bigg( \frac{\lambda' (t) z + C (1+t) \widetilde{F}[\Delta U](t,z)}{\lambda (t)} \bigg) \bigg\rvert_{z=0}  \geq 0,
\end{equation*}
since each term inside the brackets is nonnegative at $z=0$.

At $z=1$, we have to show that 
\begin{equation*}
	\bigg( \frac{\lambda' (t) z + C (1+t) \widetilde{F}[\Delta U](t,z)}{\lambda (t)} \bigg) \bigg\rvert_{z=1}  \leq 0.
\end{equation*}

\begin{lem}
\label{lem_F[DeltaU]}
Let $\lambda(t) = \lambda_0 + \lambda_0 (1 + t)^{-\delta}$ for some positive constant $\lambda_0$ such that $\lambda_0 \leq 1$ and $\lambda_0 \leq \frac{\lambda_1}{4}$ and for an arbitrary small positive constant $\delta \ll 1$ such that $3 \gamma > 1 + 2 \delta$, where $\gamma$ is the Gevrey index.
Assume that
\begin{equation}
\label{U_epsi}
	F[\Delta U] (t, \lambda(t)) \leq 4 C_1 \sqrt{\varepsilon}  \langle t \rangle^{- \sigma + 1}.
\end{equation}
Then
\begin{equation*}
	\frac{\lambda' (t)  + C (1+t)  F [\Delta U] (t, \lambda(t))}{\lambda (t)}   \leq 0,
\end{equation*}
provided $\sigma > 3 + \delta$.
\end{lem}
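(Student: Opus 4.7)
\medskip

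\noindent\textbf{Proof plan for Lemma \ref{lem_F[DeltaU]}.}
The plan is to reduce everything to an elementary inequality between powers of $(1+t)$. Since $\lambda(t)=\lambda_0+\lambda_0(1+t)^{-\delta}$ is bounded below by $\lambda_0>0$, dividing by $\lambda(t)$ only changes constants, so it is enough to prove
\begin{equation*}
\lambda'(t)+C(1+t)\,F[\Delta U](t,\lambda(t))\le 0.
\end{equation*}
The first term is strictly negative: a direct differentiation gives
\begin{equation*}
\lambda'(t)=-\delta\lambda_0(1+t)^{-\delta-1},
\end{equation*}
so the strategy is to show that the positive second term, which we can estimate via hypothesis \eqref{U_epsi}, stays below the size $\delta\lambda_0(1+t)^{-\delta-1}$ of $|\lambda'(t)|$.

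Using \eqref{U_epsi} together with the trivial comparison $\langle t\rangle\le C(1+t)$, I would bound
\begin{equation*}
C(1+t)\,F[\Delta U](t,\lambda(t))\le 4CC_1\sqrt{\varepsilon}\,(1+t)\langle t\rangle^{-\sigma+1}\le C'\sqrt{\varepsilon}\,(1+t)^{2-\sigma}.
\end{equation*}
Thus the required inequality $C(1+t)\,F[\Delta U](t,\lambda(t))\le -\lambda'(t)$ follows from
\begin{equation*}
C'\sqrt{\varepsilon}\,(1+t)^{2-\sigma}\le \delta\lambda_0\,(1+t)^{-\delta-1},
\end{equation*}
which, after rearrangement, is equivalent to
\begin{equation*}
\frac{C'\sqrt{\varepsilon}}{\delta\lambda_0}\le (1+t)^{\sigma-3-\delta}.
\end{equation*}

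The final step is to exploit the hypothesis $\sigma>3+\delta$: the exponent $\sigma-3-\delta$ is then strictly positive, so $(1+t)^{\sigma-3-\delta}\ge 1$ for every $t\ge 0$. Consequently the inequality reduces to $C'\sqrt{\varepsilon}/(\delta\lambda_0)\le 1$, which is satisfied as soon as $\varepsilon$ is chosen small enough (depending on $\lambda_0$, $\delta$, $C_1$, and the constant $C$ appearing in \eqref{difeqG_U}). This fixes the smallness threshold on $\varepsilon$ for the rest of the argument. I do not expect a genuine obstacle here: the content of the lemma is precisely the statement that the choices $\lambda(t)=\lambda_0+\lambda_0(1+t)^{-\delta}$ and $\sigma>3+\delta$ were tuned to make the decay rate of $|\lambda'(t)|$ compatible with the polynomial loss $(1+t)$ in the transport inequality \eqref{difeqG'_U} and the $\langle t\rangle^{-\sigma+1}$ decay coming from the bootstrap assumption \eqref{U_epsi}; the role of $\varepsilon$ is only to absorb the remaining $O(1)$ constants.
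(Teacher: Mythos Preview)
Your proposal is correct and follows essentially the same approach as the paper: reduce to the numerator, compute $\lambda'(t)=-\delta\lambda_0(1+t)^{-\delta-1}$, insert the bootstrap bound \eqref{U_epsi}, and compare powers of $(1+t)$ to arrive at the condition $\sigma>3+\delta$ with $\varepsilon$ absorbing the remaining constants. One cosmetic remark: when you invoke ``$\langle t\rangle\le C(1+t)$'' to pass from $\langle t\rangle^{-\sigma+1}$ to $(1+t)^{-\sigma+1}$, the exponent is negative, so you actually need the reverse comparability $\langle t\rangle\ge c(1+t)$; since $\langle t\rangle$ and $(1+t)$ are equivalent on $t\ge 0$ this is harmless, but state the correct direction.
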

\begin{rem}
Note that, at time $t=0$, by Lemma \ref{Assumptions on the initial datum} we have
\begin{align*}
	F[\Delta U] (0, \lambda_1) \leq C_1 \sqrt{\varepsilon} \leq  4 C_1 \sqrt{\varepsilon}.
\end{align*}
Hence, (\ref{U_epsi}) is true for small times.
\end{rem}
\begin{proof}[Proof of Lemma \ref{lem_F[DeltaU]}]Since the denominator is always positive we only have to care about the numerator.
By (\ref{U_epsi}), we get
\begin{align*}
	\lambda' (t)  + C (1+t)  F[\Delta U](t, \lambda (t)) &  \leq \lambda' (t)  + C (1+t) 4 C_1 \sqrt{\varepsilon}  \langle t \rangle^{- \sigma + 1}  \\
	&  \leq - \delta \lambda_0 (1 + t)^{-\delta - 1} + C (1 + t) \sqrt{\varepsilon} (1 + t)^{- \sigma + 1}\\
		 & = - \delta \lambda_0 (1 + t)^{-\delta - 1} + C  \sqrt{\varepsilon} (1 + t)^{- \sigma + 2}  \leq 0,
\end{align*}
provided $-\delta -1 > - \sigma + 2$, which is equivalent to $\sigma > 3 + \delta$.
\end{proof}

Next, we consider the characteristic curve $(s, z(s))$ with $s \in [0, t]$  and $(t, z(t)) =(t, z)$ and such that
\begin{equation}
\label{caracteristic curve}
	z'(s) := - \frac{\lambda' (s) z(s) + C (1+s) \widetilde{F}[\Delta U](s,z(s)) }{\lambda (s)}.
\end{equation}
\begin{figure}[!ht] 
\begin{center} 
\includegraphics[scale=2]{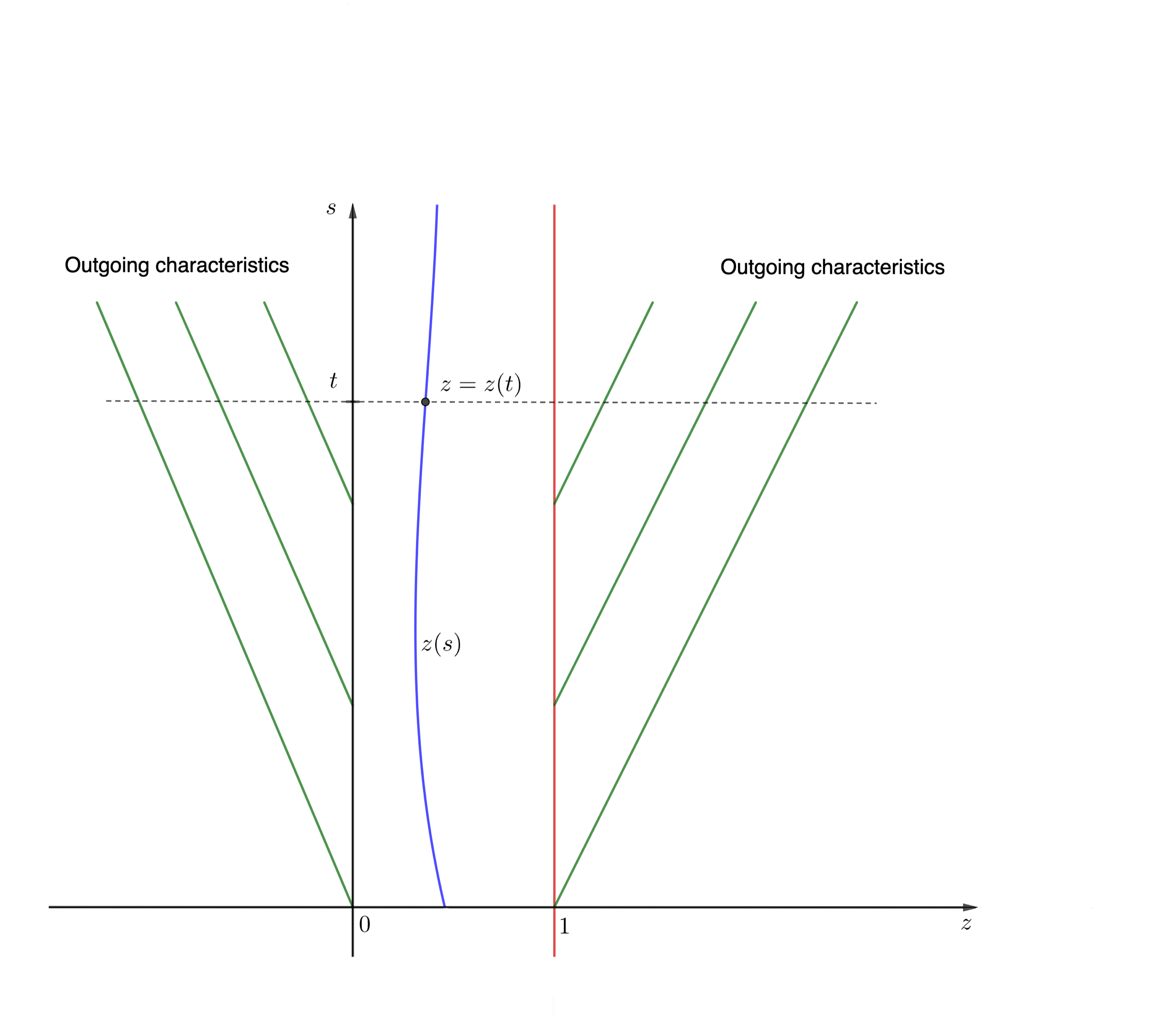} 
\label{image2}
\caption{\label{image2_texte} Scheme of the characteristic curve $z(s)$. We showed that a $z=0$ and $z=1$ the curves are outgoing.}
\end{center}
\end{figure}
See Figure \ref{image2_texte} for a scheme of the curves. Then we can compute the total derivative of $\widetilde{G}[g(s)]^{\frac{1}{2}} (z(s))$,
\begin{align*}
	\frac{d}{ds} \widetilde{G}[g(s)]^{\frac{1}{2}} (z(s)) &= \frac{1}{2} \frac{\partial_s \widetilde{G}[g(s)] (z(s)) + \partial_z \widetilde{G}[g(s)] (z(s)) \ z'(s)}{\widetilde{G}[g(s)]^{\frac{1}{2}} (z(s))} \leq C\widetilde{F}[\Delta U](s,z(s)),
\end{align*}
where we used the differential inequality (\ref{difeqG'_U}) divided by $\widetilde{G}[g(s)]^{\frac{1}{2}} (z(s))$ and definition (\ref{caracteristic curve}).
Hence, we obtain 
\begin{equation*}
	\frac{d}{ds} \widetilde{G}[g(s)]^{\frac{1}{2}} (z(s)) \leq \sup_{0 \leq y \leq 1}C  \widetilde{F}[\Delta U](s,y).
\end{equation*}
We can integrate this equation,
\begin{equation*}
	\widetilde{G}[g(t)]^{\frac{1}{2}} (z(t))\leq \widetilde{G}[g(0)]^{\frac{1}{2}} (z(0)) +C \int_0^t \sup_{0 \leq y \leq 1}   \widetilde{F}[\Delta U](s,y) \ ds,
\end{equation*}
with $z(t) = z  \in [0,1]$.
Then by definition of $\widetilde{G}$ and $\widetilde{F}$, we get
\begin{equation*}
	G[g(t)]^{\frac{1}{2}} ( \lambda(t) z(t))\leq G[g(0)]^{\frac{1}{2}} (\lambda(0) z(0)) + C \int_0^t \sup_{0 \leq y \leq 1}    F[\Delta U](s,\lambda (s) y) \ ds.
\end{equation*}
Next, we use that $z(t)=z$, $\lambda(0) = 2 \lambda_0 \leq \lambda_1$ and the fact that $\widetilde{G}$ and $\widetilde{F}$ are two increasing functions with respect to $z$. Therefore, we have for every $z \in [0,1]$,
\begin{equation*}
	G[g(t)]^{\frac{1}{2}} ( \lambda(t) z)\leq G[g(0)]^{\frac{1}{2}} (\lambda_1) + C \int_0^t   F[\Delta U](s,\lambda (s))  \ ds.
\end{equation*}
Hence, for $z=1$ and using (\ref{U_epsi}), we get the following bound on $G$
\begin{multline*}
	G[g(t)]^{\frac{1}{2}} ( \lambda(t))  \leq G[g(0)]^{\frac{1}{2}} (\lambda_1) + C \int_0^t   F[\Delta U](s,\lambda (s)) \ ds 
	 \leq G[f^0]^{\frac{1}{2}} (\lambda_1) +  C  \cdot 4 C_1 \sqrt{\varepsilon}  \int_0^t   \langle s \rangle^{- \sigma + 1} \ ds \leq 4 C_2 \sqrt{\varepsilon},
\end{multline*}
where we used assumption (\ref{IC}) and that $ \int_0^t   \langle s \rangle^{- \sigma + 1} \ ds \leq C$ for $\sigma > 3 + \delta$, and $C_2$ is a constant such that $4 C_2 > C + C \cdot 4 C_1$. Hence, this proves that \begin{equation}
\label{G_epsi_U}
	G[g(t)] (\lambda(t)) \leq  4 C_2 \varepsilon.
\end{equation}

\subsection{Bound on \texorpdfstring{$F[\Delta U](t,z)$}{Lg}}
\label{Section bound_U}

Here,  we show that if (\ref{G_epsi_U}) is true for a small $\varepsilon$, then (\ref{U_epsi}) is satisfied with a factor term equals to 2 instead of 4.  Then we can conclude our analysis using a continuity argument and obtain the smallness of the generator function of $\Delta U$ and $g$ for every $t>0$.

\begin{prop}
\label{prop_bdd_F_U}
Let $\gamma \in (\frac{1}{3},1]$,  $\sigma > 3 + \delta$ and let $\lambda_0 < \frac{\lambda_1}{4}$.  Let $T$ be the largest time such that  
\begin{equation}
\label{assumption G<2eps}
	\sup_{0 \leq t \leq T} G[g(t)](\lambda(t)) \leq 4 C_2 \varepsilon, 
\end{equation}
for some sufficiently small $\varepsilon$. Then for $t \in [0,T]$ and for some constant $C_1$, we have
\begin{equation*}
	 F[\Delta U] (t, \lambda (t))  \leq 2 C_1 \sqrt{\varepsilon}  \langle t \rangle^{- \sigma + 1}.
\end{equation*}
\end{prop}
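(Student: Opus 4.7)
My plan is to derive a nonlinear Volterra equation for $\widehat{\rho}_k(t)$ analogous to \eqref{rhohat3}, and then extract the decay $\langle t\rangle^{-\sigma+1}$ by combining the resolvent kernel estimates of Lemma \ref{lemmaresolvent}, the Gevrey regularity of $f^0$, and the bootstrap hypothesis $G[g(t)](\lambda(t))\leq 4C_2\varepsilon$.

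First I would Fourier-transform the nonlinear equation \eqref{nlequation_g}, integrate in time, evaluate at $\eta=kt$, and substitute the Poisson relation \eqref{E_hat2}. Exactly as in Lemma \ref{lemma_rhohat} and Proposition \ref{prop_rhohat3}, taking the Laplace transform and inverting via the resolvent kernel $\widehat{K}_k$ of \eqref{K_hat} produces
\begin{equation*}
  \widehat{\rho}_k(t) = \widehat{f^0}_{k,kt} + \int_0^t \widehat{K}_k(t-s)\widehat{f^0}_{k,ks}\,ds - \int_0^t \widehat{K}_k(t-s)\widehat{h(U)}_k(s)\,ds + \widehat{\mathcal{N}}_k(t),
\end{equation*}
where $\widehat{\mathcal{N}}_k(t)$ collects the direct contribution and the resolvent convolution of the new quadratic term
\begin{equation*}
  -i\int_0^t k(t-s)\sum_{\ell\in\mathbb{Z}^3\setminus\{0\}}\widehat{E}_\ell(s)\,\widehat{g}_{k-\ell,\,kt-\ell s}(s)\,ds,
\end{equation*}
which arises from $-E(t,x+vt)\cdot(\nabla_v g - t\nabla_x g)$ in \eqref{nlequation_g} (the factor $k(t-s)$ comes from $\eta-ks$ evaluated at $\eta=kt$, after the two contributions $\nabla_v g$ and $-t\nabla_x g$ have been combined).

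Next I would multiply by the weight $A_{k,kt}|k|^{-\alpha}$ at $z=\lambda(t)$ and, proceeding as in the proof of Proposition \ref{Prop_estimate_F_rho}, move the generator weight inside each integral via the kernel decay \eqref{inverse_resolvent} and the weight-transfer estimate \eqref{eq:est_theta_1}. For the initial data pieces, the assumption $G[f^0](\lambda_1)\leq\varepsilon$ and Lemma \ref{lemma_F<G} give control at radius $\lambda_1$, while the strict gap $\lambda_1-\lambda(t)\geq \lambda_1/2$ (which follows from $\lambda_0\leq\lambda_1/4$) lets one use the elementary inequality
\begin{equation*}
  e^{-(\lambda_1-\lambda(t))\langle k,kt\rangle^{\gamma}}\leq C_N\langle k,kt\rangle^{-N}
\end{equation*}
with any $N>0$, trading Gevrey exponential decay for arbitrary polynomial decay and in particular yielding $\langle t\rangle^{-\sigma+1}$ for $k\neq 0$. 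The $h(U)$ term is absorbed using Lemma \ref{property_F_2} together with the smallness of $F[U]$ inherited from Lemma \ref{lemma_F_U} (applied at radius $\lambda(t)\leq\lambda_1$) and the fact that $\widetilde{h}(x)=\mathcal{O}(x^2)$, contributing at worst a term of order $\varepsilon\langle t\rangle^{-\sigma+1}$.

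The main obstacle is the quadratic term $\widehat{\mathcal{N}}_k(t)$. At first sight the prefactor $k(t-s)$ would destroy any polynomial decay, but it can be dominated by $\langle k,kt\rangle$ and split sub-additively as $\langle k,kt\rangle\leq\langle\ell,\ell s\rangle+\langle k-\ell,kt-\ell s\rangle$, treating the two resulting pieces symmetrically. Applying the product estimate of Lemma \ref{property_F}, the pointwise inequality \eqref{inequaltiy_g} to access $\widehat{g}_{k-\ell,\,kt-\ell s}(s)$, the bound $F[\rho](s,\cdot)\leq CG[g(s)]^{1/2}(\cdot)\leq C\sqrt\varepsilon$ from Lemma \ref{lemma_F<G} applied to the bootstrap hypothesis \eqref{assumption G<2eps}, and $F[U]\leq F[\Delta U]$, one arrives at a bound for $\widehat{\mathcal{N}}_k(t)$ in the weighted norm of order $C\varepsilon\langle t\rangle^{-\sigma+1}$. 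The delicate point is that the algebraic growth $(1+t)$ produced by $k(t-s)$ must be tamed by the Gevrey gap $e^{-(\lambda_1-\lambda(t))\langle k,kt\rangle^{\gamma}}$; this is precisely where the constraint $3\gamma>1+2\delta$ from Lemma \ref{lem_F[DeltaU]}, and hence $\gamma>1/3$ in Theorem \ref{Thm1}, is used. Since $\varepsilon\ll\sqrt\varepsilon$, this nonlinear contribution is dominated by the linear one, and summing all pieces yields $F[\rho](t,\lambda(t))\leq \tfrac{3}{2}C_1\sqrt\varepsilon\langle t\rangle^{-\sigma+1}$. To conclude, I translate this into a bound on $F[\Delta U]$ exactly as in Lemma \ref{Assumptions on the initial datum}: from $|k|^2\widehat{U}_k=\tfrac{|k|^2}{\beta+|k|^2}(\widehat{\rho}_k-\widehat{h(U)}_k)$ and $F[h(U)]\leq\widetilde{h}(CF[\Delta U])$, absorbing the quadratic self-term by smallness gives $F[\Delta U](t,\lambda(t))\leq 2C_1\sqrt\varepsilon\langle t\rangle^{-\sigma+1}$ for $\varepsilon$ sufficiently small.
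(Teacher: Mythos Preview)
Your overall architecture (Fourier transform the nonlinear equation, evaluate at $\eta=kt$, pass through the resolvent kernel, then estimate each source piece with the generator weight) matches the paper's. However, the treatment of the quadratic ``reaction'' term $\widehat{\mathcal N}_k(t)$ contains a genuine gap.

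You claim that the growth from $k(t-s)$ is tamed by the \emph{static} Gevrey gap $e^{-(\lambda_1-\lambda(t))\langle k,kt\rangle^{\gamma}}$. This cannot work: to exploit that gap you would have to absorb $e^{\lambda_1\langle k,kt\rangle^{\gamma}}$ into the factors $\widehat E_\ell(s)$ and $\widehat g_{k-\ell,\,kt-\ell s}(s)$ inside the time integral, but the bootstrap hypothesis \eqref{assumption G<2eps} only controls $g(s)$ at the smaller radius $\lambda(s)\leq 2\lambda_0\leq \lambda_1/2$, not at $\lambda_1$. Moreover, Lemma~\ref{property_F} does not apply here because the echo frequency is $kt-\ell s$, not $(k-\ell)s$, so this is not a pointwise-in-$t$ product. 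The paper instead uses the \emph{dynamic} gap $\lambda(t)-\lambda(s)$: one writes $e^{\lambda(t)\langle k,kt\rangle^{\gamma}}\leq e^{(\lambda(t)-\lambda(s))\langle k,kt\rangle^{\gamma}}e^{\lambda(s)\langle\ell,\ell s\rangle^{\gamma}}e^{\lambda(s)\langle k-\ell,kt-\ell s\rangle^{\gamma}}$, packages the remaining factors into the echo kernel
\[
C_{k,\ell}(t,s)=\frac{|k|^{1-\alpha}(t-s)}{|\ell|^{1-\alpha}}\,e^{(\lambda(t)-\lambda(s))\langle k,kt\rangle^{\gamma}}\,\langle k,kt\rangle^{\sigma}\langle k-\ell,kt-\ell s\rangle^{-\sigma}\langle\ell,\ell s\rangle^{-\sigma},
\]
and invokes the plasma-echo estimate $\sup_k\sum_\ell\int_0^t C_{k,\ell}(t,s)\langle s\rangle^{-\sigma+1}\,ds\leq C\langle t\rangle^{-\sigma+1}$ from \cite[Claim (4.23)]{GNR}. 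It is \emph{this} estimate, not the static gap $\lambda_1-\lambda(t)$, that requires $3\gamma>1+2\delta$ and hence $\gamma>1/3$. The static gap is used only for the initial-data piece $\mathcal I(t)$.

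A second, smaller issue: you invoke Lemma~\ref{lemma_F_U} to bound $F[U]$, but that lemma is proved for the \emph{linear} system \eqref{lpvpme} and is not available here. The paper avoids this circularity by deriving a self-referential inequality of the form $\sup_{s\leq t}F[\Delta U](s,\lambda(s))\langle s\rangle^{\sigma-1}\leq C_1\sqrt{\varepsilon}+C\,\widetilde h\bigl(\sup_{s\leq t}CF[\Delta U](s,\lambda(s))\langle s\rangle^{\sigma-1}\bigr)$ and closing it by the same continuity argument as in \eqref{system Y(t)}.
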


\begin{proof}
We take the Fourier transform of equation (\ref{nlequation_g}),  and we obtain
\begin{equation}
\label{FT_g} \cdot
	\partial_t \widehat{g}_{k ,\eta  }(t) = - \widehat{E}_k (t) \cdot \widehat{\nabla_v \mu} (\eta - kt) - i \sum_{\ell \in \mathbb{Z}^3} (\eta -kt)\cdot \widehat{E}_{\ell} (t) \widehat{g}_{k - \ell ,\eta  - \ell t } (t).
\end{equation}
Integrating (\ref{FT_g}) in time leads to
\begin{equation*}
	\widehat{g}_{k ,\eta  } (t) - \widehat{g}_{k ,\eta  } (0) = - \int_0^t \widehat{E}_k (s) \cdot \widehat{\nabla_v \mu} (\eta - ks) \ ds  -\int_0^t  i \sum_{\ell \in \mathbb{Z}^3}  (\eta -ks) \cdot \widehat{E}_{\ell} (s) \widehat{g}_{k - \ell ,\eta  -\ell s} (s) \ ds.
\end{equation*}
Then,  by considering the Fourier mode $\eta = kt$ and recalling $\widehat{g}_{k,kt} (t) = \widehat{\rho}_k (t)$ and (\ref{E_mu}),
 we obtain
\begin{align*}
\widehat{\rho}_k (t) - \widehat{f^0}_{k,kt}  = - \int_0^t \widehat{E}_k (s)  \cdot ik(t-s) \widehat{ \mu } (k(t - s)) \ ds  - \int_0^t i \sum_{\ell \in \mathbb{Z}^3} (k(t -s)) \cdot \widehat{E}_{\ell} (s) \widehat{g}_{k - \ell ,kt  - \ell s} (s) \ ds.
\end{align*}
Therefore, using (\ref{E_hat}) and (\ref{rho_hat}), we have 
\begin{multline}
\label{U_hat_nonlinear}
	 \widehat{U}_k(t) + \frac{1}{\beta + \vert k \vert^2} \int_0^t \vert k \vert^2  \widehat{U}_{k}(s) (t-s)   \widehat{ \mu } (k(t - s)) \ ds = \frac{1}{\beta + \vert k \vert^2}\widehat{f^0}_{k,kt} - \frac{1}{\beta + \vert k \vert^2}  \widehat{h(U)}_k (t)  \\
	 - \frac{1}{\beta + \vert k \vert^2}  \int_0^t \sum_{\ell \in \mathbb{Z}^3 \setminus \{0\}} k(t  - s) \cdot  \ell \widehat{U}_{\ell}(s)  \widehat{g}_{k - \ell ,kt  - \ell s} (s) \ ds.
\end{multline}
We define 
\begin{align}
\label{T_hat}
	\widehat{T}_k(t) &:= \frac{1}{\beta + \vert k \vert^2}\widehat{f^0}_{k,kt} - \frac{1}{\beta + \vert k \vert^2}  \widehat{h(U)}_k (t) -   \frac{1}{\beta + \vert k \vert^2} \int_0^t \sum_{\ell \in \mathbb{Z}^3 \setminus \{0\}}  k(t  - s) \cdot  \ell \widehat{U}_{\ell}(s)   \widehat{g}_{k - \ell ,kt  - \ell s} (s) \ ds.
\end{align}
Since we want to bound $F[\Delta U]$, we multiply (\ref{U_hat_nonlinear}) by $\vert k \vert^2$:
\begin{align*}
	 \vert k \vert^2 \widehat{U}_k (t) + \frac{\vert k \vert^2}{\beta + \vert k \vert^2} \int_0^t \vert k \vert^2 \widehat{U}_{k}(s) (t-s)   \widehat{ \mu } (k(t - s)) \ ds &=\vert k \vert^2 \widehat{T}_k (t).
\end{align*}
We observe that we have an equation similar to (\ref{U_hat_1}) as we had for the linear case, except that our source term is more complicated.  Nevertheless, doing the same analysis as we did in the linear case we can obtain a similar equation to (\ref{F_U_2}), i.e.
\begin{equation*}
	F[\Delta U](t,z)   \leq F[\Delta T](t,z)  + C   \int_0^t  e^{-\frac{\theta_1}{4}  (t-s)}  F[\Delta T](s,z)  \ ds.
\end{equation*}
Then for $z= \lambda(t)$ and using the fact that the generator function is increasing in $z$ and for $s<t$ we have $\lambda(t) < \lambda(s)$,  we get  
\begin{equation}
\label{F_k^2 U}
	F[\Delta U](t,\lambda(t))  \leq  F[\Delta T](t,\lambda(t))  + C  \int_0^t  e^{-\frac{\theta_1}{4}  (t-s)}  F[\Delta T](s,\lambda(s))  \ ds.
\end{equation}
We need the following result to evaluate the right-hand side of (\ref{F_k^2 U}).
\begin{lem}
\label{lem F[k^2 T]}
With the same assumptions as in Proposition \ref{prop_bdd_F_U}, there holds
\begin{align}
\label{F_k^2 T}
	F[\Delta T](t,\lambda(t)) \leq C \sqrt{ \varepsilon } e^{- \frac{\lambda_1}{2} \langle t \rangle^{\gamma}}  +  C  F[h(U)](t,\lambda(t)) + C \sqrt{ \varepsilon }  \langle t \rangle^{- \sigma + 1}  \sup_{s\leq t} F[\Delta U](s,\lambda(s)) \langle s \rangle^{\sigma - 1}.
\end{align}
\end{lem}

\begin{proof}First, we multiply (\ref{T_hat}) by $\vert k \vert^2$,  take the absolute value and rewrite the third term as follow:
\begin{align*}
	\vert k \vert^2 \vert  \widehat{T}_k(t) \vert & \leq \frac{\vert k \vert^2}{\beta + \vert k \vert^2} \vert \widehat{f^0}_{k,kt} \vert + \frac{\vert k \vert^2}{\beta + \vert k \vert^2} \vert  \widehat{h(U)}_k (t) \vert +   \frac{\vert k \vert^2}{\beta + \vert k \vert^2} \int_0^t \sum_{\ell \in \mathbb{Z}^3 \setminus \{0\}}  \frac{ \vert k \vert(t  - s)}{\vert \ell \vert} \vert \ell \vert^2  \vert  \widehat{U}_{\ell}(s) \vert  \vert \widehat{g}_{k - \ell ,kt  - \ell s} (s)\vert \ ds.
\end{align*}
Next,  we use the notation $A_{k,\eta} (t) =  e^{\lambda (t) \langle k, \eta \rangle^{\gamma}} \langle k, \eta \rangle^{\sigma}$ and we compute $F[\Delta T](t,\lambda (t))$,
\begin{multline*}
	 \sup_{k \in \mathbb{Z}^3 \setminus \{0\}}  A_{k,kt} (t) \vert k \vert^2  \vert  \widehat{T}_k (t) \vert \frac{1}{\vert k \vert^{ \alpha }}   \leq  \sup_{k \in \mathbb{Z}^3 \setminus \{0\}}  A_{k,kt} (t) \vert \widehat{f^0}_{k,kt} \vert \frac{1}{\vert k \vert^{ \alpha }} +  \sup_{k \in \mathbb{Z}^3 \setminus \{0\}}  A_{k,kt} (t)   \vert \widehat{h(U)}_k (t) \vert \frac{1}{\vert k \vert^{ \alpha }} \\
	   +   \sup_{k \in \mathbb{Z}^3 \setminus \{0\}}  A_{k,kt} (t) \frac{1}{\vert k \vert^{ \alpha }}  \int_0^t \sum_{\ell \in \mathbb{Z}^3 \setminus \{0\}}  \frac{\vert k \vert(t  - s)}{\vert \ell \vert} \vert \ell \vert^2 \vert \widehat{U}_{\ell}(t)  \vert \vert \widehat{g}_{k - \ell ,kt  - \ell s} (s) \vert \ ds  \\
	 =: \mathcal{I}(t) + \mathcal{H}(t) + \mathcal{R}(t).
\end{multline*}
We start by evaluating $\mathcal{I}(t)$.  Since $\lambda(t) \leq 2 \lambda_0 < \frac{\lambda_1}{2}$ we have 
\begin{equation*}
	e^{\lambda(t) \langle k, kt \rangle^{\gamma}} = e^{\lambda_1 \langle k, kt \rangle^{\gamma}} e^{-(\lambda_1 - \lambda (t)) \langle k, kt \rangle^{\gamma}} \leq e^{\lambda_1 \langle k, kt \rangle^{\gamma}}  e^{-(\lambda_1 - \lambda (t)) \langle t \rangle^{\gamma}} \leq e^{\lambda_1 \langle k, kt \rangle^{\gamma}}  e^{-\frac{\lambda_1 }{2} \langle t \rangle^{\gamma}}.
\end{equation*}
Therefore, we can bound $\mathcal{I}(t)$ as follows:
\begin{multline*}
	\mathcal{I}(t)  := \sup_{k \in \mathbb{Z}^3 \setminus \{0\}} e^{\lambda (t) \langle k, kt \rangle^{\gamma}}   \vert \widehat{f^0}_{k,kt} \vert   \langle k, kt \rangle^{ \sigma} \frac{1}{\vert k \vert^{ \alpha }} \leq  e^{- \frac{\lambda_1}{2} \langle t \rangle^{\gamma}}  \sup_{k \in \mathbb{Z}^3 \setminus \{0\}} e^{ \lambda_1 \langle k, kt \rangle^{\gamma}}  \vert \widehat{f^0}_{k,kt} \vert   \langle k, kt \rangle^{\sigma} \frac{1}{\vert k \vert^{ \alpha }} \\
	\leq  e^{- \frac{\lambda_1}{2} \langle t \rangle^{\gamma}}  F[f^0](t,\lambda_1)   \leq C e^{-  \frac{\lambda_1}{2} \langle t \rangle^\gamma}  G[f^0]^\frac{1}{2}(\lambda_1) \leq C \sqrt{\varepsilon} e^{- \frac{\lambda_1}{2} \langle t \rangle^{\gamma}},
\end{multline*}
where we applied Lemma \ref{lemma_F<G} for the penultimate inequality and assumption (\ref{IC}) for the last one.

For $ \mathcal{H}(t) $, we  have 
\begin{equation*}
	\mathcal{H}(t) :=  \sup_{k \in \mathbb{Z}^3 \setminus \{0\}}  A_{k,kt} (t)   \vert \widehat{h(U)}_k (t) \vert \frac{1}{\vert k \vert^{ \alpha }}    =  F[h(U)](t,\lambda(t)). 
\end{equation*}

The last term $\mathcal{R}(t)$ is a bit more complicated. We follow the same analysis done in \cite{GNR} to treat this term.  This give us
\begin{align*}
	\mathcal{R}(t)  : & =  \sup_{k \in \mathbb{Z}^3  \setminus \{0\}} A_{k,kt} (t)\frac{1}{\vert k \vert^{ \alpha }}  \int_0^t  \sum_{\ell \in \mathbb{Z}^3 \setminus \{0\}} \frac{\vert k \vert (t  - s)}{\vert \ell \vert} \vert \ell \vert^2 \left| \widehat{U}_{\ell} (s)  \right| \vert  \widehat{g}_{k - \ell ,kt  - \ell s} (s) \vert \ ds  \\
	 &=  \sup_{k \in \mathbb{Z}^3  \setminus \{0\}}  \int_0^t \sum_{\ell \in \mathbb{Z}^3 \setminus \{0\}}  e^{\lambda(t) \langle k,  kt \rangle^{\gamma}} \langle k, kt \rangle^{\sigma}  \frac{\vert k \vert^{1 - \alpha} (t  - s)}{\vert \ell \vert} \vert \ell \vert^2 \left|  \widehat{U}_{\ell} (s)  \right| \vert  \widehat{g}_{k - \ell ,kt  - \ell s} (s) \vert \ ds  \\
	 & \leq  \sup_{k \in \mathbb{Z}^3  \setminus \{0\}}   \int_0^t \sum_{\ell \in \mathbb{Z}^3 \setminus \{0\}}  C_{k, \ell} (t,s)  e^{\lambda(s) \langle \ell,  \ell s \rangle^{\gamma}} \vert \ell \vert^2 \vert  \widehat{U}_{\ell} (s) \vert  \langle \ell,  \ell s \rangle^{\sigma} \frac{1}{\vert \ell \vert^\alpha}   \\
	& \qquad \times e^{\lambda(s) \langle k - \ell,  kt - \ell s \rangle^{\gamma}} \vert  \widehat{g}_{k - \ell ,kt  - \ell s} (s) \vert \langle k - \ell,  kt - \ell s \rangle^{\sigma} \ ds ,
\end{align*}
where we have used 
\begin{align*}
	e^{\lambda(t) \langle k,  kt \rangle^{\gamma}} & \leq e^{(\lambda(t) - \lambda(s)) \langle k,  kt \rangle^{\gamma}} e^{\lambda(s) \langle k,  kt \rangle^{\gamma}} \\
	& \leq e^{(\lambda(t) - \lambda(s)) \langle k,  kt \rangle^{\gamma}} e^{\lambda(s) \langle \ell,  \ell s \rangle^{\gamma}} e^{\lambda(s) \langle k - \ell,  kt - \ell s \rangle^{\gamma}},
\end{align*}
and 
\begin{equation*}
	C_{k, \ell} (t,s) := \frac{\vert k \vert^{1 - \alpha} (t-s)}{\vert \ell \vert^{1 - \alpha}} e^{(\lambda(t) - \lambda(s)) \langle k,  kt \rangle^{\gamma}} \langle k, kt \rangle^{\sigma} \langle k - \ell,  kt - \ell s \rangle^{-\sigma} \langle \ell,  \ell s \rangle^{-\sigma}.
\end{equation*}
Then, using inequality (\ref{inequaltiy_g}), we have 
\begin{align*}
	\mathcal{R}(t) &  \leq  \sup_{k \in \mathbb{Z}^3  \setminus \{0\}}   \int_0^t \sum_{\ell \in \mathbb{Z}^3 \setminus \{0\}}  C_{k, \ell} (t,s)  e^{\lambda(s) \langle \ell,  \ell s \rangle^{\gamma}} \vert \ell \vert^2 \vert \widehat{U}_{\ell} (s) \vert  \langle \ell,  \ell s \rangle^{\sigma} \frac{1}{\vert \ell \vert^\alpha}   \\
	& \qquad \qquad \qquad \times \sup_{\eta \in \br^d}  e^{\lambda(s) \langle k - \ell,  \eta \rangle^{\gamma}} \vert  \widehat{g}_{k - \ell ,\eta} (s) \vert \langle k - \ell,  \eta \rangle^{\sigma} \ ds \\
	&  \leq   C  \sup_{k \in \mathbb{Z}^3  \setminus \{0\}}   \int_0^t \sup_{\ell \in \mathbb{Z}^3  \setminus \{0\}} \left( e^{\lambda(s) \langle \ell,  \ell s \rangle^{\gamma}} \vert \ell \vert^2 \vert \widehat{U}_{\ell} (s) \vert  \langle \ell,  \ell s \rangle^{\sigma} \frac{1}{\vert \ell \vert^\alpha} \right) G[g(s)]^\frac{1}{2}(\lambda(s))  \sum_{\ell \in \mathbb{Z}^3 \setminus \{0\}}  C_{k, \ell} (t,s)     \ ds.
\end{align*}
Therefore, using the definition of the generator function (\ref{GenF}), we obtain
\begin{align*}
	\mathcal{R}(t) &  \leq   C  \sup_{k \in \mathbb{Z}^3  \setminus \{0\}}   \int_0^t F[\Delta U] (s, \lambda(s)) G[g(s)]^\frac{1}{2}(\lambda(s))  \sum_{\ell \in \mathbb{Z}^3 \setminus \{0\}}  C_{k, \ell} (t,s)   \ ds \\
	 & \leq C \sup_{s \leq t}  G[g(s)]^\frac{1}{2}(\lambda(s)))  \sup_{k \in \mathbb{Z}^3  \setminus \{0\}}   \int_0^t F[\Delta U] (s, \lambda(s)) \frac{\langle s \rangle^{\sigma - 1} }{\langle s \rangle^{\sigma - 1} } \sum_{\ell \in \mathbb{Z}^3 \setminus \{0\}}  C_{k, \ell} (t,s)   \ ds \\
	&  \leq 4 C_2 \sqrt{\varepsilon} \left( \sup_{s \leq t} F[\Delta U] (s, \lambda(s)) \langle s \rangle^{\sigma - 1} \right)  \sup_{k \in \mathbb{Z}^3  \setminus \{0\}}   \sum_{\ell \in \mathbb{Z}^3 \setminus \{0\}} \int_0^t C_{k, \ell} (t,s)  \langle s \rangle^{ - \sigma + 1}  \ ds,
\end{align*}
where we used assumption (\ref{assumption G<2eps}) and Fubini to swap the sum and the integral.
Finally, using \cite[Claim (4.23)]{GNR}, i.e. 
\begin{align*}
	\sup_{k \in \mathbb{Z}^3 \setminus \{0\}}   \sum_{\ell \in \mathbb{Z}^3 \setminus \{0\}} \int_0^t C_{k, \ell} (t,s) \langle s \rangle^{- \sigma + 1} \ ds  \leq C \langle t \rangle^{- \sigma + 1},
\end{align*}
we get
\begin{align*}
	\mathcal{R}(t) & \leq 4 C \cdot C_2 \sqrt{\varepsilon} \left( \sup_{s \leq t} F[\Delta U] (s, \lambda(s)) \langle s \rangle^{\sigma - 1} \right)  \langle t \rangle^{- \sigma + 1}.
\end{align*}
\cite[Claim (4.23)]{GNR} corresponds to the analysis of the plasma echoes. It is precisely in the analysis of the kernel $C_{k.l}(t,s)$ that we obtain the condition on the Gevrey index $\gamma > \frac{1}{3}$.

Hence,  with the estimates on $\mathcal{I}(t)$, $ \mathcal{H}(t) $ and $\mathcal{R}(t) $ the result of the lemma follows
\begin{multline*}
 	F[\Delta T](t,\lambda (t))  \leq \mathcal{I}(t) + \mathcal{H}(t) + \mathcal{R}(t) \\
 	 \leq C \sqrt{\varepsilon} e^{- \frac{ \lambda_1}{2}  \langle t \rangle^\gamma} + C  F[h(U)](t,\lambda(t))  + 4 C \cdot C_2 \sqrt{\varepsilon} \left( \sup_{s \leq t} F[\Delta U] (s, \lambda(s)) \langle s \rangle^{\sigma - 1} \right)  \langle t \rangle^{- \sigma + 1} .
 \end{multline*} 
\end{proof}
Let us finish the proof of Proposition \ref{prop_bdd_F_U}.  Combining (\ref{F_k^2 U}) and (\ref{F_k^2 T}) we have,
\begin{multline}
\label{F_Delta U aux}
	 F[\Delta U](t,\lambda(t))   \leq \left(C \sqrt{\varepsilon} e^{- \frac{ \lambda_1}{2}  \langle t \rangle^\gamma} + C  F[h(U)](t,\lambda(t))  + 4 C \cdot C_2 \sqrt{\varepsilon} \left( \sup_{s \leq t} F[\Delta U] (s, \lambda(s)) \langle s \rangle^{\sigma - 1} \right)  \langle t \rangle^{- \sigma + 1} \right) \\
	+ C   \int_0^t  e^{-\frac{\theta_1}{4}  (t-s)} \left(  C \sqrt{\varepsilon} e^{- \frac{ \lambda_1}{2}  \langle s \rangle^\gamma} + C  F[h(U)](s,\lambda(s))  + 4 C \cdot C_2 \sqrt{\varepsilon} \left( \sup_{u \leq s} F[\Delta U] (u, \lambda(u)) \langle u \rangle^{\sigma - 1} \right)  \langle s \rangle^{- \sigma + 1}  \right) \ ds  \\
	 \leq C \sqrt{\varepsilon} e^{-\nu t^\gamma}    + 4 C \cdot C_2 \sqrt{\varepsilon} \langle t \rangle^{-\sigma + 1}  \sup_{s \leq t} F[\Delta U](s,\lambda(s)) \langle s \rangle^{\sigma - 1}  \\
	 +  C  F[h(U)](t,\lambda(t))  + C \int_0^t e^{-\frac{\theta_1}{4}  (t-s)}  F[h(U)](s,\lambda(s)) \ ds,
\end{multline}
where we have used the following results to bound the integral: 
\begin{equation}
\label{inequality M1}  
	\int_0^t e^{-\frac{1}{4} \theta_1 (t-s)} \langle s \rangle^{- \sigma + 1} \ ds \leq C \langle t \rangle^{-  \sigma + 1},
\end{equation}
and
\begin{equation}
\label{inequality M2}
	\int_0^t e^{-\frac{1}{4} \theta_1 (t-s)} e^{-  \frac{\lambda_1}{2} \langle s \rangle^{\gamma} } \ ds \leq C e^{-\nu t^\gamma},
\end{equation}
with $\nu = \min \{ \frac{1}{8} \theta_1, \frac{\lambda_1}{4} \}.$ (See Lemmas \ref{lemma_M1} and \ref{lemma_M2} below for a proof of these two inequalities.)

Next, we multiply both sides of (\ref{F_Delta U aux}) by $\langle t \rangle^{\sigma -1} $ and take the supremum over $t$,
\begin{multline*}
	\sup_{s \leq t} F[\Delta U](s,\lambda(s))  \langle s \rangle^{\sigma -1}   \leq C \sqrt{\varepsilon} \sup_{s \leq t}  e^{- \nu s^\gamma} \langle s \rangle^{\sigma -1}   + 4 C \cdot C_2 \sqrt{\varepsilon}  \sup_{s \leq t} F[\Delta U](s,\lambda(s)) \langle s \rangle^{\sigma - 1}   \\
	 +  C \sup_{s \leq t}  F[h(U)](s,\lambda(s)) \langle s \rangle^{\sigma -1}   + C \sup_{s \leq t} \langle s \rangle^{\sigma -1}   \int_0^s e^{-\frac{\theta_1}{4}  (s-u)} F[h(U)](u,\lambda(u))  \ du.
\end{multline*}
For the first term on the right-hand side, we have $C \sqrt{\varepsilon} \sup_{s \leq t}  e^{-\nu s^\gamma}  \langle s \rangle^{\sigma -1} \leq C \sqrt{\varepsilon}$. And for the last term, we have 
\begin{multline*}
	 \langle s \rangle^{\sigma -1}  \int_0^s e^{-\frac{\theta_1}{4}  (s-u)}  F[h(U)](u,\lambda(u))  \ du   =    \int_0^s e^{-\frac{\theta_1}{4}  (s-u)}  F[h(U)](u,\lambda(u)) \langle s \rangle^{\sigma -1} \frac{\langle u \rangle^{\sigma -1}}{\langle u \rangle^{\sigma -1}} \ du \\
	   \leq \sup_{u \leq s} F[h(U)](u,\lambda(u)) \langle u \rangle^{\sigma -1}  \int_0^s e^{-\frac{\theta_1}{4}  (s-u)}  \frac{\langle s \rangle^{\sigma -1}}{\langle u \rangle^{ \sigma -1}} \ du 	  \leq C \sup_{u \leq s} F[h(U)](u,\lambda(u)) \langle u \rangle^{\sigma -1},
\end{multline*}
where we applied Lemma \ref{lemma_M1}  for the last inequality. Thus, we get
\begin{align*}
	\sup_{s \leq t} F[\Delta U](s,\lambda(s)) \langle s \rangle^{\sigma -1}  & \leq C \sqrt{\varepsilon} + 4 C \cdot C_2 \sqrt{\varepsilon} \sup_{s \leq t} F[\Delta U](s,\lambda(s)) \langle s \rangle^{\sigma - 1}  +  C \sup_{s \leq t}  F[h(U)](s,\lambda(s)) \langle s \rangle^{\sigma -1}.
\end{align*}
Rearranging the terms and using Lemma \ref{property_F_2}, we obtain
\begin{multline*}
	\sup_{s \leq t} F[\Delta U](s,\lambda(s)) \langle s \rangle^{\sigma -1}   \leq \frac{C \sqrt{\varepsilon} }{1 - 4 C \cdot C_2 \sqrt{\varepsilon} } +  \frac{ C }{1 - 4 C \cdot C_2 \sqrt{\varepsilon}}   \sup_{s \leq t}  \widetilde{h} \Big(C F[U](s,\lambda(s)) \Big) \langle s \rangle^{\sigma - 1}  \\
	 \leq C_1 \sqrt{\varepsilon} +  C \sup_{s \leq t}  \widetilde{h} \Big(C F[ \Delta U](s,\lambda(s)) \Big) \langle s \rangle^{\sigma - 1} 
	 \leq C_1 \sqrt{\varepsilon} +  C \widetilde{h} \left(  \sup_{s \leq t} C F[\Delta U](s,\lambda(s)) \langle s \rangle^{\sigma - 1} \right),
\end{multline*}
where we can take the constant $C_1$ large enough such that the inequalities hold. We used the facts that $y\widetilde{h}(x) \leq \widetilde{h}(xy)$ for $y \geq 1$ and  $\widetilde{h}$ is an increasing function on $\br_+$. Using Lemma \ref{Assumptions on the initial datum}, we end up with the same system (\ref{system Y(t)}) as in the linear case. Namely, if we define $\overline{Y}(t) :=  \sup_{s \leq t} C F[\Delta U](s,\lambda(s)) \langle s \rangle^{\sigma -1}$ we have  for every $t \in [0,T]$, 
\begin{eqnarray*}
    \left \{
    \begin{array}{l}
   \overline{Y}(t) \leq  C_1 \sqrt{\varepsilon} + C \widetilde{h} ( \overline{Y}(t)), \\ 
    \overline{Y}(0) \leq C_1 \sqrt{\varepsilon}.
    \end{array}
    \right.
\end{eqnarray*}
Therefore, similarly to (\ref{system Y(t)}), we have for every $t \in [0,T]$
\begin{align*}
	\sup_{s \leq t} F[\Delta U](s,\lambda(s)) \langle s \rangle^{\sigma -1}  \leq  2C_1  \sqrt{\varepsilon}.
\end{align*}
Hence, the result of Proposition \ref{prop_bdd_F_U} follows:
\begin{align*}
	F[\Delta U](t,\lambda(t))   \leq  2 C_1   \sqrt{\varepsilon} \langle t \rangle^{-\sigma + 1},
\end{align*}
for every time $t \in [0,T].$
\end{proof}

To conclude our proof we use the following bootstrap argument given in \cite[Proposition 1.21]{Tao}.
\begin{prop}[Abstract bootstrap principle]
\label{bootstrap}
Let $I$ be a time interval, and for each $t \in I$ suppose we have two statements, a “hypothesis” $\textbf{H}(t)$ and a “conclusion” $\textbf{C}(t)$. Suppose we can verify the following four assertions:
\begin{itemize}
\item[a)] (Hypothesis implies conclusion) If $\textbf{H}(t)$ is true for some time $t \in I$ then $\textbf{C}(t)$ is also true for that time $t$.
\item[b)](Conclusion is stronger than hypothesis) If $\textbf{C}(t)$ is true for some $t \in I$, then $\textbf{H}(t')$ is true for all $t' \in I$ in a neighbourhood of $t$.
\item[c)] (Conclusion is closed) If $t_1, t_2, . . . $ is a sequence of times in $I$ which converges to another time $t \in I$, and $\textbf{C}(t_n)$ is true for all $t_n$, then $\textbf{C}(t)$ is true.
\item[d)](Base case) $\textbf{H}(t)$ is true for at least one time $t \in I$.
\end{itemize}
Then $\textbf{C}(t)$ is true for all $t \in I$.
\end{prop}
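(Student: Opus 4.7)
The plan is to verify that the set
\[
S := \{\, t \in I : \textbf{C}(t) \text{ holds} \,\}
\]
equals all of $I$, using the standard clopen argument on the connected set $I$. I will use each of the four hypotheses (a)--(d) exactly once, showing in turn that $S$ is non-empty, relatively open in $I$, and relatively closed in $I$.

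First I would establish non-emptiness of $S$: by the base case (d), there exists $t_0 \in I$ with $\textbf{H}(t_0)$ true, and then (a) upgrades this to $\textbf{C}(t_0)$, so $t_0 \in S$. Next I would verify relative openness: if $t \in S$ then $\textbf{C}(t)$ holds, and (b) produces a neighbourhood $V$ of $t$ in $I$ on which $\textbf{H}(t')$ holds; applying (a) pointwise on $V$ gives $\textbf{C}(t')$ for all $t' \in V$, so $V \subset S$. Third I would check relative closedness, which is essentially a restatement of (c): if $(t_n) \subset S$ converges to some $t \in I$, then $\textbf{C}(t_n)$ holds for all $n$, whence (c) yields $\textbf{C}(t)$ and $t \in S$.

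Once these three facts are in hand, the conclusion is immediate: a non-empty subset of the connected set $I$ that is simultaneously relatively open and relatively closed must coincide with $I$, so $S = I$ and $\textbf{C}(t)$ holds for all $t \in I$.

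There is really no analytic obstacle here; the one point requiring a little care is the compatibility of the topologies in step two, namely that the neighbourhood furnished by (b) is a neighbourhood in the relative topology of $I$ (which is how (b) is phrased, so this is immediate). In practice the entire difficulty of any bootstrap argument is hidden in checking (a)--(d) for the concrete pair of statements $\textbf{H}(t)$ and $\textbf{C}(t)$ at hand --- as is done, for the present application, via Propositions \ref{prop_bdd_F_U} and the analysis of Section \ref{section bound_g} --- whereas the abstract principle itself is a direct consequence of the connectedness of $I$.
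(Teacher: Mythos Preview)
Your proof is correct and is exactly the standard connectedness argument for the abstract bootstrap principle. The paper itself does not supply a proof of this proposition but simply quotes it from \cite[Proposition~1.21]{Tao}, so there is nothing to compare against; your clopen argument is the canonical one.
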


Using this Proposition \ref{bootstrap}, we have the following result
\begin{prop}
Let $\gamma \in \left( \frac{1}{3}, 1 \right]$,  $\sigma > 3 + \delta$ and let $\lambda_0 < \frac{\lambda_1}{4}$.  Then there exists a constant $C_2 >0$ such that
\begin{equation}
\label{bound G_Delta U}
	G[g(t)] (\lambda(t)) \leq  4 C_2 \varepsilon,
\end{equation}
for every time $t>0$.
\end{prop}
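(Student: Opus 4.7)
The plan is to apply the abstract bootstrap principle of Proposition \ref{bootstrap} on $I = [0, \infty)$, combining the two complementary estimates already established in Sections \ref{section bound_g} and \ref{Section bound_U}, which feed into each other's assumptions. For the bootstrap to close I will choose the conclusion to be a \emph{strictly} stronger quantitative version of the hypothesis. Concretely, I would set
\begin{equation*}
\textbf{H}(t) : \sup_{s \leq t} G[g(s)](\lambda(s)) \leq 4 C_2 \varepsilon, \qquad \textbf{C}(t) : \sup_{s \leq t} G[g(s)](\lambda(s)) \leq 2 C_2 \varepsilon.
\end{equation*}

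Three of the four conditions of Proposition \ref{bootstrap} are essentially immediate. The base case (d) at $t = 0$ follows from the monotonicity of $G$ in its analyticity radius: since $\lambda(0) = 2 \lambda_0 \leq \lambda_1$, one has $G[g(0)](\lambda(0)) \leq G[f^0](\lambda_1) \leq \varepsilon \ll 4 C_2 \varepsilon$. Condition (b), that the conclusion is stronger than the hypothesis and propagates into a time-neighborhood, and condition (c), closedness, both reduce to the continuity of $s \mapsto G[g(s)](\lambda(s))$; I expect this from standard local-in-time persistence of Gevrey regularity for the nonlinear equation \eqref{nlpvpme2}, which I would sketch briefly.

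The substantive step is (a), showing that $\textbf{H}(t) \Rightarrow \textbf{C}(t)$. Starting from $\textbf{H}(t)$, Proposition \ref{prop_bdd_F_U} gives the \emph{improved} bound $F[\Delta U](s, \lambda(s)) \leq 2 C_1 \sqrt{\varepsilon} \langle s \rangle^{-\sigma+1}$ on $[0,t]$ (note the constant is $2C_1$, not $4C_1$). Feeding this halved bound back into the characteristic argument of Section \ref{section bound_g} leads, for every $s \in [0,t]$, to
\begin{equation*}
G[g(s)]^{\frac{1}{2}}(\lambda(s)) \leq G[f^0]^{\frac{1}{2}}(\lambda_1) + C \cdot 2 C_1 \sqrt{\varepsilon} \int_0^s \langle \tau \rangle^{-\sigma + 1} \, d\tau \leq \sqrt{2 C_2 \varepsilon},
\end{equation*}
which is exactly $\textbf{C}(t)$, provided $C_2$ is chosen large enough relative to $C$ and $C_1$ and then $\varepsilon$ is taken correspondingly small. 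The main obstacle is precisely this balancing act: the strict improvement from $4 C_1$ to $2 C_1$ delivered by Proposition \ref{prop_bdd_F_U} must survive the iteration through the characteristic estimate of Section \ref{section bound_g} while remaining compatible with the original choice of $C_2$ made in \eqref{G_epsi_U}. Once this is checked, Proposition \ref{bootstrap} yields $\textbf{C}(t)$, hence a fortiori \eqref{bound G_Delta U}, for every $t \geq 0$.
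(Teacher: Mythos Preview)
Your proposal is correct and uses the same bootstrap machinery as the paper, with only a cosmetic difference in how $\textbf{H}$ and $\textbf{C}$ are set up. The paper takes $\textbf{H}(t)$ to be the bound on $F[\Delta U]$ (with constant $4C_1$) and $\textbf{C}(t)$ to be the bound on $G[g]$ (with constant $4C_2$); assertion (a) then comes directly from \eqref{continuity argument 1}, while (b) uses \eqref{continuity argument 2} --- the strict gain $4C_1 \to 2C_1$ is produced by Proposition \ref{prop_bdd_F_U} and lives in the $F[\Delta U]$ estimate. You instead phrase both $\textbf{H}$ and $\textbf{C}$ as bounds on $\sup_{s\leq t} G[g(s)](\lambda(s))$ and compose the two implications inside step (a), so that the strict improvement appears as $4C_2 \to 2C_2$ and step (b) becomes trivial continuity. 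Your supremum formulation is in fact slightly more precise, since the implications \eqref{continuity argument 1}--\eqref{continuity argument 2} genuinely need the bounds on the whole interval $[0,t]$, not merely at the single time $t$. The ``balancing act'' you flag is harmless once the order of choices is fixed: $C_1$ is determined first (Lemma \ref{Assumptions on the initial datum} and Proposition \ref{prop_bdd_F_U}), then one takes $C_2$ large enough that $(1 + 2CC_1\int_0^\infty \langle\tau\rangle^{-\sigma+1}\,d\tau)^2 \leq 2C_2$, and finally $\varepsilon$ small enough that the absorption step $1 - 4CC_2\sqrt{\varepsilon} \geq \tfrac12$ in the proof of Proposition \ref{prop_bdd_F_U} still delivers the constant $2C_1$.
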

\begin{proof}
We proved on $[0,T]$ that there exist constants $C_1 >0$ and $C_2 >0$ such that
\begin{align}
\label{continuity argument 1}
	F[\Delta U](t,\lambda(t))   \leq  4 C_1   \sqrt{\varepsilon} \langle t \rangle^{-\sigma + 1} \quad \implies \quad G[g(t)] (\lambda(t)) \leq  4 C_2  \varepsilon,
\end{align}
and 
\begin{align}
\label{continuity argument 2}
	 G[g(t)] (\lambda(t)) \leq   4 C_2  \varepsilon \quad \implies \quad F[\Delta U](t,\lambda(t))   \leq  2C_1  \sqrt{\varepsilon} \langle t \rangle^{-\sigma + 1}.
\end{align}
We want to apply the bootstrap argument given in Proposition \ref{bootstrap}. Following the notation of this proposition, let $I = \br_+$, let $\textbf{H}(t)$ be the hypothesis,
\begin{equation*}
	\textbf{H}(t) : F[\Delta U](t,\lambda(t))   \leq  4 C_1   \sqrt{\varepsilon} \langle t \rangle^{-\sigma + 1},
\end{equation*}
and $\textbf{C}(t)$ be the conclusion,
\begin{equation*}
	\textbf{C}(t) : G[g(t)] (\lambda(t)) \leq  4 C_2  \varepsilon.
\end{equation*}
We have to verify the four assertions.  By \eqref{continuity argument 1},  assertion a) is true for $t \in [0,T]$.  By \eqref{continuity argument 2} we know that $F[\Delta U](t,\lambda(t))   \leq  2 C_1   \sqrt{\varepsilon} \langle t \rangle^{-\sigma + 1}$. Therefore, by continuity, there exists a time $t' > t$ such that the following holds,
\begin{equation*}
 	F[\Delta U](t',\lambda(t'))   \leq  4 C_1   \sqrt{\varepsilon} \langle t' \rangle^{-\sigma + 1}.
 \end{equation*} 
Hence, assertion b) is true.  By continuity of $\textbf{C}(t)$, assertion c) is satisfied as well. Finally, for assertion d), we know that $\textbf{H}(0)$ is true thanks to Lemma \ref{Assumptions on the initial datum}.
Therefore, by Proposition \ref{bootstrap}, we have our result.
\end{proof}

Below we prove the two inequalities (\ref{inequality M1}) and (\ref{inequality M2}).
\begin{lem}
\label{lemma_M1}
Let $\sigma > 3$ and $\theta_1 > 0$ then we have for every $t > 0$,
\begin{equation*}
	\int_0^t e^{-\frac{1}{4} \theta_1 (t-s)} \langle s \rangle^{- \sigma + 1} \ ds \leq C \langle t \rangle^{- \sigma + 1}.
\end{equation*}
\end{lem}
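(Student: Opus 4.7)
The plan is to prove the estimate by a standard splitting of the integration interval at the midpoint $t/2$, exploiting the fact that the exponential kernel dominates on the region far from $s=t$ while the polynomial weight is essentially constant on the region near $s=t$.

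More precisely, I would write
\begin{equation*}
\int_0^t e^{-\frac{1}{4}\theta_1 (t-s)} \langle s \rangle^{-\sigma+1}\, ds = \int_0^{t/2} e^{-\frac{1}{4}\theta_1 (t-s)} \langle s \rangle^{-\sigma+1}\, ds + \int_{t/2}^t e^{-\frac{1}{4}\theta_1 (t-s)} \langle s \rangle^{-\sigma+1}\, ds =: I_1 + I_2.
\end{equation*}
For $I_1$, when $s \in [0, t/2]$ we have $t - s \geq t/2$, hence $e^{-\frac{1}{4}\theta_1(t-s)} \leq e^{-\frac{1}{8}\theta_1 t}$. Since $\sigma > 3$ implies $\sigma - 1 > 2$, the weight $\langle s \rangle^{-\sigma+1}$ is integrable on $\mathbb{R}_+$, so $I_1 \leq C e^{-\frac{1}{8}\theta_1 t}$, which is bounded by $C \langle t \rangle^{-\sigma + 1}$ (since exponential decay beats any polynomial decay).

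For $I_2$, when $s \in [t/2, t]$ we have $\langle s \rangle \geq \langle t/2 \rangle \geq c \langle t \rangle$ for some universal $c > 0$, so $\langle s \rangle^{-\sigma+1} \leq C \langle t \rangle^{-\sigma+1}$; pulling this out of the integral and changing variables $u = t - s$ gives
\begin{equation*}
I_2 \leq C \langle t \rangle^{-\sigma+1} \int_0^{t/2} e^{-\frac{1}{4}\theta_1 u}\, du \leq C \langle t \rangle^{-\sigma+1}.
\end{equation*}
Combining the two bounds yields the claim. There is no real obstacle here; the argument is a textbook convolution-type estimate, and the only hypothesis actually used is $\sigma - 1 > 1$ (for the integrability in $I_1$), which is comfortably implied by $\sigma > 3$.
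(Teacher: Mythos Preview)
Your proof is correct and follows essentially the same approach as the paper: split the integral at $t/2$, use the exponential decay on $[0,t/2]$ and the monotonicity of $\langle s\rangle^{-\sigma+1}$ on $[t/2,t]$. The only cosmetic difference is that on $[0,t/2]$ the paper bounds $\langle s\rangle^{-\sigma+1}\leq 1$ and integrates the exponential (getting $\frac{t}{2}e^{-\theta_1 t/8}$), whereas you pull out the exponential and use integrability of the polynomial weight; both variants give the same conclusion.
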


\begin{proof}
We split the integral into two parts and use the fact that $\langle t \rangle^{- \sigma + 1}$ is a decreasing function,
\begin{align*}
	\int_0^t e^{-\frac{1}{4} \theta_1 (t-s)} \langle s \rangle^{- \sigma + 1} \ ds  & =  \int_0^{\frac{t}{2}} e^{-\frac{1}{4} \theta_1 (t-s)} \langle s \rangle^{- \sigma + 1} \ ds + \int_{\frac{t}{2}}^t e^{-\frac{1}{4} \theta_1 (t-s)} \langle s \rangle^{- \sigma + 1} \ ds \\	
	& \leq   \int_0^{\frac{t}{2}} e^{-\frac{1}{4} \theta_1 (t-s)} \ ds +  \left\langle \frac{t}{2} \right\rangle^{- \sigma + 1}  \int_{\frac{t}{2}}^t e^{-\frac{1}{4} \theta_1 (t-s)} \ ds \\
	&  \leq \frac{t}{2} e^{-\frac{1}{4} \theta_1 \frac{t}{2}} +  \left\langle \frac{t}{2} \right\rangle^{- \sigma + 1}  \int_0^{\frac{t}{2}} e^{-\frac{1}{4} \theta_1 \tau} \ d\tau \\
	&  \leq C  \langle t \rangle^{- \sigma + 1} + C  \langle t \rangle^{- \sigma + 1}  \leq 2 C \langle t \rangle^{- \sigma + 1}.
\end{align*}
\end{proof}

\begin{lem}
\label{lemma_M2}
Let $\lambda_1 > 0$ and $\theta_1 > 0$ then we have for every $t > 0$,
\begin{equation*}
	\int_0^t e^{-\frac{1}{4} \theta_1 (t-s)} e^{- \frac{\lambda_1}{2} \langle s \rangle^\gamma } \ ds \leq C e^{-\nu t^\gamma},
\end{equation*}
where $\nu = \min \{ \frac{1}{8} \theta_1, \frac{\lambda_1}{4} \}.$
\end{lem}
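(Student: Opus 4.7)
The plan is to mimic the splitting argument used in Lemma \ref{lemma_M1} but with the polynomial weight replaced by the stretched exponential $e^{-\frac{\lambda_1}{2}\langle s\rangle^\gamma}$. For $t$ bounded (say $t\le 1$) the integral is trivially $\le C$ and the right-hand side $Ce^{-\nu t^\gamma}$ is bounded below by a positive constant, so the estimate is immediate. The interesting regime is $t>1$, where I would write
\begin{equation*}
\int_0^t e^{-\frac{1}{4}\theta_1(t-s)}\,e^{-\frac{\lambda_1}{2}\langle s\rangle^\gamma}\,ds
= \int_0^{t/2} (\cdots)\,ds + \int_{t/2}^t (\cdots)\,ds =: I_1+I_2.
\end{equation*}

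For $I_1$, I would bound the transport kernel by putting all the decay on the exponential in $t-s$: since $t-s\ge t/2$ on this range, $e^{-\frac{1}{4}\theta_1(t-s)}\le e^{-\frac{\theta_1}{8}t}\le e^{-\frac{\theta_1}{8}t^\gamma}$ because $\gamma\in(0,1]$ and $t\ge 1$. The remaining factor $e^{-\frac{\lambda_1}{2}\langle s\rangle^\gamma}$ is integrable on $[0,\infty)$ (its integral is a finite constant depending only on $\gamma$ and $\lambda_1$), giving $I_1\le C e^{-\frac{\theta_1}{8}t^\gamma}$.

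For $I_2$, I would do the opposite: keep all the decay on the Gevrey weight. Since $s\ge t/2$ and the map $s\mapsto\langle s\rangle^\gamma$ is increasing, $e^{-\frac{\lambda_1}{2}\langle s\rangle^\gamma}\le e^{-\frac{\lambda_1}{2}(t/2)^\gamma}\le e^{-\frac{\lambda_1}{4}t^\gamma}$, using $(t/2)^\gamma = 2^{-\gamma}t^\gamma \ge \tfrac{1}{2}t^\gamma$ since $\gamma\le 1$. The remaining $\int_{t/2}^t e^{-\frac{1}{4}\theta_1(t-s)}\,ds\le 4/\theta_1$, yielding $I_2\le C e^{-\frac{\lambda_1}{4}t^\gamma}$.

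Summing $I_1$ and $I_2$ and recalling $\nu=\min\{\tfrac{1}{8}\theta_1,\tfrac{\lambda_1}{4}\}$ gives the desired bound. No step looks like a genuine obstacle; the only minor subtlety is justifying $e^{-\frac{\theta_1}{8}t}\le e^{-\frac{\theta_1}{8}t^\gamma}$, which is where the restriction $t\ge 1$ (together with $\gamma\le 1$) is used, and absorbing the $t\le 1$ case into the constant $C$.
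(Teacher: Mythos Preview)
Your proof is correct and follows essentially the same splitting argument as the paper's own proof: both split the integral at $t/2$, extract the kernel decay $e^{-\frac{\theta_1}{8}t}$ on $[0,t/2]$ and the Gevrey decay on $[t/2,t]$, and then combine. The only cosmetic differences are that you treat $t\le 1$ separately and, in $I_1$, integrate the Gevrey weight over $[0,\infty)$ rather than bounding it by $1$ and integrating the exponential kernel directly; neither changes the substance.
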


\begin{proof}
First, we split the integral into two parts,
\begin{align*}
	\int_0^t e^{-\frac{1}{4} \theta_1 (t-s)} e^{- \frac{\lambda_1}{2} \langle s \rangle^\gamma } \ ds & =  \int_0^{\frac{t}{2}} e^{-\frac{1}{4} \theta_1 (t-s)} e^{- \frac{\lambda_1}{2} \langle s \rangle^\gamma } \ ds + \int_{\frac{t}{2}}^t e^{-\frac{1}{4} \theta_1 (t-s)} e^{- \frac{\lambda_1}{2} \langle s \rangle^\gamma } \ ds.
\end{align*}
We bound the first term as follows:
\begin{align*}
	\int_0^{\frac{t}{2}} e^{-\frac{1}{4} \theta_1 (t-s)} e^{- \frac{\lambda_1}{2} \langle s \rangle^\gamma } \ ds   & \leq \int_0^{\frac{t}{2}} e^{-\frac{1}{4} \theta_1 (t-s)} \ ds  =  \int_{\frac{t}{2}}^t e^{-\frac{1}{4} \theta_1 \tau} \ d\tau = \left[ -\frac{4}{\theta_1} e^{-\frac{1}{4} \theta_1 \tau} \right]_{\frac{t}{2}}^t \\ 
	& = \frac{4}{\theta_1} e^{-\frac{1}{4} \theta_1 \frac{t}{2}} -\frac{4}{\theta_1} e^{-\frac{1}{4} \theta_1 t} 	 \leq C e^{-\frac{1}{4} \theta_1 \frac{t}{2}}.
\end{align*}
And for the second term we have 
\begin{align*}
	\int_{\frac{t}{2}}^t e^{-\frac{1}{4} \theta_1 (t-s)} e^{- \frac{\lambda_1}{2} \langle s \rangle^\gamma } \ ds & \leq e^{- \frac{\lambda_1}{2} \langle \frac{t}{2} \rangle^\gamma} \int_{\frac{t}{2}}^t e^{-\frac{1}{4} \theta_1 (t-s)} \ ds \leq C e^{- \frac{\lambda_1}{2} \langle \frac{t}{2} \rangle^\gamma}.
\end{align*}
Therefore, we get
\begin{align*}
	\int_0^t e^{-\frac{1}{4} \theta_1 (t-s)} e^{- \frac{\lambda_1}{2} \langle s \rangle^\gamma } \ ds & \leq C e^{-\frac{1}{4} \theta_1 \frac{t}{2}} + C e^{- \frac{\lambda_1}{2} \langle \frac{t}{2} \rangle^\gamma} \leq C e^{-\nu t^\gamma},
\end{align*}
with $\nu = \min \{ \frac{1}{8} \theta_1, \frac{\lambda_1}{4} \}.$

\end{proof}

\subsection{Bound on \texorpdfstring{$F[\rho](t,z)$}{Lg} and asymptotic behaviour}
\label{section bound_rho}

In this section we explain how to get a bound on the generator function of $\rho$ and we show the scattering (\ref{scaterring}). 

Since (\ref{bound G_Delta U}) holds for all times $t>0$, we can use Lemma \ref{lemma_F<G}, to bound the generator function of $\rho$ with the one of $g$. That is 
 \begin{equation*}
 	F[\rho] (t,\lambda(t)) \leq  C G[g(t)]^{\frac{1}{2}} (\lambda(t)) \leq C \sqrt{\varepsilon}.
 \end{equation*}
By repeating the same analysis as in Section \ref{Asymptotic behaviour}, we obtain the exponential decay of the density $\rho$ and of the electric field $E$ in every $C^k(\bt^3)$ norm, with $k \in \bn$.

Now, we show the scattering estimate (\ref{scaterring}). Integrating (\ref{nlequation_g}) in time, we have
\begin{equation}
\label{g integral}
	 g(t,x,v) = f^0(x,v) - \int_0^t E(s,x+vs) \cdot \nabla_v \mu (v) \ ds - \int_0^t  E(s,x+vs) \cdot (\nabla_v g - s \nabla_x g) \ ds.
\end{equation}
We can show that the two integrals are absolutely bounded in the $G[\cdot](z)$ norm for $ z \leq \frac{\lambda_0}{2}$ if $\gamma \in ( \frac{1}{3}, 1)$ and $z \leq \min \{ \frac{\theta_0}{2}, \frac{\lambda_0}{2} \}$ if $\gamma = 1$. Indeed,
\begin{align*}
	I_1 &:= \int_0^\infty G[E(s,x+vs) \cdot \nabla_v \mu (v)] (z) \ ds \\
	 & = \int_0^\infty \sum_{\vert j \vert \leq 3} \sum_{k \in \mathbb{Z}^3} \int_{\mathbb{R}^3} e^{2 z \langle k, \eta  \rangle^{\gamma} } \vert \partial_{\eta}^j \left( \reallywidehat{E(s,x+vs) \cdot \nabla_v \mu (v)}_{k,\eta} (s) \right) \vert^2 \langle k, \eta \rangle ^{2 \sigma} \ d\eta \ ds.
\end{align*}
By recalling the two equalities (\ref{E_mu_big_hat}) and (\ref{E_mu}), we obtain
\begin{align*}	 
	 I_1  & = \int_0^\infty \sum_{\vert j \vert \leq 3} \sum_{k \in \mathbb{Z}^3} \int_{\mathbb{R}^3} e^{2 z \langle k, \eta  \rangle^{\gamma} } \left\vert \partial_{\eta}^j \left(\widehat{E}_k (s) \cdot  ik(t-s) \widehat{ \mu } (\eta - ks) \right) \right\vert^2 \langle k, \eta \rangle ^{2 \sigma} \ d\eta \ ds \\
	  & = \int_0^\infty \sum_{\vert j \vert \leq 3} \sum_{k \in \mathbb{Z}^3}  \int_{\mathbb{R}^3} e^{2 z \langle k, \eta  \rangle^{\gamma} }  \vert \widehat{E}_k (s) \vert^2\vert \partial_{\eta}^j \left( i(\eta -ks) \widehat{\mu } (\eta - ks) \right) \vert^2 \langle k, \eta \rangle ^{2 \sigma} \ d\eta \ ds \\
	  & = \int_0^\infty  \sum_{k \in \mathbb{Z}^3}  \int_{\mathbb{R}^3} e^{2 z \langle k, \eta  \rangle^{\gamma} }  \vert \widehat{E}_k (s) \vert^2\vert  \widehat{\mu } (\eta - ks) \vert^2 \langle k, \eta \rangle ^{2 \sigma} \ d\eta \ ds \\
	 & \qquad + \int_0^\infty \sum_{\vert j \vert \leq 3} \sum_{k \in \mathbb{Z}^3}  \int_{\mathbb{R}^3} e^{2 z \langle k, \eta  \rangle^{\gamma} }  \vert \widehat{E}_k (s) \vert^2 \vert  \eta -ks \vert^2 \vert \partial_{\eta}^j \widehat{\mu } (\eta - ks) \vert^2 \langle k, \eta \rangle^{2 \sigma} \ d\eta \ ds  =: I_{11} + I_{12}.
\end{align*}
We first give a bound on $I_{11}$.  By triangle inequalities, we have
\begin{align*}
	e^{2 z \langle k, \eta  \rangle^{\gamma} } \leq e^{2 z \langle k, ks  \rangle^{\gamma} } e^{2 z \langle 0, \eta - ks  \rangle^{\gamma} },
\end{align*}
and for some constant $C$
\begin{align*}
	\langle k, \eta \rangle ^{2 \sigma} \leq C \langle k,  ks \rangle ^{2 \sigma} \langle 0, \eta - ks \rangle ^{2 \sigma}.
\end{align*}
Therefore, using the two latter inequalities,
\begin{align*}
	I_{11} & \leq C  \int_0^\infty  \sum_{k \in \mathbb{Z}^3}  \int_{\mathbb{R}^3} e^{2 z \langle k, ks  \rangle^{\gamma} }  \vert \widehat{E}_k (s) \vert^2  \langle k,  ks \rangle ^{2 \sigma}  e^{2 z \langle 0, \eta - ks  \rangle^{\gamma} }  \vert  \widehat{\mu } (\eta - ks) \vert^2 \langle 0, \eta - ks \rangle ^{2 \sigma} \ d\eta \ ds \\
	& \leq C \int_0^\infty  \sum_{k \in \mathbb{Z}^3} e^{2 (z - \lambda (s)) \langle k, ks  \rangle^{\gamma} }  e^{2 \lambda (s) \langle k, ks  \rangle^{\gamma} }  \vert \widehat{E}_k (s) \vert^2  \langle k,  ks \rangle ^{2 \sigma} \int_{\mathbb{R}^3} e^{2 z \langle \eta - ks  \rangle^{\gamma} }  \vert  \widehat{\mu } (\eta - ks) \vert^2 \langle \eta - ks \rangle ^{2 \sigma} \ d\eta \ ds.
\end{align*}
We can bound the integral in $\eta$ using (\ref{Pmu1}) if $\gamma \in (\frac{1}{3}, 1)$ and if $\gamma = 1$ we need the additional assumption that $z \leq \frac{\theta_0}{2}$.  This give us
\begin{align*}
	I_{11} & \leq C \int_0^\infty  \sum_{k \in \mathbb{Z}^3} e^{2 (z - \lambda (s)) \langle k, ks  \rangle^{\gamma} }  e^{2 \lambda (s) \langle k, ks  \rangle^{\gamma} }  \vert \widehat{E}_k (s) \vert^2  \langle k,  ks \rangle ^{2 \sigma} \int_{\mathbb{R}^3} e^{2 z \langle \eta - ks  \rangle^{\gamma} }    e^{-2 \theta_0  \vert \eta - ks \vert} \langle \eta - ks \rangle ^{2 \sigma} \ d\eta \ ds \\
	& \leq  C \int_0^\infty e^{2 (z - \lambda (s)) \langle s  \rangle^{\gamma} }  \sum_{k \in \mathbb{Z}^3}   e^{2 \lambda (s) \langle k, ks  \rangle^{\gamma} }  \vert \widehat{E}_k (s) \vert^2  \langle k,  ks \rangle ^{2 \sigma} \ ds,
\end{align*}
where we used $\langle k, ks  \rangle^{\gamma}  \geq \langle s  \rangle^{\gamma} $ and $(z - \lambda(s))$ is always negative since $z \leq \frac{\lambda_0}{2} < \lambda (s)$. Therefore,  with $\vert \widehat{E}_{k} (s) \vert = \frac{1}{\vert k \vert} \vert k \vert^2 \vert \widehat{U}_k (s) \vert$ and (\ref{continuity argument 2}), we have
\begin{align*}
	I_{11} & \leq  C \int_0^\infty e^{2 (z - \lambda (s)) \langle s  \rangle^{\gamma} }  \left( \sup_{k \in \mathbb{Z}^3}   e^{ \lambda (s) \langle k, ks  \rangle^{\gamma} } \vert k \vert^2 \vert  \widehat{U}_k (s) \vert  \langle k,  ks \rangle ^{ \sigma} \frac{1}{\vert k \vert^\alpha} \right)^2 \sum_{k \in \mathbb{Z}^3}  \vert k \vert^{2 \alpha - 2}  \ ds \\
	& \leq  C \varepsilon  \int_0^\infty e^{2 (z - \lambda_0) \langle s  \rangle^{\gamma} }   \langle s \rangle^{-2 \sigma +2}   \ ds \leq C \varepsilon,
\end{align*}
provided that $\alpha < \frac{1}{2}$ and $z \leq \frac{\lambda_0}{2} < \lambda (s)$. Similarly, we can show that $I_{12}$ is bounded in the same way.

For the second integral, first recall that 
\begin{align*}
	\reallywidehat{E(s,x+vs) (\nabla_v g - s \nabla_x g)} (k, \eta)= i \sum_{\ell \in \mathbb{Z}^3} (\eta - ks) \cdot  \widehat{E}_{\ell} (s) \widehat{g}_{k - \ell ,\eta  - \ell s} (s).
\end{align*}
Thus, for $z \leq \frac{\lambda_0}{2}$, we get
\begin{align*}
	I_2 :&= \int_0^\infty G[E(s,x+vs) \cdot (\nabla_v g - s \nabla_x g)] (z) \ ds \\
	 & = \int_0^\infty \sum_{\vert j \vert \leq 3} \sum_{k \in \mathbb{Z}^3} \int_{\mathbb{R}^3} e^{2z\langle k, \eta  \rangle^{\gamma} }  \Bigg| \partial_{\eta}^j \Bigg(i \sum_{\ell \in \mathbb{Z}^3} (\eta - ks) \cdot \widehat{E}_{\ell} (s) \widehat{g}_{k - \ell ,\eta  - \ell s} (s) \Bigg)  \Bigg|^2 \langle k, \eta \rangle^{2 \sigma} \ d\eta \ ds \\
	  & = \int_0^\infty   \sum_{k \in \mathbb{Z}^3} \int_{\mathbb{R}^3} e^{2z\langle k, \eta  \rangle^{\gamma} }  \left| \sum_{\ell \in \mathbb{Z}^3}  \widehat{E}_{\ell} (s) \widehat{g}_{k - \ell ,\eta  - \ell s} (s)  \right|^2 \langle k, \eta \rangle ^{2 \sigma} \ d\eta \ ds \\
	   & \qquad + \int_0^\infty    \sum_{\vert j \vert \leq 3} \sum_{k \in \mathbb{Z}^3} \int_{\mathbb{R}^3} e^{2z\langle k, \eta  \rangle^{\gamma} } \vert \eta - ks \vert^2  \left|  \sum_{\ell \in \mathbb{Z}^3}  \widehat{E}_{\ell} (s) \partial_{\eta}^j \widehat{g}_{k - \ell ,\eta  - \ell s} (s)  \right|^2 \langle k, \eta \rangle ^{2 \sigma} \ d\eta \ ds =: I_{21} + I_{22}.
\end{align*}
To show that $I_{21}$ is bounded, we use the algebra property given by inequality (3.14) in \cite[Lemma 3.3]{BMM_13}:
\begin{align*}
	I_{21} & = \int_0^\infty   \sum_{k \in \mathbb{Z}^3} \int_{\mathbb{R}^3}   \left| \sum_{\ell \in \mathbb{Z}^3}  e^{z \langle k, \eta  \rangle^{\gamma} }  \langle k, \eta \rangle^{ \sigma}  \widehat{E}_{\ell} (s) \widehat{g}_{k - \ell ,\eta  - \ell s} (s)  \right|^2  \ d\eta \ ds \\
	& \leq \int_0^\infty \Bigg( \sum_{\ell \in \mathbb{Z}^3}  e^{2 z \langle \ell, \ell s  \rangle^{\gamma} }  \langle \ell, \ell s \rangle^{2 \sigma}  \vert \widehat{E}_{\ell} (s) \vert^2 \Bigg)  \sum_{k \in \mathbb{Z}^3} \int_{\mathbb{R}^3} e^{2 z \langle k, \eta  \rangle^{\gamma} } \langle k, \eta  \rangle^{2 \sigma}   \left|  \widehat{g}_{k ,\eta } (s)  \right|^2  \ d\eta \ ds.
\end{align*}
Then, we use that $z \leq \frac{\lambda_0}{2} < \lambda(s)$ for all $s$ and the fact that the generator function of $g$ is increasing in the $z$ variable. Therefore,
\begin{align*}
	I_{21} & \leq \int_0^\infty    \left( \sum_{\ell \in \mathbb{Z}^3} e^{2( z - \lambda(s))  \langle \ell, \ell s  \rangle^{\gamma} }   e^{2 \lambda(s) \langle \ell, \ell s  \rangle^{\gamma} }  \langle \ell, \ell s \rangle^{2 \sigma}  \vert \widehat{E}_{\ell} (s) \vert^2 \right) G[g(s)] (\lambda(s)) \ ds \\
	& \leq \int_0^\infty   \left( \sup_{\ell \in \mathbb{Z}^3} e^{ \lambda(s) \langle \ell, \ell s  \rangle^{\gamma} }  \langle \ell, \ell s \rangle^{ \sigma} \vert \ell \vert^2 \vert \widehat{U}_l (s) \vert \frac{1}{\vert \ell \vert^{\alpha}} \right)^2  \left( \sum_{\ell \in \mathbb{Z}^3} e^{2( z - \lambda(s)) \langle \ell, \ell s  \rangle^{\gamma} } \vert \ell \vert^{2 \alpha - 2}  \right) G[g(s)] ( \lambda(s)) \ ds,
\end{align*}
where we have used $\vert \widehat{E}_{\ell} (s) \vert = \frac{1}{\vert \ell \vert} \vert \ell \vert^2 \vert  \widehat{U}_\ell (s) \vert$. Then using (\ref{bound G_Delta U}) and (\ref{continuity argument 2}), we obtain
\begin{align*}
	I_{21} & \leq C \varepsilon^2 \int_0^\infty  \langle s \rangle^{- 2 \sigma + 2}  e^{2( z - \lambda(s)) \langle s  \rangle^{\gamma} }  \left( \sum_{\ell \in \mathbb{Z}^3}  \vert \ell \vert^{2 \alpha - 2}  \right)  \ ds  \leq C \varepsilon^2 \int_0^\infty  \langle s \rangle^{- 2 \sigma + 2}   e^{2( z - \lambda_0) \langle s  \rangle^{\gamma} }   \ ds \leq C \varepsilon^2,
\end{align*}
provided that $\alpha < \frac{1}{2}$ and $z \leq \frac{\lambda_0}{2} < \lambda (s)$. Similarly, we can show that $I_{22}$ is bounded in the same way.

Now, that we have shown that the two integrals are absolutely bounded, we can define  the limit $f_\infty$ as
\begin{align}
\label{f_infinity}
	f_\infty(x,v) := f^0(x,v) - \int_0^\infty E(s,x+vs) \cdot \nabla_v \mu (v) \ ds - \int_0^\infty  E(s,x+vs) \cdot  (\nabla_v g - s \nabla_x g) \ ds.
\end{align}
Since  $f(t,x+vt,v) = g(t,x,v)$, we have combining, (\ref{g integral}) and (\ref{f_infinity}), 
\begin{align*}
	G[f(t,x+vt, v) - f_\infty(x,v)](z) &  \leq \int_t^\infty G[E(s,x+vs) \cdot \nabla_v \mu (v)] (z) \ ds \\
	 & \qquad + \int_t^\infty G[E(s,x+vs) \cdot (\nabla_v g - s \nabla_x g)] (z) \ ds \\
	 & \leq C \varepsilon  \int_t^\infty e^{2 (z - \lambda_0) \langle s  \rangle^{\gamma} }   \langle s \rangle^{-2 \sigma +2}   \ ds + C \varepsilon^2 \int_t^\infty  \langle s \rangle^{- 2 \sigma + 2}   e^{2( z - \lambda_0) \langle s  \rangle^{\gamma} }   \ ds \\
	 & \leq C \varepsilon e^{2 (z - \lambda_0) \langle t  \rangle^{\gamma} } + C \varepsilon^2 e^{2( z - \lambda_0) \langle t  \rangle^{\gamma} } \leq C \varepsilon e^{- 2 (\lambda_0 - z) \langle t  \rangle^{\gamma} }.
\end{align*}
From this last inequality, we can deduce that $f(t,x,v)$ converges weakly to the spatial average of $f_\infty$. Indeed, by (\ref{scaterring}), we deduce 
\begin{align*}
	\vert \widehat{f}_{k,\eta - kt} (t) -  \widehat{f_{\infty}} (k,\eta) \vert  \xrightarrow[t \rightarrow \infty]{} 0,
\end{align*}
or equivalently,
\begin{align}
\label{decay_fourier_transform_f}
	\vert \widehat{f}_{k,\eta} (t) -  \widehat{f_{\infty}} (k,\eta + kt) \vert  \xrightarrow[t \rightarrow \infty]{} 0.
\end{align}
Let $\varphi \in C_c^0 (\bt^d \times \br^3)$ be a given test function.  And let us write the $L^2 (\bt^d \times \br^d)$ scalar product as
\begin{equation*}
	\langle \varphi (x,v),  f(t,x,v) \rangle_{L^2} :=  \int_{\bt^3 \times \br^3} \varphi (x,v) f(t,x,v) \ dx  \ dv.
\end{equation*}
Therefore, using Parseval's identity and Plancherel theorem, we have
\begin{align*}
	\langle \varphi (x,v),  f(t,x,v) \rangle_{L^2}  &= \langle  \widehat{\varphi }_{k,\eta}, \widehat{f}_{k,\eta}(t) \rangle_{L^2} = \langle  \widehat{\varphi }_{k,\eta} , \big(\widehat{f}_{k,\eta} (t) -  \widehat{f_{\infty}} (k,\eta + kt) \big) \rangle_{L^2}  + \langle  \widehat{\varphi }_{k,\eta} , \widehat{f_{\infty}} (k,\eta + kt) \rangle_{L^2}.
\end{align*}
The first term on the right-hand side goes to zero thanks to (\ref{decay_fourier_transform_f}), and we can treat the second term on the right-hand side as in the proof of Proposition \ref{prop weak convergence}. Therefore,  we obtain the weak convergence in $L^2 (\bt^d \times \br^d)$
\begin{equation*}
	f(t,x,v) \xrightharpoonup[t \rightarrow \infty]{} \langle f_\infty (v) \rangle_x = \int_{\mathbb{T}^3} f_\infty(x,v) \ dx.
\end{equation*}

\textbf{Acknowledgements:} The authors would like to warmly thank the anonymous referee for the constructive review and the useful comments that improved the presentation of the results.

\bibliographystyle{plain}
\bibliography{biblio}

\end{document}